\documentclass{amsart}

\usepackage[dvips]{graphicx}
\usepackage{xcolor}

\theoremstyle{plain}
\newtheorem{theorem}{Theorem}[section]
\newtheorem*{theorem*}{Theorem}
\newtheorem{lemma}[theorem]{Lemma}

\theoremstyle{definition}
\newtheorem{definition}[theorem]{Definition}

\theoremstyle{remark}
\newtheorem{remark}[theorem]{Remark}
\newtheorem*{acknowledgements}{Acknowledgements}

\newcommand{\del}{\partial}
\newcommand{\R}{\mathbb{R}}
\newcommand{\N}{\mathbb{N}}
\renewcommand{\H}{\mathbb{H}}
\newcommand{\C}{\mathbb{C}}

\newcommand{\E}{\mathbb{E}}

\newcommand{\lift}{\widetilde}
\DeclareMathOperator{\instar}{instar}
\DeclareMathOperator{\arcsinh}{arcsinh}
\DeclareMathOperator{\arctanh}{arctanh}

\begin{document}

\title{Bounds on Pachner moves and systoles of cusped 3-manifolds}

\author[Kalelkar]{Tejas Kalelkar}
\address{Mathematics Department, Indian Institute of Science Education and Research, Pune 411008, India}
\email{tejas@iiserpune.ac.in}

\author[Raghunath]{Sriram Raghunath}
\address{Mathematics Department, Indian Institute of Science Education and Research, Pune 411008, India}
\email{sriram.raghunath@students.iiserpune.ac.in}

\date{\today}

\keywords{Hauptvermutung, ideal triangulations, hyperbolic knots, Pachner moves, systole length}

\subjclass[2010]{Primary 57M25, 57Q25}

\begin{abstract}
Any two geometric ideal triangulations of a cusped complete hyperbolic $3$-manifold $M$ are related by a sequence of Pachner moves through topological triangulations. We give a bound on the length of this sequence in terms of the total number of tetrahedra and a lower bound on dihedral angles. This leads to a naive but effective algorithm to check if two hyperbolic knots are equivalent, given geometric ideal triangulations of their complements. Given a geometric ideal triangulation of $M$, we also give a lower bound on the systole length of $M$ in terms of the number of tetrahedra and a lower bound on dihedral angles.
\end{abstract}

\maketitle

\section{Introduction}
A basic question in knot theory is to determine when two given knots or links are equivalent. There are several algorithms to solve this equivalence problem but the complexity class of this problem is still not known. 

Haken\cite{Hak} gave an algorithm for non-fibered knots in the sixties using a hierarchy of normal surfaces that gives a canonical cell structure on the knot complement. Equivalence then follows from a result by Gordon and Luecke\cite{GorLue} which shows that a knot is determined by the homeomorphism class of its complement (up to mirror images). Haken's algorithm was extended to fibered knots using a solution of the conjugacy class problem for mapping class groups by Hemion\cite{Hem}. A complete rigorous treatment was recently given by Matveev\cite{Mat}. We are not aware of any explicit estimations of the complexity of Haken-Hemion-Matveev's algorithm.

Thurston has classified nontrivial knots in $S^3$ into torus, satellite and hyperbolic knots. Hyperbolic knots are those whose complements in $S^3$ are complete orientable one-cusped hyperbolic $3$-manifolds. Generically, knots with small crossing numbers\cite{HosThiWee}, alternating diagrams\cite{Men} or highly twisted diagrams\cite{FutPur} are hyperbolic. Given a triangulation of a hyperbolic knot complement, we can calculate a presentation of its fundamental group, which is Kleinian. Sela\cite{Sel} has given an algorithm to solve the isomorphism problem for Kleinian groups. By the rigidity result of Mostow-Prasad\cite{Mos}\cite{Pra}, the fundamental group of a complete hyperbolic $3$-manifold determines the manifold up to isometry. Combining these results gives an algorithm for equivalence of hyperbolic knots. Sela's algorithm though is of an existential nature and this procedure does not lead to a practical algorithm with explicit computation bounds.

An algorithm given by Casson-Manning-Weeks\cite{Man, Wee} involves computing a canonical ideal polyhedral decomposition of the hyperbolic knot complement, called the Epstein-Penner decomposition\cite{EpsPen}. Commonly used software to study hyperbolic manifolds like SnapPea attempt to implement their algorithm to compute this decomposition. In practice, this seems to be the most efficient way to recognise hyperbolic knots. However, unlike the Casson-Manning algorithm SnapPea is not guaranteed to always find the Epstein-Penner decomposition. There may also creep in floating point errors which can lead to equivalent manifolds being considered distinct\cite{Bur}.

More recently, Kuperberg\cite{Kup} has given a complete proof of the folklore result from the 1970s that the homeomorphism problem for closed oriented $3$-manifolds is a corollary of geometrization. An algorithm is said to be elementary recursive if its execution time is bounded by a bounded tower of exponentials. In his paper it is shown that the complexity class of the oriented homeomorphism problem for closed oriented triangulated $3$-manifolds is elementary recursive. Furthermore, for cusped  hyperbolic $3$-manifolds it is shown (Theorem 8.3 of \cite{Kup}) that it is elementary recursive to find a geometric triangulation and specify its geometric data with algebraic numbers. And consequently, the isomorphism problem for cusped hyperbolic $3$-manifolds is also elementary recursive. In this article, we assume that we are already given geometric ideal triangulations of two cusped hyperbolic $3$-manifolds along with its geometric data (lower bound on dihedral angles), and we then proceed to give a doubly exponential time algorithm for the isomorphism problem.

Algorithms with explicit computation bounds have been calculated using either Reidemeister or Pachner moves. Reidemeister moves are local changes to the diagram of the knot while bistellar or Pachner moves are local changes to a triangulation of the knot complement. There are only 3 pairs of Reidemeister and 2 pairs of Pachner moves, so an explicit bound on the number of moves needed to relate knot diagrams or triangulations of knot complements leads to an algorithm to solve the knot equivalence problem with explicit running time bounds. Coward and Lackenby\cite{CowLac} have given such a bound for Reidemeister moves while Mijatovic\cite{Mij4} has given such a bound for Pachner moves. These bounds though are huge. The bound on Reidemeister moves is a tower of exponentials of  height $10^{1000000m}$, where $m$ is the crossing number of the diagram. The bound on Pachner moves is a tower of exponentials of height $2^{200m}$, where $m$ is the number of tetrahedra in the triangulation of the knot complement.

It is conjectured that hyperbolic knot complements always have geometric ideal triangulations. Ham and Purcell\cite{HamPur} have recently proved this for sufficiently highly twisted knots while Luo, Schleimer and Tillmann\cite{LuoSchTil} have proved that such triangulations always exist virtually.  Geometric triangulations of a cusped $3$-manifold $M$ may not be unique, for example the complement of the Figure Eight in $S^3$ is a complete orientable one-cusped hyperbolic $3$-manifold with infinitely many geometric ideal triangulations \cite{DadDua}. Any two topological ideal  triangulations of $M$ are related by a sequence of Pachner moves through topological ideal triangulations\cite{Ame}. It is remarked in \cite{DadDua} that the Figure Eight knot complement has geometric ideal triangulations which can not be related by Pachner moves through geometric ideal triangulations.

It is natural then to ask if there are better bounds for the knot equivalence problem given geometric ideal triangulations instead of topological triangulations of the knot complement. The aim of this article is obtain a substantially lower explicit bound on the number of Pachner moves needed to relate geometric ideal triangulations of cusped hyperbolic manifolds, when the intermediate triangulations are allowed to be topological (not geometric) and have material (non-ideal) vertices. 

\begin{theorem}\label{mainthm1}
Let $M$ be a complete orientable cusped hyperbolic $3$-manifold. Let $\tau_1$ and $\tau_2$ be geometric ideal triangulations of $M$ with at most $m_1$ and $m_2$ many tetrahedra respectively and all dihedral angles at least $\theta_0$. Let $m=m_1+ m_2$. Then the number of Pachner moves needed to relate $\tau_1$ and $\tau_2$ is less than
$$ (2.8\times 10^{12})\cdot \frac{ m^{11/2}}{(\sin \theta_0)^{12m+27/2}} $$
\end{theorem}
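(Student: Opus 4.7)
My plan is the standard ``common subdivision'' route. Construct a triangulation $\sigma$ that geometrically refines both $\tau_1$ and $\tau_2$, by intersecting the two inside $M$ (equivalently, intersecting their lifts in $\mathbb{H}^3$) and simplicially subdividing the resulting convex cells. Then bound separately (a) the combinatorial size of $\sigma$ in terms of $m$ and $\theta_0$, and (b) the number of Pachner moves needed to go from each $\tau_i$ to $\sigma$. The dihedral angle hypothesis enters as a non-degeneracy control: tetrahedra of $\tau_1$ and $\tau_2$ are uniformly ``fat'', and their $2$-skeleta cannot intersect too wildly.

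\textbf{Complexity of $\sigma$.} After truncating each ideal vertex at a standard horoball, a fixed tetrahedron $T\in\tau_1$ has diameter, inradius and face area controlled explicitly in terms of $\sin\theta_0$. I would bound the number of lifts of $\tau_2$-tetrahedra that meet $T$ by volume comparison in $\mathbb{H}^3$: each such lift has volume bounded below, all of them sit inside a neighborhood of $T$ whose radius is controlled by the diameter of $T$, and a pair of distinct lifts are separated by at least the systole of $M$. The systole estimate proved elsewhere in the paper provides this separation as an explicit function of $m$ and $\theta_0$. Each intersection of a $\tau_2$-lift with $T$ is a convex polyhedron with bounded face count, and a standard triangulation of it adds only a controlled number of simplices. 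Summing over $T$ yields $|\sigma| \le F(m,\sin\theta_0)$ of the rough form appearing in the statement.

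\textbf{From $\tau_i$ to $\sigma$, and the main obstacle.} Since $\sigma$ refines each $\tau_i$ cell-by-cell, realizing this refinement by Pachner moves (allowing intermediate triangulations with material vertices, as the statement permits) is a classical procedure whose move count is polynomial in $|\sigma|$. Combining the two estimates and tracking constants carefully then produces the bound claimed. The main obstacle is the geometric bookkeeping in the middle step: bounding how many $\tau_2$-tetrahedra can cut through a single $\tau_1$-tetrahedron tightly enough that the exponent in $\sin\theta_0$ stays linear in $m$. A loose estimate there collapses this into a Mijatovic-style tower, so extracting the explicit factor $(\sin\theta_0)^{-(12m+27/2)}$ requires a careful combination of the dihedral angle bound (which controls fatness of each individual tetrahedron) with the paper's own systole lower bound (which controls how closely lifts of the two triangulations can be packed in $\mathbb{H}^3$). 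A secondary subtlety is that the $2$-skeleta meet at the cusps and require a truncation/re-idealization argument so that the counting is performed on compact thick parts while intermediate triangulations can freely carry material vertices.
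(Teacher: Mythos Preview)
Your overall strategy---build a common geometric subdivision of $\tau_1$ and $\tau_2$ by intersecting them, bound its size, then bound the Pachner moves needed to reach it from each $\tau_i$---is exactly what the paper does. The divergence, and the gap, is in how you propose to bound the size of the common subdivision.

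Your volume-comparison argument does not work as stated. The lifts of $\tau_2$-tetrahedra tile $\mathbb{H}^3$; adjacent ones share faces, so they are certainly not ``separated by at least the systole''. If you meant distinct lifts of the \emph{same} $\tau_2$-tetrahedron, that separation controls nothing useful here: a single tetrahedron $T$ of $\widetilde{\tau}_1$ can be met by many \emph{different} $\widetilde{\tau}_2$-tetrahedra, each intersecting $T$ in a piece of arbitrarily small volume, so a direct volume bound on the number of such pieces fails. Moreover, the systole bound you want to invoke is not an input to this theorem in the paper---it is a sibling result proved from the same underlying estimates (the ``edge-ball'' Lemma~\ref{edgeball} and its relatives), so citing it here reverses the logical flow without supplying the missing mechanism.

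What the paper actually does is more delicate. Fix a tetrahedron $\Delta$ of $\tau_2$ and count the connected components of $E(\widetilde{\tau}_1)\cap\widetilde{\Delta}$, i.e.\ the arcs where \emph{edges} of $\tau_1$ cross $\Delta$. Each such component is shown to meet $\partial\widetilde{\Delta}$ at a point in the thick part $M(z_0)$; around that point, Lemma~\ref{sectorvol} produces a sector of a ball of radius $r_0/2$ and dihedral angle $\theta_0$ lying entirely inside $\widetilde{\Delta}$, and Lemma~\ref{edgeball} forces these sectors to be pairwise disjoint. Now a genuine volume comparison in $\widetilde{\Delta}$ (total sector volume $\le v_{tet}$) bounds the number of components (Lemma~\ref{finiteint}). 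A separate combinatorial lemma (Lemma~\ref{edgesub}) shows that, with at most one exception per tetrahedron, every $3$-polytope of $\tau_1\cap\tau_2$ has an edge lying in an edge of $\tau_1$ or $\tau_2$; combined with the $\theta_0$-bound on edge degrees this converts the edge-arc count into the polytope bound $f(m,\theta_0)$ of Theorem~\ref{mainsubthm}. The constants $l_0$, $z_0$, $a_0$, $r_0$ that appear come from the induced Euclidean triangulation on the normalised cusp torus (Lemmas~\ref{torusedgebounds} and~\ref{taunormalised_existence}), not from the systole. The final step---Pachner moves from $\tau_i$ to $\beta(\tau_1\cap\tau_2)$---is indeed routine once shellability of derived subdivisions (Adiprasito--Benedetti) is invoked; this part of your sketch is fine.
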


This leads to a conceptually simple algorithm to solve the hyperbolic knot equivalence problem with explicit running time bounds:\\
\emph{Hyperbolic knot equivalence algorithm} Let $\kappa_1$ and $\kappa_2$ be two hyperbolic knots in $S^3$. Let $\tau_1$ and $\tau_2$ be geometric ideal triangulations of their complements with $m_1$ and $m_2$ many tetrahedra and all dihedral angles at least $\theta_0$. Let $m=m_1 + m_2$. The algorithm proceeds as follows: Make a list $\mathcal{L}$ of all triangulations that are less than $N(m, \theta_0)$ Pachner moves away from $\tau_1$ where $N(m, \theta_0)$ is the upper bound calculated in Theorem \ref{mainthm1}. There are only 4 possible Pachner moves so this is a finite constructible list of triangulations. We will argue that $\kappa_1$ is equivalent to $\kappa_2$ if and only if some triangulation in $\mathcal{L}$ is combinatiorially isomorphic to $\tau_2$. 

If we can find such a combinatorial isomorphism then the given knot complements are homeomorphic and hence by Gordon-Luecke\cite{GorLue} the two knots are equivalent (up to mirror images). Conversely if the two knots are equivalent, then there exists a homeomorphism between their complements. By the Mostow-Prasad rigidity\cite{Mos}\cite{Pra}, we may assume this homeomorphism is in fact an isometry $h:S^3 \setminus \kappa_1 \to S^3 \setminus \kappa_2$. By Theorem \ref{mainthm1}, $\tau_1$ and $h^{-1}(\tau_2)$ are related by less than  $N(m, \theta_0)$ Pachner moves. Therefore $h$ is a combinatorial isomorphism between $h^{-1}(\tau_2)$ and $\tau_2$, where $h^{-1}(\tau_2)$ is in the list $\mathcal{L}$.\\

Finite element methods which use geometric triangulations often assume that there are no slivers, i.e., tetrahedra with very small dihedral angles. We call a geometric ideal triangulation $\theta_0$-thick if all its dihedral angles are at least $\theta_0$. In Section 2 we use the Euclidean triangulation induced on the cusp tori of $M$ to  calculate a minimum distance between the edges of a $\theta_0$-thick triangulation in the thick part of $M$. This allows us to give a bound on the number of polytopes in $\tau_1 \cap \tau_2$. The manifold $M$ is non-compact, so it is not a priori obvious why such a bound should even exist. Our required theorem then follows in Section 3 from  previous work by Phanse and the first author\cite{KalPha} which gives a bound  on the number of Pachner moves needed to relate a geometric triangulation and its geometric subdivision. In that paper a bound is calculated on the number of Pachner moves needed to relate geometric triangulations of compact constant-curvature $n$-manifolds, in terms of an upper bound on the length of edges and number of tetrahedra.

The existence of a common geometric subdivision also allows us to prove that any two geometric ideal triangulations are in fact related by Pachner moves through geometric triangulations (possibly with material vertices).

\begin{theorem}\label{mainthm2}
Let $M$ be a complete orientable cusped hyperbolic $3$-manifold. Any two geometric ideal triangulations of $M$ are related by a sequence of Pachner moves through geometric (possibly non-ideal) triangulations.
\end{theorem}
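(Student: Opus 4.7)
The plan is to construct a common geometric subdivision $\sigma$ of $\tau_1$ and $\tau_2$ that is itself a triangulation, and then invoke the subdivision-to-Pachner-moves machinery of \cite{KalPha} on each of the pairs $(\tau_1,\sigma)$ and $(\tau_2,\sigma)$.

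First I would take the common polytopal geometric subdivision $\tau_1\cap\tau_2$ constructed in Section 2. Each cell of this subdivision is a convex geometric polytope in $M$, whose vertices are either ideal (at cusps of $M$) or material (at transverse intersections of the $1$- and $2$-skeleta of $\tau_1$ and $\tau_2$). I would promote this polytopal decomposition to a geometric triangulation $\sigma$ by star-triangulating each polytope from an interior cone point, making the cone-point choices coherent on shared faces so that the induced triangulations on adjacent polytopes agree. The result is a geometric triangulation $\sigma$ of $M$ that is simultaneously a geometric subdivision of $\tau_1$ and of $\tau_2$.

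Next I would apply the construction of \cite{KalPha}, which relates a geometric triangulation to any geometric subdivision of it through a sequence of Pachner moves whose intermediate triangulations are all geometric. Applying this to $(\tau_1,\sigma)$ and to $(\tau_2,\sigma)$ and concatenating the first sequence with the reverse of the second yields a sequence of Pachner moves from $\tau_1$ to $\tau_2$ passing only through geometric (possibly non-ideal) triangulations, which is exactly what the theorem asserts.

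The main obstacle is that \cite{KalPha} is stated for compact constant-curvature manifolds, whereas $M$ is non-compact with hyperbolic cusps. The remedy is to use the local character of the \cite{KalPha} construction: the Pachner moves needed to refine a single tetrahedron $T$ of $\tau_i$ into its portion of $\sigma$ affect only $T$ and its immediate neighbours, so they are insensitive to whether $T$ is finite or ideal. Equivalently, one may truncate each cusp along a horospherical torus lying in the thick part of $M$ and deep enough to sit below all material vertices of $\sigma$, apply \cite{KalPha} to the resulting compact hyperbolic manifold with horospherical boundary, and then reattach the cusp neighbourhoods to recover a Pachner sequence on $M$ itself. Either way, geometricity is preserved at every intermediate step, giving the theorem.
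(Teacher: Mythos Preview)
Your overall strategy---build a finite common geometric subdivision from $\tau_1\cap\tau_2$, then invoke a subdivision-to-Pachner-moves result on each side---is exactly the paper's approach. But two concrete points need correction.

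First, the relevant result is Theorem~1.2 of \cite{KalPha2} (quoted in the paper as Theorem~\ref{geompach}), not \cite{KalPha}. The paper \cite{KalPha} gives bounds on Pachner moves for compact constant-curvature manifolds, but its intermediate triangulations are not asserted to be geometric; indeed in the paper's own Lemma~\ref{Lem3.5} the intermediate steps are explicitly allowed to be non-geometric. The geometric-intermediate-steps statement is the content of \cite{KalPha2}, and that theorem is already formulated for cusped hyperbolic manifolds. So your entire final paragraph about truncating cusps or appealing to ``local character'' is unnecessary once you cite the correct result, and as written those workarounds are not rigorous: truncating along a horospherical torus turns ideal tetrahedra into prisms rather than tetrahedra, so you no longer have a triangulation to which \cite{KalPha} applies.

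Second, Theorem~\ref{geompach} requires \emph{simplicial} geometric triangulations, and geometric ideal triangulations need not be simplicial (the Gieseking example in the paper has a single tetrahedron with self-identifications). The paper handles this by first replacing $\tau_i$ with its second derived subdivision $K_i=\beta^2\tau_i$, which is simplicial, and checking that $K_1\cap K_2$ still has only finitely many polytopes. Theorem~\ref{geompach} then relates $\beta^s K_1$ to $\beta^s K_2$ through geometric Pachner moves, and one closes the argument by noting that in dimension~$3$ a derived subdivision is itself realizable by geometric Pachner moves. Your proposal skips this simpliciality step.
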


It is tempting to try and prove this using a simplicial cobordism between the given geometric triangulations $\tau_0$ and $\tau_1$ of $M$, as done for convex polytopes in $\R^n$ by Izmestiev and Schlenker\cite{IzmSch}. This would entail putting a geometry on $M \times I$ that agrees with the hyperbolic structure of $M$ on $M \times \{0\}$ and $M \times \{1\}$. Then extending the triangulations $\tau_0$ of $M \times \{0\}$ and $\tau_1$ of $M \times \{1\}$ to a geometric triangulation of $M \times I$. The sequence of Pachner move would then be obtained by inductively removing $(n+1)$-dimensional simplexes of this triangulation from above and projecting the triangulation of the upper boundary onto $M \times 0$. However, $M \times I$ may not admit any geometric triangulation with the given boundary constraints. And furthermore, there may not be a topmost $(n+1)$-simplex such that the projection map restricts injectively on the upper boundary and takes geometric simplexes to geometric simplexes. These hurdles are handled in \cite{KalPha2} by working locally in stars of simplexes and using the property of regularity of a suitable subdivision.\\

Breslin \cite{Bre} has shown that there exists a constant $L$ such that every complete hyperbolic $3$-manifold $M$ has a geometric triangulation $\tau$ (with material vertices) such that every tetrahedron of $\tau$ that lies in the thick part of $M$ is $L$-bilipschitz diffeomorphic to the standard Euclidean tetrahedron. In contrast, we observe below that for any $\theta_0>0$ there exist knots whose complements have no $\theta_0$-thick geometric ideal triangulations. 

\begin{remark}
Let $\mathcal{F}(V, \theta_0)$ be the family of hyperbolic knots $\kappa$ in $S^3$ such that $S^3 \setminus \kappa$ admits a $\theta_0$-thick geometric ideal triangulation $\tau_\kappa$ and has volume less than $V$. Hyperbolic ideal tetrahedra are completely determined by their dihedral angles $(\alpha, \beta, \gamma)$ such that $\alpha + \beta + \gamma=\pi$. And the volume functional is continuous on the compact set of ideal hyperbolic tetrahedra $\{(\alpha, \beta, \gamma) | \alpha + \beta + \gamma=\pi;\; \alpha, \beta, \gamma \geq \theta_0\}$. Therefore the volume of a tetrahedron in any $\theta_0$-thick geometric ideal triangulation is at least some positive number $v$. This minimum is non-zero, as the volume of a tetrahedron is zero only if one of its dihedral angles is zero. If some  knot $\kappa$ in $\mathcal{F}(V, \theta_0)$ admitted a geometric ideal $\theta_0$-thick triangulation $\tau_\kappa$ with more than $V/v$ tetrahedra then the volume of $S^3 \setminus \kappa$ would be greater than $(V/v) v=V$, which is a contradiction. So for any knot $\kappa \in \mathcal{F}(V, \theta_0)$ the triangulation $\tau_\kappa$ has less than $V/v$ tetrahedra. But there are only finitely many topological manifolds that can be constructed with less than $V/v$ tetrahedra. By Gordon-Luecke\cite{GorLue}, knots in $S^3$ are determined by their complements and so $\mathcal{F}(V, \theta_0)$ has only finitely many knots.

Every prime knot has a prime, twist-reduced diagram. Lackenby\cite{Lac} has shown that the volume of a hyperbolic knot with a prime twist-reduced diagram $D$ is bounded above by $V=10\, v_{tet}(t(D)-1)$ where $v_{tet}$ is the volume of the ideal regular hyperbolic tetrahedron and $t(D)$ is the number of twist regions in $D$. See Figure \ref{twists} for an example of a twist region with $6$ crossings. Futer and Purcell\cite{FutPur} have shown that if the number of crossings in each twist region of $D$ is more than $6$ then the knot is hyperbolic. So an easy way to obtain infinitely many hyperbolic knots with volume less than $V$ is by repeatedly twisting a pair of strands in a twist region of $D$.  Only finitely many of these knots have complements that can admit $\theta_0$-thick triangulations.
\end{remark}

\begin{figure}
\centering
\def\svgwidth{0.2\columnwidth}
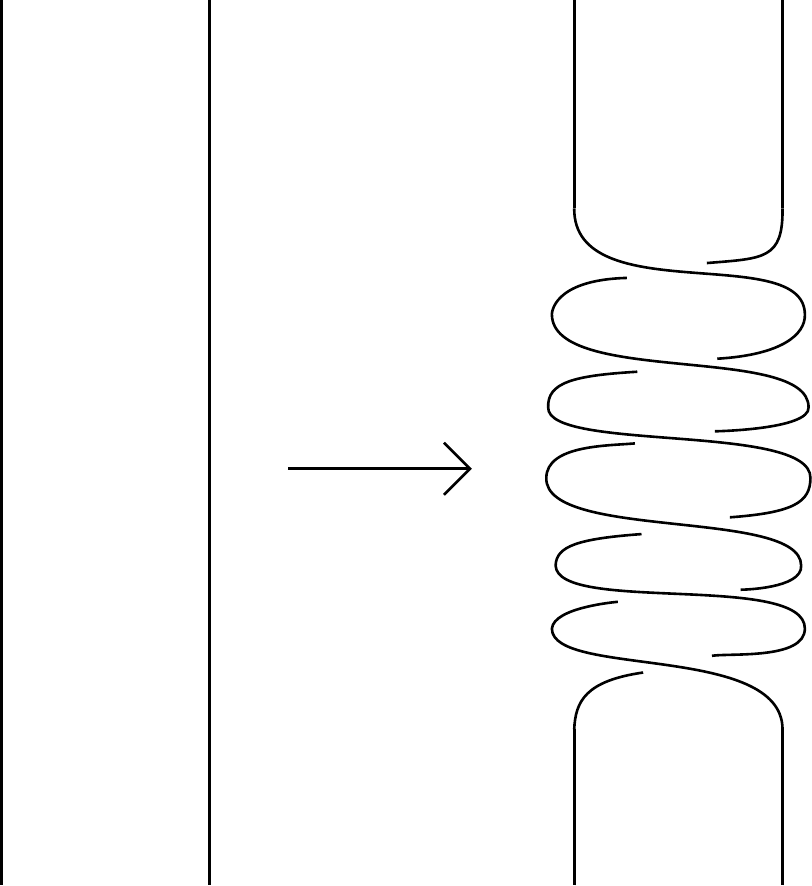
\caption{A twist region of a knot diagram with 6 crossings}\label{twists}
\end{figure}

The systole length of a hyperbolic knot is the length of a shortest closed geodesic in the knot complement. Every hyperbolic knot complement contains a simple closed geodesic\cite{AdaHasSco} so the systole length of a hyperbolic knot is an important property of the knot. In Section 2 we calculate a lower bound on the systole length of $M$ in terms of the number of tetrahedra and a lower dihedral angle bound of a geometric ideal triangulation of $M$.

\begin{theorem}\label{mainthm3}
Let $M$ be a complete orientable cusped hyperbolic $3$-manifold. Let $\tau$ be a geometric ideal triangulation of $M$ with $m$ many tetrahedra and all dihedral angles at least $\theta_0$. The systole length of $M$ is bounded below by 
$$
2^{-9} \cdot  \frac{(\sin\theta_0)^{4m+7/2}}{m^{3/2}}
$$
\end{theorem}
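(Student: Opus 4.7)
The plan is to reduce the theorem to a uniform lower bound on the injectivity radius at every point of the thick part of $M$, and to extract that lower bound from the Euclidean triangulations that $\tau$ induces on the cusp tori.

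First I would run the Section~2 analysis of the cusps. Because $\tau$ is $\theta_0$-thick, each cusp torus $T_c$ inherits a Euclidean triangulation whose triangles all have angles at least $\theta_0$; combining this with the torus area lets one choose explicit disjoint maximal horoball neighbourhoods $H_c$, and from them an explicit lower bound $d_0 = d_0(m,\theta_0)$ on the distance inside $M_{\mathrm{thick}} := M \setminus \bigsqcup_c H_c$ between any two disjoint edges of $\tau$. This is exactly the quantity Section~2 produces as input for Theorem~\ref{mainthm1}, so the work here is tracking constants rather than new geometry. Next, any closed geodesic $\gamma$ in $M$ comes from a loxodromic element of $\pi_1(M)$ and hence cannot enter any horocusp, so $\gamma \subset M_{\mathrm{thick}}$ and $\operatorname{inj}_p M = \operatorname{sys}(M)/2$ for every $p \in \gamma$. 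It therefore suffices to prove a uniform lower bound $\operatorname{inj}_p M \geq r_0$ on $M_{\mathrm{thick}}$ with $r_0$ comparable to $d_0/2$.

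For the uniform bound I would estimate $d(\tilde p, g\tilde p)$ from below, for $\tilde p \in \H^3$ lifting $p \in M_{\mathrm{thick}}$ and every nontrivial $g \in \pi_1 M$, by splitting into two cases. If $g$ is loxodromic, the key observation is that no edge of $\tau$ is a closed geodesic (edges are ideal and have endpoints at cusps), so no nontrivial $g$ setwise fixes any lifted edge $\tilde e$ of $\tilde\tau$; hence taking $\tilde e$ to be the edge of $\tilde \tau$ closest to $\tilde p$ gives $g\tilde e \neq \tilde e$ and so $d(\tilde e, g\tilde e) \geq d_0$. The triangle inequality then yields $d(\tilde p, g\tilde p) \geq d_0 - 2 d(\tilde p, \tilde e)$, and an upper bound on the distance from any point of $M_{\mathrm{thick}}$ to the 1-skeleton, which follows from the $\theta_0$-thickness of the truncated tetrahedra, finishes this case. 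If $g$ is parabolic, then $\tilde p$ sitting outside every lifted horoball $H_c$ together with the shortest translation in the corresponding cusp-torus lattice (also an output of the Section~2 analysis) yields a matching lower bound on $d(\tilde p, g \tilde p)$. Combining both cases gives $\operatorname{inj}_p M \geq r_0$, and feeding the Section~2 estimates through yields the explicit bound $2^{-9}(\sin\theta_0)^{4m+7/2}/m^{3/2}$.

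The main obstacle is the loxodromic case: passing from the edge-to-edge distance $d_0$ to a bound on $d(\tilde p, g\tilde p)$ requires controlling the maximum distance from a point of a truncated $\theta_0$-thick ideal tetrahedron to its 1-skeleton. This depends delicately on the horoball choice, and obtaining the precise exponents $4m+7/2$ and $3/2$ requires making this in-radius-type estimate tight simultaneously with the edge-separation bound $d_0$ from the first step.
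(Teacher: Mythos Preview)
Your proposal has two genuine problems.

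First, the claim that a closed geodesic ``comes from a loxodromic element of $\pi_1(M)$ and hence cannot enter any horocusp'' is false as stated. The axis of a loxodromic isometry can certainly cross a horoball; all that follows from loxodromicity is that the geodesic cannot lie \emph{entirely} inside a cusp neighbourhood. The paper obtains the needed disjointness differently: if the systole is at least the Margulis constant $\epsilon$ we are already done, and if it is shorter then the systole is the core of a Margulis tube, hence disjoint from the Margulis cusp neighbourhoods and a fortiori from the smaller neighbourhoods $C_i(z_0)$. Your ``maximal horoball neighbourhoods'' are not guaranteed to be small enough for disjointness to hold.

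Second, and more seriously, the paper does not attempt a uniform injectivity-radius bound over all of $M(z_0)$, and for good reason: this is exactly what forces you into the unresolved ``main obstacle'' of bounding the distance from an arbitrary interior point of a truncated $\theta_0$-thick tetrahedron to its $1$-skeleton. The paper sidesteps this entirely. Since the interior of every tetrahedron is simply connected, a homotopically nontrivial closed geodesic $\gamma$ must meet the $2$-skeleton of $\tau$, and one such intersection point $p$ lies in $M(z_0)$. It then suffices to bound the injectivity radius only at $p$, a point already sitting on a face $F$. Two cases: either $d(p,\partial F)\geq a_0/4$, and Lemma~\ref{faceball} gives an embedded ball of radius $r(a_0/4)/2$ about $p$; or $p$ is within $a_0/4$ of an edge, and Lemmas~\ref{sameht} and~\ref{edgeball} give an embedded ball of radius $r(a_0/4)$. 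In either case $l(\gamma)\geq s_0=r(a_0/4)$, and simplifying $s_0$ yields the stated bound. No in-radius estimate for truncated tetrahedra is needed, and no loxodromic/parabolic case split appears: Lemmas~\ref{edgeball} and~\ref{faceball} show directly that the covering projection is injective on the relevant balls.
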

 
Note that this lower bound is far from ideal. For example the Figure Eight knot complement is triangulated by $2$ regular hyperbolic ideal tetrahedra. So the lower bound for its systole length obtained from Theorem \ref{mainthm3} is $1.32 \times 10^{-4}$ which is far from its actual systole length of $1.087$\cite{AdaRei}.

\section{Systoles and a common subdivision with boundedly many polytopes}
The main idea used extensively in this section is the notion of a thick-thin decomposition for cusped complete hyperbolic $3$-manifolds, which is a consequence of Margulis' Lemma (see Theorem D.3.13 of \cite{BenPet} for a modern treatment). Let $M$ be a complete orientable cusped hyperbolic $3$-manifold. For $\epsilon>0$, let $M^{\epsilon}$ denote the set of points of $M$ with injectivity radius less than $\epsilon/2$.  Margulis showed that there exists a universal constant $\epsilon>0$ such that $M^\epsilon$ consists of tubes around closed geodesics of length less than $\epsilon$ and neighbourhoods $C_i$ of the cusps each of which is homeomorphic to $T \times (0,\infty)$, where $T$ is a torus. $M^\epsilon$ is known as  the thin part of $M$ and its complement is called the thick part. For non-compact orientable hyperbolic $3$-manifolds, $0.29\leq \epsilon <1$. The lower bound for $\epsilon$ is due to Shalen\cite{Sha} and the upper bound follows from the fact that the optimal Margulis number for the Figure-8 knot complement is less than $1$ (see \cite{Ada2002}).

In the rest of this article, we fix the upper half-space model $\H^3$ for the hyperbolic 3-space, we identify its ideal boundary $\del \H^3$ with $\C \cup \infty$ and the group of orientation preserving isometries of $\H^3$ with $PSL(2, \C)$. We use the notation $H(h)$ to denote the horoball $\{(x, y, z) \in \H^3: z>h\}$ and $X_z = \R^2 \times \{z\}$ to denote the boundary of the closure of $H(h)$ in $\H^3$. We give an outline of the results in this section below:
\newline

Let $\pi: \H^3 \to M$ be a covering map, with $\infty$ sent to a cusp $c$ of $M$. Let $\Gamma_\infty$ be the group of deck transformations of $\pi$ that fix $\infty$. The group $\Gamma_\infty$ is generated by a pair of parabolic isometries of $\H^3$. Let $C$ denote the cusp neighbourhood of $c$ in $M$ with points of injectivity radius less than $\epsilon/2$, i.e., $C$ is the component of $M^\epsilon$ which is a neighbourhood of $c$. The structure of the thick-thin decomposition tells us that there exists $h_0>0$ such that $\pi: H(h_0) \to H(h_0)/\pi_1(M) = H(h_0)/\Gamma_\infty = C$. For any $z>h_0$, we say that $T_z = X_z/\Gamma_\infty=\pi(X_z)$ is a cusp torus embedded in $C$ at height $z$. This height $z$ of the cusp torus $T_z$ clearly depends on the choice of the covering map $\pi$. 

In the first part of this section, we introduce the idea of normalising a covering projection. Let $T_0$ denote the flat torus $\R^2 \times \{0\}/\Gamma_\infty$. By composing the given covering map $\pi$ with a suitable hyperbolic isometry we show  that $\pi$  can be normalised (Lemma \ref{normalised_existence}), in the sense that the corresponding flat torus $T_0$ has shortest Euclidean closed geodesic of unit length. Any two covering projections which are normalised with respect to the same cusp are related by an isometry of $\H^3$ that preserves horoballs $H(h)$ centered at $\infty$ (Lemma \ref{normalised_uniqueness}). We can therefore define the normalised height of a cusp torus $T_z$ in $C$ as the height of the cusp torus with respect to any covering projection which is normalised with respect to $c$. We denote by $h_0(c)$ the normalised height of the cusp torus bounding $C$. The hyperbolic length of a shortest closed geodesic in $T_z \subset C$ is $1/z$, so we can show that $1/\epsilon$ is an upper bound for $h_0(c)$ (Lemma \ref{cuspbounds}). The volume of the cusp neighbourhood $C$ bounded by $T_{h_0(c)}$ is less than the volume $V$ of the manifold, so we can also calculate an upper bound for the Euclidean area $A_0(c)$ of the flat cusp torus $T_0$ in terms of $\epsilon$ and $V$ (Lemma \ref{cuspbounds}). 

Let $\bar{\tau}$ be a Euclidean triangulation of a flat torus all of whose angles are at least $\theta_0$. We next find lower and upper bounds on the lengths of edges of such a triangulation $\bar{\tau}$ in terms of upper bounds on the number of triangles in $\bar{\tau}$ and the area of the torus (Lemma \ref{torusedgebounds}). We also calculate an upper bound on the circumradius for the set of Euclidean triangles of bounded area, with angles at least $\theta_0$ (Lemma \ref{circumrad}). Each ideal hyperbolic triangle which is not vertical lies on a hemisphere with center on the plane $\R^2\times \{0\}$ whose radius is the cirucmradius of the Euclidean triangle in $\R^2 \times \{0\}$ with the same vertices. An ideal triangulation $\tau$ of $M$ induces a Euclidean triangulation $\bar{\tau}$ on a cusp torus $T_z \subset C$ exactly when $T_z$ intersects only those faces of $\tau$ which have $c$ as one of its vertices. We need to take $z$ high enough so as to avoid all those faces of $\tau$ which do not intersect $T_z$ orthogonally. The upper bound on the circumradii of the triangles in $\bar{\tau}$ allows us to calculate such a height $z_0\geq h_0(c)$ (Lemma \ref{taunormalised_existence}). 

Let $M(z_0)$ be the compact hyperbolic manifold obtained from $M$ by removing all the cusp neighbourhoods bounded by the cusp tori $T_{z_0}$ at height $z_0$ in the various cusps. The boundary of $M(z_0)$ is a disjoint union of flat tori and the geometric ideal triangulation $\tau$ of $M$ induces a Euclidean triangulation on them. In the second part of this section, we obtain a lower bound on the distance between the edges of $\tau \cap M(z_0)$ in $M$ using the lower bound we have calculated for the Euclidean edge lengths of the induced triangulations on $\del M(z_0)$ (Lemma \ref{edgeball}). Similarly, for a point $p$ in a 2-cell of $\tau \cap M(z_0)$ that is sufficiently far from the boundary of the face, we calculate a lower bound on the distance between $p$ and the other faces of $\tau$ (Lemma \ref{faceball}). These two bounds together give a lower bound on the `thickness' of stars of cells in $\tau \cap M(z_0)$. In particular, we obtain a lower bound for the injectivity radius of points in the $2$-cells of $\tau \cap M(z_0)$. Every closed geodesic of $M$ intersects a face of $\tau$ at some point in $M(z_0)$, which therefore gives us a lower bound on the systole length of $M$ (Theorem \ref{mainthm3}).

The aim of the third part of this section is to obtain an upper bound on the number of polytopes in the polytopal complex $\tau_1 \cap \tau_2$ obtained by the intersection of two ideal geometric triangulations $\tau_1$ and $\tau_2$ of $M$. We first obtain an upper bound on the number of components in the intersection of the edges of $\tau_1$ with a fixed tetrahedron $\Delta$ of $\tau_2$. For each such component $\sigma$, we show using the bounds calculated for thickness of stars, that there is a $\theta_0$-sector of a ball of radius $r_0$ which lies entirely in $\Delta$ (Lemma \ref{sectorvol}). Furthermore all such sectors for different components $\sigma$ are pairwise disjoint. The volume of $\Delta$ is bounded above by the volume of the regular ideal tetrahedron, so we can calculate a bound on the number of such components (Lemma \ref{finiteint}). We then show that all polytopes (except perhaps one) in the polytopal complex $\Delta \cap \tau_1$ must have an edge that is part of an edge of $\Delta$ or an edge of $\tau_1$ (Lemma \ref{edgesub}). Combining these results we get a bound on the number of polytopes in $\tau_1 \cap \tau_2$ (Theorem \ref{mainsubthm}). Once we have shown finiteness of this intersection it follows, using a result of \cite{KalPha2}, that any two geometric  ideal triangulations of $M$ are related by Pachner moves through geometric triangulations that may be non-ideal (Theorem \ref{mainthm2}).

\subsection{Normalised covering maps}
Let $c$ be a cusp of $M$ and let $\tau$ be an ideal triangulation of $M$. In this section we introduce the notion of normalising a covering projection with respect to $c$. We show that normalised cusp heights are independent of the choice of the normalised covering projections with respect to the cusp. We obtain upper bounds on the normalised cusp height, the $\tau$-normalised cusp height and the area of the normalised flat cusp torus of $M$. We also calculate a lower bound on the Euclidean length of edges of a $\theta_0$-thick Euclidean triangulation of a flat torus of bounded area. We list out all these bounds at the end of this subsection in Remark \ref{constants}.

\begin{definition}\label{normalised_defn}
Let $pr: \H^3 \to M$ be a covering map that sends $\infty$ to the cusp $c$ of $M$. Let $\Gamma_\infty$ be the subgroup of the group of deck transformations of $pr$ which fix $\infty$. We say that $pr$ is normalised with respect to $c$ if $\R^2 \times \{0\}/\Gamma_\infty$ is a flat torus with shortest closed Euclidean geodesic of length $1$.
\end{definition}

\begin{definition}\label{normalised_notation}
Let $\pi: \H^3 \to M$ be a covering projection that sends $\infty$ to a cusp $c$ of $M$. We fix some notations related to consequences of the thick-thin decomposition of $M$:
\begin{enumerate}
\item{Let $\Gamma_\infty$ be the subgroup of the group of deck transformations of $\pi$ which fix $\infty$. Then $\Gamma_\infty$ is generated by a pair of independent parabolic isometries $\alpha$ and $\beta$, i.e., isometries of the kind $\alpha(p) = p+u$ and $\beta(p)= p + v$ with $u$ and $v$ linearly independent vectors of $\R^2 \times \{0\}$. We call this presentation of the cusp group $\Gamma_\infty$ the presentation with respect to $\pi$.}
\item{Let $H(h)$ be the horoball $\{(x, y, z) \in \H^3: z>h\}$. Let $C$ be the cusp neighbourhood of the cusp $c$, which has injectivity radius less than $\epsilon/2$ at every point. There exists $h>0$ such that $\pi$ induces an isometry from the set $H(h)/\Gamma_\infty$ to $C$. We call such an $h$ the cusp height with respect to $\pi$.
When $\pi$ is normalised, we denote this cusp height by $h_0(c)$ and call it the normalised cusp height of $c$. We shall show in Lemma \ref{normalised_uniqueness} that the normalised cusp height $h_0(c)$ does not depend on the choice of the normalised covering map with respect to $c$.}
\item{Let $P(u, v)$ denote the Euclidean parallelogram spanned by $u$ and $v$ in $\R^2 \times \{0\}$. Let $T_0$ denote the flat torus $\R^2 \times \{0\}/\Gamma_\infty = P(u, v)/<\alpha, \beta>$. We call $T_0$ the flat cusp torus with respect to $\pi$. By definition when $\pi$ is normalised, any shortest closed Euclidean geodesic in $T_0$ is of unit length. We call the Euclidean area of such a $T_0$ the area of the normalised flat cusp torus and denote it by $A_0(c)$. We shall show in Lemma \ref{normalised_uniqueness} that the area of the normalised flat cusp torus $A_0(c)$ does not depend on the choice of the normalised covering map with respect to $c$.}
\end{enumerate}
\end{definition}

We next show that any covering projection can be normalised by composing with a suitable hyperbolic isometry.
\begin{lemma}\label{normalised_existence}
Let $M$ be a complete cusped orientable hyperbolic $3$-manifold. Let $\pi: \H^3 \to M$ be a covering projection sending $\infty$ to a cusp $c$ of $M$. Then there exists a hyperbolic isometry $\psi$ of $\H^3$ fixing $\infty$ such that $pr= \pi \circ \psi$ is a covering projection that is normalised with respect to the cusp $c$.
\end{lemma}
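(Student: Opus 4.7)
The plan is to produce $\psi$ as a pure dilation of the upper half-space model. Every orientation-preserving isometry of $\H^3$ that fixes $\infty$ acts on the boundary $\C$ as a complex affine map $p \mapsto \lambda p + w$ and extends to $\H^3$ as $(p, t) \mapsto (\lambda p + w, |\lambda| t)$. I would take $w = 0$ and let $\lambda$ be a positive real chosen so that the lattice generating $T_0$ gets rescaled to have shortest vector of length $1$.

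Concretely, write $\Gamma_\infty = \langle \alpha, \beta \rangle$ with $\alpha(p) = p + u$, $\beta(p) = p + v$ as in Definition \ref{normalised_notation}, so that the flat torus $T_0 = \R^2 \times \{0\}/\Gamma_\infty$ corresponds to the rank-two lattice $\Lambda = \Z u + \Z v \subset \R^2$. Because $\Lambda$ is discrete, the minimum
$$ \ell := \min_{(m,n) \in \Z^2 \setminus \{(0,0)\}} |m u + n v| $$
is attained and strictly positive, and equals the length of a shortest closed Euclidean geodesic in $T_0$. Define $\psi: \H^3 \to \H^3$ by $\psi(p, t) = (\ell p, \ell t)$; this is a hyperbolic isometry that fixes $\infty$, and hence $pr := \pi \circ \psi$ is again a covering projection $\H^3 \to M$ sending $\infty$ to the cusp $c$.

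To verify that $pr$ is normalised I would just compute. The deck group of $pr$ is the conjugate $\psi^{-1} \Gamma \psi$ of the deck group $\Gamma$ of $\pi$, and its subgroup fixing $\infty$ is $\psi^{-1} \Gamma_\infty \psi$ (since $\psi$ fixes $\infty$). A direct computation gives
$$ \psi^{-1} \alpha \psi(p) = p + u/\ell, \qquad \psi^{-1} \beta \psi(p) = p + v/\ell, $$
so the new cusp group is the translation subgroup generated by $u/\ell$ and $v/\ell$. The corresponding flat cusp torus is therefore $\R^2/(\tfrac{1}{\ell}\Lambda)$, whose shortest closed Euclidean geodesic has length $\ell/\ell = 1$. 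This is the normalisation condition of Definition \ref{normalised_defn}, so $pr$ is normalised with respect to $c$.

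There is no real obstacle here: the only substantive point is the existence and positivity of the shortest-vector length $\ell$, which is immediate from discreteness of $\Lambda$, and the verification that conjugation by a Euclidean dilation rescales the parabolic translations by the inverse factor. Uniqueness of the normalised cusp height and area, which is the assertion of Lemma \ref{normalised_uniqueness}, is deferred and not needed here.
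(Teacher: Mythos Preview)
Your argument is correct and essentially identical to the paper's: both take $\psi$ to be the dilation $p \mapsto \ell p$ (the paper writes $\phi(z) = (1/l)z$ and sets $\psi = \phi^{-1}$), and both verify normalisation by observing that conjugation rescales the parabolic translation vectors by $1/\ell$. Your write-up is in fact slightly more explicit about the conjugation computation and the positivity of $\ell$, but there is no difference in approach.
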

\begin{proof}
As in the notation fixed in Definition \ref{normalised_notation}, let $\Gamma_\infty$ be the subgroup of deck transformations of $\pi$ which fix $\infty$. It is generated by two parabolic elements $\alpha$ and $\beta$ which have the form $\alpha(p) =p + u$ and $\beta(p)=p + v$, with $u$ and $v$ linearly independent vectors of $\R^2 \times \{0\}$. Let $P(u, v)$ denote the Euclidean parallelogram spanned by $u$ and $v$ in $\R^2 \times \{0\}$. Let $l$ be the length of a shortest closed Euclidean geodesic in the flat torus $T_0=P(u, v)/\Gamma_\infty$. Let $\phi(z) = (1/l) z$ be a hyperbolic isometry of $\H^3$. Let $h$ denote the cusp height with respect to $\pi$ and let $h_0=h/l$. Let $\Gamma'_\infty = \phi \circ \Gamma_\infty \circ \phi^{-1}$. Let $P'$ be the parallelogram $\phi(P(u, v))$ i.e., $P(u,v)$ scaled by $1/l$. Then $\phi$ takes the horoball $H(h)$ to the horoball $H(h_0)$, the quotient space $H(h)/\Gamma_\infty$ to $H(h_0)/\Gamma'_\infty$ and the flat torus $T_0=P(u, v)/\Gamma_\infty$ to the flat torus $T'_0=P'/\Gamma'_\infty$.

Let $\psi = \phi^{-1}$ and let $pr: \H^3 \to M$ be the covering map $pr=\pi \circ \psi$. Then $\Gamma'_\infty$ is the subgroup of the group of deck transformations of $pr$ that fix $\infty$. Also $pr$ induces an isometry from $C(h_0)=H(h_0)/\Gamma'_\infty$ to the cusp neighbourhood $C$ of $c$.  And finally by construction a shortest closed Euclidean geodesic in $T'_0$ has unit length. So $pr$ is a normalised covering projection with respect to the cusp $c$.
\end{proof}

We show below that the normalised cusp height and the area of the normalised flat cusp torus are both properties of the cusp and do not depend on the choice of the normalised covering projection that is used to define it.

\begin{lemma}\label{normalised_uniqueness}
Let $pr_1: \H^3 \to M$ and $pr_2: \H^3 \to M$ be covering projections that are normalised with respect to the same cusp of $M$. Then there exists an isometry $\phi$ of $\H^3$ which is a parabolic or elliptic isometry that fixes $\infty$ such that $pr_1= pr_2 \circ \phi$. The cusp height with respect to $pr_1$ is equal to the cusp height with respect to $pr_2$ and the area of the flat cusp torus with respect to $pr_1$ is equal to the area of the flat cusp torus with respect to $pr_2$.
\end{lemma}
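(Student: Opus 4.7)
The plan is to build the isometry $\phi$ from the universal covering property of $\H^3$, adjust it to fix $\infty$, and then use the normalisation hypothesis to constrain its form. Since $\H^3$ is simply connected, both $pr_1$ and $pr_2$ are universal covering projections of $M$, so after choosing compatible basepoints the universal property of the universal cover provides a homeomorphism $\phi : \H^3 \to \H^3$ with $pr_1 = pr_2 \circ \phi$; because $pr_1$ and $pr_2$ are local isometries, $\phi$ is itself a hyperbolic isometry. A priori $\phi(\infty)$ lies only somewhere in $pr_2^{-1}(c)$, which is a single orbit of the deck transformation group $\Gamma_2$ of $pr_2$. Picking $\gamma \in \Gamma_2$ with $\gamma(\infty) = \phi(\infty)$ and replacing $\phi$ by $\gamma^{-1} \circ \phi$---which still satisfies $pr_1 = pr_2 \circ \phi$ because $pr_2 \circ \gamma^{-1} = pr_2$---I may assume from the outset that $\phi$ fixes $\infty$.

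Next I would show that $\phi$ preserves the height coordinate. Writing $\phi$ in the standard form $(z, t) \mapsto (az + b,\, |a| t)$ for an orientation-preserving isometry of $\H^3$ fixing $\infty$, with $a \in \C \setminus \{0\}$ and $b \in \C$, the identity $pr_1 = pr_2 \circ \phi$ conjugates the deck groups by $\phi\Gamma_1\phi^{-1} = \Gamma_2$, and restricts on cusp stabilisers to $\phi \Gamma_{\infty, 1} \phi^{-1} = \Gamma_{\infty, 2}$. A short computation shows that conjugation by $\phi$ sends the parabolic translation $p \mapsto p + w$ to the translation $p \mapsto p + aw$. Hence if $\Lambda_1,\Lambda_2 \subset \C$ denote the translation lattices of the two cusp subgroups, then $\Lambda_2 = a\Lambda_1$. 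Since both covering projections are normalised, the shortest nonzero vectors of $\Lambda_1$ and $\Lambda_2$ both have Euclidean length $1$, which forces $|a| = 1$. Thus $\phi$ is elliptic (if $a \neq 1$), parabolic (if $a = 1$ and $b \neq 0$), or the identity, and in all cases preserves every horoball $H(h)$ centred at $\infty$.

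The two numerical equalities now fall out. Since $\phi$ maps each $H(h)$ to itself, whenever $pr_1$ restricts to an isometry $H(h)/\Gamma_{\infty, 1} \to C$, combining $pr_1 = pr_2 \circ \phi$ with $\phi(H(h)) = H(h)$ and $\phi \Gamma_{\infty, 1} \phi^{-1} = \Gamma_{\infty, 2}$ shows that $pr_2$ likewise restricts to an isometry $H(h)/\Gamma_{\infty, 2} \to C$, and symmetry gives equality of cusp heights. For the areas, the relation $\Lambda_2 = a\Lambda_1$ forces the areas of fundamental parallelograms to scale by $|a|^2 = 1$, so the two flat cusp tori have equal area. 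The only delicate step is the middle paragraph, where the scalar constraint $|a| = 1$ must be extracted from the normalisation via the lattice identity $\Lambda_2 = a\Lambda_1$; once that is in hand, the equalities of heights and areas are immediate consequences of $|a| = 1$.
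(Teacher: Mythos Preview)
Your proof is correct and follows essentially the same approach as the paper: lift to an isometry, post-compose with a deck transformation to fix $\infty$, and use the normalisation to kill the scaling factor so that horoballs are preserved. Your execution is more direct than the paper's, which splits into cases (pure dilation, then parabolic/elliptic, then general loxodromic reduced to the previous two), whereas your uniform parametrisation $(z,t)\mapsto(az+b,|a|t)$ together with the lattice identity $\Lambda_2=a\Lambda_1$ extracts $|a|=1$ in one stroke.
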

\begin{proof}
The space $\H^3$ is simply connected and $pr_2: \H^3 \to M$ is a covering map so by the lifting criterion there exists a homeomorphism $\sigma: \H^3 \to \H^3$ such that $pr_2 \circ \sigma = pr_1$. Both $pr_1$ and $pr_2$ are local isometries so $\sigma$ is in fact an isometry. Suppose both $pr_1$ and $pr_2$ take $\infty$ to the cusp $c$ of $M$ but $\sigma(\infty) = A \in \del \H^3$. Then $pr_2(A)=c$ and so there exists a deck transformation $\psi$ of $pr_2$ that takes $A$ to $\infty$. Let $\phi = \psi\circ \sigma$. Then $pr_2 \circ \phi = (pr_2 \circ \psi) \circ \sigma = pr_2 \circ \sigma = pr_1$ and $\phi(\infty) = \infty$.

Suppose $\phi$ is an isometry taking the form $\phi(z) = f_r(z) = r z$ on $\C$, for some $r>0$. If $\delta$ is a deck transformation of $pr_2$, then $\phi^{-1} \circ \delta \circ \phi$ is a deck transformation of $pr_1$. Let $\Gamma_\infty^1=<\alpha_1, \beta_1>$ be the presentation of the cusp group with respect to $pr_1$,  where $\alpha_1 (p) = p + u$ and $\beta_1(p) = p +v$ for independent vectors $u$ and $v$ of $\R^2 \times \{0\}$. Let $P(u, v)$ denote the parallelogram in $\R^2 \times \{0\}$ with sides $u$ and $v$. Let $T_1=P(u, v)/\Gamma_\infty^1$ denote the normalised flat cusp torus with respect to $pr_1$. The group of deck transformations of $pr_2$ which fix $\infty$ is given by $\Gamma_\infty^2 = \phi\, \Gamma_\infty^1\, \phi^{-1}$. The subgroup $\Gamma_\infty^2$ is therefore generated by the parabolics $\alpha_2(p) = p + r u$ and $\beta_2=p + r v$. So the normalised flat cusp torus $T_2 =P(r u, r v)/\Gamma_\infty^2$ has shortest closed geodesic of length $r$ times the length of the shortest closed geodesic of $T_1$ and its area is $r^2$ times the area of $T_1$. Both $pr_1$ and $pr_2$ are normalised so the length of the shortest closed geodesic in $T_1$ and in $T_2$ is $1$. Therefore $r=1$ and $\phi$ is the identity map.

Suppose $\phi$ is an elliptic or parabolic isometry fixing $\infty$. Then $\phi$ preserves horoballs centered at $\infty$, so $\phi(H(h))=H(h)$ for $h>0$. Let $h_1$ and $h_2$ be the normalised cusp heights with respect to $pr_1$ and $pr_2$. Then both  $\phi(H(h_1))/\phi \, \Gamma_\infty^1\, \phi^{-1} = H(h_1)/\Gamma_\infty^2$ and $H(h_2)/\Gamma_\infty^2 $ are isometric to $C$ under $pr_2$. So $h_1=h_2$ as required. Parabolic and elliptic isometries that fix $\infty$ act on $\R^2 \times \{0\}$ as Euclidean isometries, so the area of the flat cusp torus with respect to $pr_1$ is equal to the area of the flat cusp torus with respect to $pr_2$.

Let $g_\theta(z)=e^{i\theta}z$ and let $h_v(z) = z-v$. Suppose $\phi$ is a  loxodromic isometry which fixes the vertical geodesic $(w, \infty)$. Then $\phi$ has the form $\phi(z) = re^{i\theta}(z-w) + w = re^{i\theta}z - v$ where $v=re^{i\theta}w-w$, so $\phi=h_v \circ g_\theta \circ f_r$. Let $\pi: \H^3 \to M$ be the covering projection $\pi=pr_2 \circ h_v \circ g_\theta$. All orientation preserving Euclidean isometries of $\R^2$ are either rotations or translations, so $h_v \circ g_\theta$ is a parabolic or elliptic isometry. By the above arguments $\pi$ is also a covering projection that is normalised with respect to the same cusp as $pr_2$. Also, the cusp height and area of the flat cusp torus with respect to $\pi$ is the same as that with respect to $pr_2$. So replacing $pr_2$ with $\pi$ in the previous argument we can conclude that $r=1$, i.e., $f_r=id$ and $\phi$ is a parabolic or elliptic isometry that fixes $\infty$. 

Every isometry of $\H^3$ is a loxodromic, parabolic or elliptic isometry therefore $\phi$ must be a parabolic or elliptic isometry that fixes $\infty$.
\end{proof}

We will now obtain an upper bound for the normalised cusp height $h_0(c)$ and the area $A_0(c)$ of the normalised flat cusp torus of a cusp $c$.

\begin{lemma}\label{cuspbounds}
Let $M$ be a complete cusped orientable hyperbolic $3$-manifold with volume $V$ and let $c$ be a cusp of $M$. Let $\epsilon$ be a Margulis number for such manifolds. 
Then,
$$h_0(c) \leq 1/\epsilon$$
$$A_0(c) \leq 2V/\epsilon^2$$
\end{lemma}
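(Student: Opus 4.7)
The plan is to use the normalisation directly and reduce both inequalities to explicit computations in the upper half-space model. By Lemma \ref{normalised_existence} I fix a covering projection $pr: \H^3 \to M$ normalised with respect to $c$, and by Lemma \ref{normalised_uniqueness} the numbers $h_0(c)$ and $A_0(c)$ are well-defined invariants of $c$. I would write $\Gamma_\infty = \langle \alpha, \beta\rangle$ with $\alpha$ a parabolic translation by a unit vector $u$ realising a shortest Euclidean geodesic on the flat cusp torus $T_0$.

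For the first bound, the key step is to show that $H(1/\epsilon)$ descends entirely into the cusp component $C = H(h_0(c))/\Gamma_\infty$. For any $\tilde p = (p,z)$ with $z \geq 1/\epsilon$, the loop through $pr(\tilde p)$ coming from $\alpha$ has hyperbolic length
\[
d_{\H^3}\bigl(\tilde p,\,\alpha(\tilde p)\bigr) = 2\arcsinh(1/(2z)),
\]
a direct consequence of the standard upper half-space distance formula. Since $\arcsinh$ is increasing and $\sinh(x) > x$ for $x > 0$, this length is at most $2\arcsinh(\epsilon/2) < \epsilon$. Hence the injectivity radius at $pr(\tilde p)$ is strictly less than $\epsilon/2$, so $pr(\tilde p) \in M^\epsilon$. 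As $pr(H(1/\epsilon))$ is connected and accumulates at $c$, it lies in the single component $C$. Lifting back, the connected set $H(1/\epsilon)$ sits in the component of $pr^{-1}(C)$ centred at $\infty$, namely $H(h_0(c))$, so $H(1/\epsilon) \subseteq H(h_0(c))$ and hence $h_0(c) \leq 1/\epsilon$.

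For the second bound, I would just compute the hyperbolic volume of $C$ directly. A fundamental domain for $\Gamma_\infty$ acting on $H(h_0(c))$ is the prism $P(u,v) \times (h_0(c), \infty)$, so integrating the volume form $dx\,dy\,dz/z^3$ yields
\[
\mathrm{Vol}(C) \;=\; A_0(c)\int_{h_0(c)}^{\infty}\frac{dz}{z^3} \;=\; \frac{A_0(c)}{2\,h_0(c)^2}.
\]
Combining $\mathrm{Vol}(C) \leq V$ with the inequality $h_0(c) \leq 1/\epsilon$ from the first part gives $A_0(c) \leq 2V\, h_0(c)^2 \leq 2V/\epsilon^2$.

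The mildly subtle point, and the main thing to get right, is the lifting step in the first bound: one must argue that the connected open set $H(1/\epsilon)$, whose image lies in $C$, actually sits inside the specific horoball component $H(h_0(c))$ of $pr^{-1}(C)$ based at $\infty$, rather than partly in some translate of this component by a deck transformation outside $\Gamma_\infty$. Once that is handled, everything else is a routine distance or volume computation.
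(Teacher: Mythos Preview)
Your proof is correct and follows essentially the same route as the paper's: show that points at height $\geq 1/\epsilon$ have injectivity radius $<\epsilon/2$ via a short translation loop, deduce $H(1/\epsilon)\subset H(h_0(c))$, and then compute the cusp volume as $A_0(c)/(2h^2)$. The only cosmetic differences are that the paper bounds the loop length by the horocycle length $1/z$ rather than the geodesic translation length $2\arcsinh(1/(2z))$, and for the area bound it integrates directly over $H(1/\epsilon)$ instead of over $H(h_0(c))$ and then invoking the first inequality.
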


\begin{proof}
Fix a covering projection $pr: \H^3 \to M$ that is normalised with respect to the cusp $c$. We shall use the notation fixed in Definition \ref{normalised_notation}. Let $w_0=1/\epsilon$ and let $p=(x_0, y_0, z_0) \in H(h_0(c))$. After composing with elements of $\Gamma_\infty$ we may assume that $(x_0, y_0, 0) \in P(u, v)$. Let $\gamma_0(t)=(x(t), y(t), 0)$ be a Euclidean geodesic path (straight line segment) in $P(u,v)$ through $(x_0, y_0, 0)$, such that upon taking quotients $\gamma_0/\Gamma_\infty$ is a shortest closed geodesic in $T_0=P(u,v)/\Gamma_\infty$. Let $\gamma_z(t) = (x(t), y(t), z)$ be a parallel path to $\gamma_0$ at height $z>w_0$. The hyperbolic length of $\gamma_z$ is $1/z$ times the Euclidean length of $\gamma_0$. This is because the hyperbolic metric $\sqrt{(dx)^2 + (dy)^2 + (dz)^2}/z$ reduces to $\sqrt{(dx)^2 + (dy)^2}/z$ on the horosphere $\R^2 \times \{z\}$. The curve $\gamma_0/\Gamma_\infty$ is a shortest closed geodesic in the normalised flat torus $T_0$, so it has unit Euclidean length. Hence the hyperbolic length of $\gamma_z/\Gamma_\infty$ is $1/z<1/w_0=\epsilon$. The essential closed curve $pr(\gamma_z)$  through $pr(p)$ is of hyperbolic length less than $\epsilon$. Consequently, the neighbourhood of radius $\epsilon/2$ about $pr(p)$ is not an embedded ball. The injectivity radius at $pr(p)$ must therefore be less than $\epsilon/2$, i.e., $pr(p)$ lies in the thin part of $M$. The thin part of $M$ is a disjoint union of the cusp neighbourhoods and tubes around short closed geodesics.  The map $pr$ sends $\infty$ to the cusp $c$, so $pr(H(w_0)) \subset C= pr(H(h_0))$. And so $w_0 \geq h_0$ as required.

We next obtain the bound on the Euclidean area $A_0(c)$ of the flat cusp torus $T_0$ of $pr$. The volume of the set $\lift{C}(w_0) = \{(x, y, z) \in \H^3: z>w_0, (x, y) \in P(u,v)\}$ is $\int_{z=w_0}^\infty \int_{(x, y) \in P(u,v)} (1/z^3) dx dy dz = Area(P(u,v))/(2w_0^2)$. The volume of $C(w_0) = H(w_0)/\Gamma_\infty=\lift{C}(w_0)/\Gamma_\infty$ is equal to the volume of $\lift{C}(w_0)$. The cusp neighbourhood $C(w_0)$ is a subset of $C$, so the volume of $\lift{C}(w_0)$ is less than the volume of the manifold $V$. This gives $A_0(c)=Area(P(u,v))\leq 2Vw_0^2=2V/\epsilon^2$ as required.
\end{proof}

\begin{definition}\label{thickdef}
A geometric ideal triangulation $\tau$ of $M$ is a realisation of $M$ as the quotient of a collection of hyperbolic ideal tetrahedra by face pairing isometries such that the tetrahedra of $\tau$ glue together consistently to give the complete hyperbolic structure on $M$. 
We say $\tau$ is $\theta_0$-thick if all the dihedral angles of $\tau$ are at least $\theta_0$. Similarly for a Euclidean triangulation $\bar{\tau}$ of a flat torus, we say $\bar{\tau}$ is $\theta_0$-thick if the angles of all its triangles are at least $\theta_0$. Note that in either case, as $3\theta_0 \leq \pi$, so $0< \theta_0 \leq \pi/3$.
\end{definition}

\begin{remark}\label{sinecomp}
Let $\theta$ be the angle of a $\theta_0$-thick Euclidean triangle. An observation we shall repeatedly use is that $\sin\theta\geq \sin\theta_0$. If $\theta$ lies in $[\theta_0, \pi)$ and $\sin\theta<\sin\theta_0$, then $\theta\in(\pi-\theta_0, \pi)$. This would imply that the sum of angles of the triangle is greater than $(\pi - \theta_0) + 2\theta_0 >\pi$, which is a contradiction. A similar argument holds when $\theta$ is the dihedral angle of a $\theta_0$-thick ideal tetrahedron.
\end{remark}

The following lemma gives bounds on the length of edges of a $\theta_0$-thick triangulation of a flat torus with bounded area and unit length shortest closed geodesic.

\begin{lemma}\label{torusedgebounds}
Let $T$ be a flat torus with area at most $A$ and the shortest closed geodesic of unit length. Let $\bar{\tau}$ be a $\theta_0$-thick Euclidean triangulation of $T$ with  at most $n$ triangles. Then the edge lengths of $\bar{\tau}$ have lower bound $l_0(n)$ and upper bound $L_0(A)$ where
$$l_0(n)=\frac{(\sin\theta_0)^n(\sqrt{n^2+8n}-n)}{4n}$$
$$L_0(A)=2\sqrt{A \cot\theta_0}$$
\end{lemma}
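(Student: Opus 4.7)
The two inequalities are independent, so I plan to handle them separately.

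For the upper bound, consider any triangle of $\bar\tau$ with longest side $c$, opposite (hence largest) angle $\gamma$, and remaining angles $\alpha,\beta\ge\theta_0$. Dropping the altitude from the vertex with angle $\gamma$ decomposes $c=h(\cot\alpha+\cot\beta)$, so $\text{area}=c^{2}/(2(\cot\alpha+\cot\beta))$. Because $\gamma$ is the largest angle, $\alpha+\beta\le 2\pi/3$, and on the region $\{\alpha,\beta\ge\theta_0\}$ the sum $\cot\alpha+\cot\beta$ is strictly decreasing in each variable, so it is maximized at the corner $\alpha=\beta=\theta_0$, giving $\cot\alpha+\cot\beta\le 2\cot\theta_0$. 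Combined with the fact that every triangle of $\bar\tau$ has area at most $A$, this yields $c\le 2\sqrt{A\cot\theta_0}=L_0(A)$.

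For the lower bound, let $\ell$ and $L$ denote the shortest and longest edge lengths of $\bar\tau$. The plan has three ingredients. First, the law of sines combined with Remark~\ref{sinecomp} shows that any two edges of a single $\theta_0$-thick Euclidean triangle differ in length by a factor of at most $1/\sin\theta_0$. Second, I propagate this globally: the dual graph of $\bar\tau$ has $n$ vertices and hence diameter at most $n-1$, so picking triangles $T,T'$ containing an edge of length $\ell$ and an edge of length $L$ and a shortest dual-graph path $T=T_0,T_1,\ldots,T_d=T'$ with $d\le n-1$, I form the chain of edges (shortest edge in $T_0$, shared edge of $T_0$ and $T_1$, $\ldots$, shared edge of $T_{d-1}$ and $T_d$, longest edge in $T_d$). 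Consecutive entries of this chain lie in a common triangle, and there are $d+1\le n$ such links, so iterating the single-triangle bound gives $L\le \ell/(\sin\theta_0)^{n}$. Third, I invoke the systole: since the shortest closed Euclidean geodesic of $T$ has length $1$, every homotopically essential loop has length at least $1$. The $1$-skeleton of $\bar\tau$ has $n/2$ vertices and $3n/2$ edges; a spanning tree together with any of the $n+1$ remaining edges closes a cycle of at most $n/2+1$ edges, and since $H_1(T;\Z)=\Z^{2}$ has rank two while spanning-tree edges contribute $0$ to $H_1$, at least two of these extra edges close up homotopically essential cycles. Thus some essential loop in the $1$-skeleton uses at most $n/2+1$ edges, each of length at most $L$, forcing $(n/2+1)L\ge 1$.

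Combining the second and third ingredients gives $\ell\ge 2(\sin\theta_0)^{n}/(n+2)$. Since $\sqrt{n^{2}+8n}+n\ge n+2$ for all $n\ge1$ (equivalently $\sqrt{n^{2}+8n}\ge 2$), this implies $\ell\ge 2(\sin\theta_0)^{n}/(\sqrt{n^{2}+8n}+n)$, which after rationalizing the denominator equals the stated $l_0(n)=(\sin\theta_0)^{n}(\sqrt{n^{2}+8n}-n)/(4n)$.

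The main obstacle will be the bookkeeping in the chain argument of step two: each shared edge between consecutive triangles in the dual path is counted exactly once, so one must verify carefully that the total number of factors of $1/\sin\theta_0$ picked up is $d+1\le n$ rather than $2d$. A secondary subtlety is ensuring the spanning-tree-plus-edge cycle in step three is genuinely essential in $\pi_1(T)$, which follows from the rank-two condition on $H_1(T;\Z)$ combined with the fact that any spanning tree is contractible.
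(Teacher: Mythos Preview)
Your proof is correct. The upper bound argument is essentially the paper's: where you use the altitude decomposition $c=h(\cot\alpha+\cot\beta)$ and bound $\cot\alpha+\cot\beta\le 2\cot\theta_0$, the paper inscribes an isosceles triangle with base angles $\theta_0$ over the longest edge; both yield $L^2\tan\theta_0/4\le A$.

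For the lower bound, however, you take a genuinely different and cleaner route. The paper uses the \emph{geometric} shortest closed geodesic $\alpha$ of length $1$: it first shows that any geodesic segment of length at most $L$ meets $\alpha$ in at most $2L+1$ points (via a cut-and-paste argument on subarcs of $\alpha$), so $\alpha$ is cut into at most $(2L+1)n$ pieces each of length at most $L$, giving the \emph{quadratic} inequality $1\le (2L+1)nL$. Substituting $L\le \ell/(\sin\theta_0)^n$ and solving the quadratic produces exactly the root $l_0(n)=(\sin\theta_0)^n(\sqrt{n^2+8n}-n)/(4n)$, which explains the otherwise mysterious form of the bound. Your approach instead finds a \emph{combinatorial} essential cycle in the $1$-skeleton using a spanning tree, obtaining the \emph{linear} inequality $(n/2+1)L\ge 1$; this is both simpler and sharper, and you then correctly weaken $2/(n+2)\ge 2/(\sqrt{n^2+8n}+n)=l_0(n)/(\sin\theta_0)^n$ to recover the stated bound. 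The paper's geometric intersection argument is more hands-on but buys nothing extra here; your graph-theoretic shortcut avoids the intersection-counting lemma entirely.

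Two minor remarks on your write-up: your monotonicity justification for $\cot\alpha+\cot\beta\le 2\cot\theta_0$ is overkill, since $\cot$ is decreasing on $(0,\pi)$ and $\alpha,\beta\ge\theta_0$ already suffices termwise; and in the chain argument you should state explicitly that the edge of length $\ell$ (respectively $L$) is taken as $e_0$ (respectively $e_{d+1}$), not merely the shortest (respectively longest) edge of $T_0$ (respectively $T_d$), though of course this does not affect the inequality.
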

\begin{proof}
Let $L$ be the length of a longest edge of $\bar{\tau}$ and let $[abc]$ be a triangle of $\bar{\tau}$ with the edge $[bc]$ of length $L$. The angles $\angle abc$ and $\angle acb$ are at least $\theta_0$ so a point $p$ can be chosen in $[abc]$ such that $[pbc]$ is an isosceles triangle that lies in $[abc]$ with $\angle pbc=\angle pcb=\theta_0$. The area of the isosceles triangle $[pbc]$ with base $[bc]$ of length $L$ and equal angles $\theta_0$ is $L^2\tan\theta_0/4$. This area is bounded by the area of triangle $[abc]$, which in turn is bounded by $A$. So we get $L \leq 2\sqrt{A\cot\theta_0}$ as required.\\

\begin{figure}
\centering
\def\svgwidth{0.9\columnwidth}
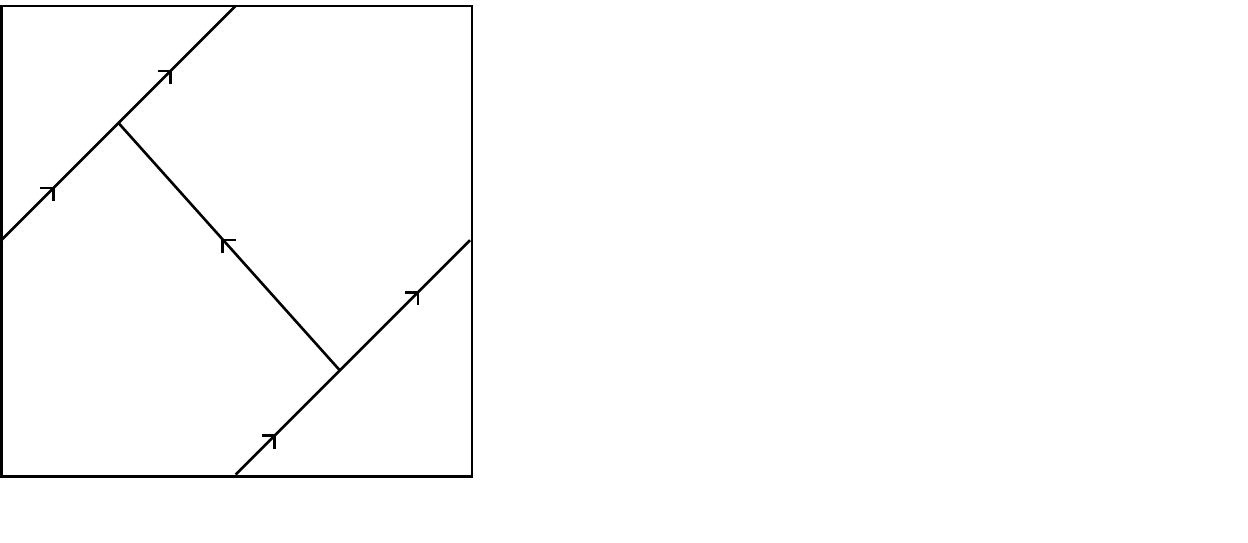
\caption{(a) $\alpha$ is a shortest closed curve in $T$ and $\beta$ is a geodesic from $\alpha(s)$ to $\alpha(0)$. (b) The triangle $t_1$ of $\bar{\tau}$ with a shortest edge $e$ of length $l$. As the angle subtended on the circle is $\theta_0$ and the angle at $p$ is greater than or equal to $\theta_0$ so $p$ lies in this disk of radius $r=l/(2\sin\theta_0)$.}\label{torusedgebounds_fig}
\end{figure}

Let $\alpha:[0,1]\to T$ be a shortest closed geodesic in $T$, which is given to be of unit length. We shall first show that $\alpha$ intersects each triangle of $\bar{\tau}$ in at most  $2L + 1$ components. See the example in Figure \ref{torusedgebounds_fig}(a). For $s\in (0, 1)$, let $\alpha_1=\alpha|_{[0,s]}$ and let $\alpha_2=\alpha|_{[s,1]}$. Let $\beta$ be a geodesic arc in $T$ from $\alpha(s)$ to $\alpha(0)$ which is different from $\bar{\alpha}_1$ and $\alpha_2$, where $\bar{\alpha}_1$ denotes the arc $\alpha_1$ in the reverse direction.  Then $\alpha_1 \star \beta$ and $\bar{\alpha}_2 \star \beta$ are non-trivial closed curves in $T$ and hence both have length at least 1, i.e., $l(\alpha_1) + l(\beta) \geq 1$ and $l(\alpha_2) + l(\beta) \geq 1$. 
These curves are non-trivial as otherwise $\alpha_1 \sim \bar{\beta}$ and $\alpha_2 \sim \beta$ but in the flat metric there exists a unique geodesic up to path-homotopy between any two points. The length of $\alpha$, $l(\alpha)=l(\alpha_1) + l(\alpha_2)=1$ so $l(\alpha_1)\leq 1/2$ or $l(\alpha_2)\leq 1/2$ and therefore $l(\beta) \geq 1/2$. 

Let $\gamma$ be a geodesic that intersects $\alpha$ $k$ times. Let $\beta$ be a segment of $\gamma$ between two consecutive intersection points with $\alpha$. By the above arguments, each such segment of $\gamma$ has length at least 1/2. There are at least $k-1$ such segments, so $(k-1)(1/2)\leq l(\gamma)$. If we assume that $l(\gamma)\leq L$ then we get $k \leq 2L+1$. So any geodesic segment of length at most $L$ intersects $\alpha$ at most $2L + 1$ times.

The geodesic $\alpha$ intersects each triangle $t$ of $\bar{\tau}$ in parallel segments $\delta_i$ inside $t$, so there exists an edge $e$ of $t$ which intersects every $\delta_i$. The length of edge $e$, $l(e) \leq L$ so taking $\gamma$ as $e$ in the above arguments, we can see that $\alpha$ intersects $t$ in at most $k\leq 2L+1$ components. \\

We will now obtain an upper bound for $L$ in terms of the length of the shortest edge.
Let $e$ be a smallest edge of $\bar{\tau}$, with length $l$. Let $t_1$ be a triangle of $\bar{\tau}$ containing edge $e$ and let $p$ be the vertex of $t_1$ opposite to $e$. All angles of $t_1$ are at least $\theta_0$ so $p$ lies in a disk with $e$ as a chord and $\theta_0$ the angle subtended on the boundary circle. Refer to the diagram in Figure \ref{torusedgebounds_fig}(b). A side length of $t_1$ is maximum when it is the diameter of this circle. The radius of this circle is $l/(2\sin\theta_0)$. So sides of $t_1$ have length at most $l/\sin\theta_0$. 

If $t_2$ is a triangle adjacent to $t_1$ then its shortest edge has length $l'\leq l/\sin\theta_0$. By a similar reasoning the lengths of its sides is bounded above by $l'/\sin\theta_0 \leq l/(\sin\theta_0)^2$. The Euclidean triangulation $\bar{\tau}$ has at most $n$ triangles so inductively an upper bound on its edge lengths in terms of $l$ is given by $L \leq l/(\sin\theta_0)^{n}$. To simplify notation, let $c=1/(\sin\theta_0)^n$, so that $L \leq cl$. Note that as $0< \theta_0\leq \pi/3$, so $c>0$.\\

The shortest geodesic $\alpha$ intersects each triangle in at most $2L+1$ components so $\alpha$ is divided into at most $(2L+1)n$ segments by the triangles of $\bar{\tau}$. The length of each such segment is bounded by the diameter of the corresponding triangle which is at most $L$. This gives $1=l(\alpha) \leq (2L+1)nL$. Using the inequality $L \leq cl$, we get the quadratic inequality

$$q(l)=(2nc^2)l^2 + (nc) l - 1 \geq 0$$

The roots of $q(l)$ are $x=(-n- \sqrt{n^2+8n})/(4nc)$ and $y=(-n+ \sqrt{n^2+8n})/(4nc)$. The coeffficient $2nc^2>0$, so $q(l)\geq 0$ for $l\in(-\infty, x]\cup[y, \infty)$ and $q(l)< 0$ for $l\in (x, y)$. As $x<0$ and $l$ is positive so we can conclude that 
$$l\geq y=\frac{(\sin\theta_0)^n(\sqrt{n^2+8n}-n)}{4n}$$

%As $(\sqrt{m^2+8m}-m)/(4m)$ is a decreasing function of $m$ so $l_0$ is a decreasing function of $m$.
\end{proof}

\begin{lemma}\label{circumrad}
Let $\mathcal{S}$ be the set of all Euclidean triangles with edge length at most $L_0$ and angles at least $\theta_0$. The circumradius of any triangle in $\mathcal{S}$ is at most $L_0/(2\sin\theta_0)$.
\end{lemma}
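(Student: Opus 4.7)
The plan is to apply the extended law of sines, which states that for a Euclidean triangle with side $a$ opposite an angle $A$, the circumradius $R$ satisfies $2R = a/\sin A$. Pick any side-angle pair $(a, A)$ of a triangle in $\mathcal{S}$. Since $a \le L_0$, the bound on $R$ will follow once we know $\sin A \ge \sin\theta_0$.

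The angle $A$ lies in $[\theta_0, \pi - 2\theta_0]$ because the other two angles are at least $\theta_0$ and sum to $\pi - A$. This is exactly the situation handled by Remark \ref{sinecomp}: although $A$ could be obtuse, if we had $\sin A < \sin\theta_0$ with $A \ge \theta_0$, then $A$ would have to exceed $\pi - \theta_0$, forcing the sum of the three angles to exceed $(\pi-\theta_0) + 2\theta_0 > \pi$, a contradiction. Hence $\sin A \ge \sin\theta_0$ for every angle of the triangle.

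Combining these two inputs gives
\[
R \;=\; \frac{a}{2\sin A} \;\le\; \frac{L_0}{2\sin\theta_0},
\]
which is the desired bound. There is no real obstacle here; the only subtle point is guarding against the possibility that an angle is obtuse (so that $\sin A$ need not be monotone in $A$), and this is precisely what the observation in Remark \ref{sinecomp} takes care of.
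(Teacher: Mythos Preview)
Your proof is correct and is essentially identical to the paper's own argument: both invoke the circumradius formula $R=a/(2\sin\theta)$ from the law of sines, bound the numerator by $L_0$, and bound $\sin\theta$ from below by $\sin\theta_0$ via Remark~\ref{sinecomp}.
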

\begin{proof}
Let $[pqr]$ be a Euclidean triangle with $l([pq])=a$ and $\angle r = \theta$. Then its circumradius is given by $a/(2\sin\theta)$. If $[pqr]\in \mathcal{S}$ then by Remark \ref{sinecomp}, $a/(2\sin\theta) \leq L_0/(2\sin\theta_0)$ as required.
\end{proof}

\begin{definition}
Let $\tau$ be an ideal triangulation of $M$ and let $pr: \H^3 \to M$ be a covering map that is normalised with respect to a cusp of $M$. A lift of $\tau$ with respect to $pr$ is an ideal triangulation $\lift{\tau}$ of $\H^3$ such that $pr$ is a simplicial map from $\lift{\tau}$ to $\tau$. A simplex of $\lift{\tau}$ is called vertical if one of its ideal vertices is $\infty$. We say $\lift{\tau}$ intersects the horosphere $X_z=\R^2 \times \{z\}$ vertically if every simplex of $\lift{\tau}$ that intersects $X_z$ is vertical.

Let $h_0(c)$ be the normalised cusp height of the cusp $c$. Let $z\geq h_0(c)$ be such that $X_z$ intersects $\lift{\tau}$ vertically. We will show in Lemma \ref{taunormalised_existence} that such a $z$ does exist. The infimum $h_0(c, \tau)$ of such $z \geq h_0(c)$ is called the $\tau$-normalised cusp height of $c$. We will show in Lemma \ref{taunormalised_uniqueness} that it does not depend on the choice of the normalised covering projection with respect to $c$.

Note that when $z> h_0(c,\tau)$ then $\bar{\tau} = \lift{\tau} \cap (\R^2 \times \{z\})$ is a Euclidean triangulation of $\R^2 \times \{z\} \subset \R^3$. Also, $\lift{\tau} \cap cl(H(z)) = \bar{\tau} \times [z, \infty)$, i.e., an $n$-simplex $\delta$ of $\lift{\tau} \cap cl(H(z))$ is the union of vertical geodesic rays that begin at points in some fixed $n-1$ simplex $\bar{\delta}$ of $\bar{\tau}$ and asymptotically end at $\infty$.
\end{definition}

We will show below that the $\tau$-normalised cusp height of a cusp $c$ does not depend on the choice of the normalised covering projection map with respect to $c$.
\begin{lemma}\label{taunormalised_uniqueness}
Let $pr_1: \H^3 \to M$ and $pr_2: \H^3 \to M$ be covering projections that are normalised with respect to a cusp $c$ of $M$. Let $\tau$ be an ideal triangulation of $M$. Let $\lift{\tau}_1$ and $\lift{\tau}_2$ be lifts of $\tau$ with respect to $pr_1$ and $pr_2$ respectively. Then the $\tau$-normalised cusp height of $c$ with respect to $\lift{\tau}_1$ is equal to the $\tau$-normalised cusp height of $c$ with respect to $\lift{\tau}_2$.
\end{lemma}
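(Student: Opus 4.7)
The plan is to leverage Lemma \ref{normalised_uniqueness} to reduce the statement to a symmetry argument. That lemma gives us an isometry $\phi$ of $\H^3$, which is either parabolic or elliptic fixing $\infty$, such that $pr_1 = pr_2 \circ \phi$. The two key features of such a $\phi$ are that it fixes $\infty$ (so it permutes the set of ideal simplices having $\infty$ as a vertex) and that it preserves each horoball centered at $\infty$, hence maps each horosphere $X_z = \R^2 \times \{z\}$ to itself by a Euclidean isometry. Moreover, by Lemma \ref{normalised_uniqueness} the normalised cusp height $h_0(c)$ with respect to $pr_1$ equals that with respect to $pr_2$, so the ranges $z \geq h_0(c)$ in the two definitions coincide.

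First, I would observe that a lift of $\tau$ with respect to a covering projection $pr$ is uniquely determined as a cell structure: the simplices of $\lift{\tau}$ are exactly the connected components of preimages of simplices of $\tau$. Applying this to $pr_1 = pr_2 \circ \phi$, I would conclude that $\phi(\lift{\tau}_1) = \lift{\tau}_2$, because $\phi$ carries the pullback of $\tau$ under $pr_1$ to the pullback of $\tau$ under $pr_2 = pr_1 \circ \phi^{-1}$.

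Next, I would verify that the notion of being vertical, and the notion of intersecting $X_z$ vertically, is preserved by $\phi$. A simplex $\delta$ of $\lift{\tau}_1$ is vertical iff $\infty$ is a vertex of $\delta$; since $\phi$ fixes $\infty$ and is simplicial from $\lift{\tau}_1$ to $\lift{\tau}_2$, the image $\phi(\delta)$ is vertical iff $\delta$ is. Also $\phi(X_z) = X_z$, so a simplex of $\lift{\tau}_2$ meets $X_z$ iff the corresponding simplex of $\lift{\tau}_1$ (its $\phi$-preimage) meets $X_z$.

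Putting these pieces together, for any $z \geq h_0(c)$, the horosphere $X_z$ intersects $\lift{\tau}_1$ vertically if and only if it intersects $\lift{\tau}_2$ vertically. The two sets whose infima define the $\tau$-normalised cusp heights for $\lift{\tau}_1$ and $\lift{\tau}_2$ therefore coincide, and hence so do the infima. The only subtle point is justifying uniqueness of the lift $\lift{\tau}$ once $\tau$ and $pr$ are fixed, and ensuring that $\phi$ is genuinely simplicial from $\lift{\tau}_1$ to $\lift{\tau}_2$; everything else is a direct consequence of the horoball-preserving nature of parabolic and elliptic isometries fixing $\infty$.
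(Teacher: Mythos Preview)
Your proposal is correct and shares its starting point with the paper: both invoke Lemma \ref{normalised_uniqueness} to obtain a parabolic or elliptic isometry $\phi$ fixing $\infty$ with $pr_1 = pr_2 \circ \phi$, and both use that $\phi$ preserves each horosphere $X_z$. The execution of the core step differs, however. You stay upstairs, arguing that the lift of $\tau$ under a fixed covering projection is unique, whence $\phi(\lift{\tau}_1) = \lift{\tau}_2$; since $\phi$ fixes $\infty$ and each $X_z$, verticality of intersection with $X_z$ transfers directly between the two lifted triangulations. The paper instead passes to $M$: it observes that $pr_1(X_z) = pr_2(X_z) = T_z$ is an embedded cusp torus, and that a simplex of a lift is vertical exactly when its image in $M$ meets $T_z$ orthogonally; since orthogonality to $T_z$ is an intrinsic property of $\tau$ in $M$, any non-vertical simplex of $\lift{\tau}_2$ projects to a non-orthogonal simplex of $\tau$, which in turn has a non-vertical lift in $\lift{\tau}_1$. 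Your route is a bit more direct but leans on the uniqueness of the lifted cell structure (which you rightly flag as the one point needing care); the paper's route sidesteps that by working with the intrinsic geometry at the cusp torus and never needs $\phi$ to be simplicial between the two lifts.
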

\begin{proof}
By Lemma \ref{normalised_uniqueness}, the maps $pr_1$ and $pr_2$ both send $H(h_0(c))$ to the cusp neighbhourhood $C$ of $c$ consisting of points with injectivity radius less than $\epsilon/2$. And there exists an isometry $\phi$ which is a parabolic or elliptic isometry fixing $\infty$ such that $pr_1 = pr_2 \circ \phi$. The isometry $\phi$ preserves horospheres $X_z = \R^2 \times \{z\}$ centered at $\infty$. So for $z > h_0(c)$, $pr_1(X_z) = pr_2(X_z)$ is a torus $T_z$ in $M$. We will prove that for any $z > h_0(c)$, $\lift{\tau}_1$ intersects $X_z$ vertically  if and only if $\lift{\tau}_2$ intersects $X_z$ vertically.

A geodesic $\gamma$ of $M$ is orthogonal to $T_z$ if and only if every lift of $\gamma$ with respect to $pr_1$ or with respect to $pr_2$ which intersects $X_z$, intersects $X_z$ orthogonally. Assume that $\lift{\tau}_1$ intersects $X_z$ vertically but $\lift{\tau}_2$ does not. Let $\lift{\delta}_2$ be an edge (or face) of $\lift{\tau}_2$ that intersects $X_z$ but is not vertical, i.e., the intersection is not an orthogonal intersection. Then projecting down via $pr_2$ we get an edge (or face) $\delta$ of $\tau$ that intersects $T_z$ non-orthogonally. And hence there exists an edge (or face) $\lift{\delta}_1$ of $\lift{\tau}_1$ that intersects $X_z$ non-orthogonally. This contradicts the assumption that $\lift{\tau}_1$ intersects $X_z$ vertically.
\end{proof}

We will next calculate an upper bound for the $\tau$-normalised cusp height of a cusp $c$ of $M$, where $\tau$ is any $\theta_0$-thick triangulation of $M$. This upper bound $z_0$ is independent of the choice of the $\theta_0$-thick triangulation.

\begin{lemma}\label{taunormalised_existence}
Let $M$ be a complete cusped orientable hyperbolic $3$-manifold with volume $V$. Let $\epsilon$ be a Margulis number for such manifolds. Let $c$ be a cusp of $M$ and let $\tau$ be a $\theta_0$-thick triangulation of $M$. Then,
$$h_0(c, \tau) \leq \frac{\sqrt{2V \cot\theta_0}}{\epsilon\sin\theta_0}$$
\end{lemma}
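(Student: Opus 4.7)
The plan is to fix a covering projection $pr : \H^3 \to M$ that is normalised with respect to $c$ and lift $\tau$ to an ideal triangulation $\lift{\tau}$ of $\H^3$. I will show that the critical height above which every simplex of $\lift{\tau}$ meeting the horosphere $X_z$ is vertical equals the maximum circumradius of the Euclidean triangles in the induced triangulation of the flat cusp torus, and then bound this circumradius using the lemmas already established.

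First, I analyse the geometry of vertical simplices. Each vertical tetrahedron $\Delta$ of $\lift{\tau}$ has vertices $\infty, a, b, c$ with $a, b, c \in \C$. Its three vertical $2$-faces are Euclidean half-planes above the chords $\overline{ab}$, $\overline{bc}$, $\overline{ca}$, each meeting $X_z$ in a horizontal segment of Euclidean length $|a-b|$, $|b-c|$, or $|c-a|$. Its fourth $2$-face $[abc]$ lies on the Euclidean hemisphere through $a,b,c$, whose top has height equal to the circumradius $R(\Delta)$ of the Euclidean triangle $abc$. The non-vertical tetrahedron on the other side of $[abc]$ is hyperbolically convex and lies entirely below this hemisphere, so every non-vertical simplex of $\lift{\tau}$ adjacent to $\infty$ lies at Euclidean height at most $R(\Delta)$ for some vertical tetrahedron $\Delta$. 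Hence if $z > R_{\max} := \sup_\Delta R(\Delta)$, where $\Delta$ ranges over vertical tetrahedra of $\lift{\tau}$, then $X_z$ lies above all non-vertical simplices, so $X_z$ meets $\lift{\tau}$ vertically and $h_0(c,\tau) \leq R_{\max}$.

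Second, I relate $R_{\max}$ to the cusp torus triangulation. For any $z > R_{\max}$, the intersection $\lift{\tau} \cap X_z$ descends under $pr$ to a Euclidean triangulation $\bar{\tau}$ of the flat cusp torus $T_z$, with one triangle per $\Gamma_\infty$-orbit of vertical tetrahedra. The triangle corresponding to a vertical tetrahedron $\Delta$ with finite vertices $a,b,c$ is congruent in the Euclidean metric on $X_z$ to the triangle $abc \subset \C$, so it has circumradius $R(\Delta)$. Its vertex angles equal the dihedral angles of $\Delta$ at the three vertical edges: the two vertical half-planes meeting along the edge $[\infty, a]$ cut $X_z$ in two rays from the point of $X_z$ above $a$ whose planar angle equals the dihedral angle between the half-planes. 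Since $\tau$ is $\theta_0$-thick, $\bar{\tau}$ is a $\theta_0$-thick Euclidean triangulation. Vertical projection from $X_z$ to $X_0 = \R^2 \times \{0\}$ is a Euclidean isometry commuting with $\Gamma_\infty$, so $\bar{\tau}$ descends equally well to a $\theta_0$-thick Euclidean triangulation of the normalised flat torus $T_0$, which has shortest closed Euclidean geodesic of unit length and area $A_0(c) \leq 2V/\epsilon^2$ by Lemma \ref{cuspbounds}.

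Third, I apply the edge and circumradius bounds. By Lemma \ref{torusedgebounds}, the edges of $\bar{\tau}$ on $T_0$ have length at most
$$ L_0(A_0(c)) \;=\; 2\sqrt{A_0(c)\cot\theta_0} \;\leq\; (2/\epsilon)\sqrt{2V\cot\theta_0}. $$
By Lemma \ref{circumrad}, each triangle of $\bar{\tau}$ has circumradius at most $L_0(A_0(c))/(2\sin\theta_0)$, so
$$ h_0(c,\tau) \;\leq\; R_{\max} \;\leq\; \frac{L_0(A_0(c))}{2\sin\theta_0} \;\leq\; \frac{\sqrt{2V\cot\theta_0}}{\epsilon\,\sin\theta_0}, $$
which is the required bound. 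The main obstacle is setting up the geometric dictionary in the first two steps, namely matching heights of non-vertical faces with circumradii of the Euclidean triangles on the horotorus, and verifying that dihedral angles of $\tau$ at vertical edges become the planar angles of $\bar{\tau}$; once this is in place the conclusion follows mechanically by chaining Lemmas \ref{cuspbounds}, \ref{torusedgebounds}, and \ref{circumrad}.
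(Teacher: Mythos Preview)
Your argument is essentially the same as the paper's: lift to a normalised cover, identify the height of each non-vertical face with the circumradius of its Euclidean shadow, then chain Lemmas \ref{cuspbounds}, \ref{torusedgebounds}, and \ref{circumrad} to bound that circumradius by $z_0 = \sqrt{2V\cot\theta_0}/(\epsilon\sin\theta_0)$.

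There is one small step you omit that the paper includes. By definition, $h_0(c,\tau)$ is the infimum of those $z \geq h_0(c)$ for which $X_z$ meets $\lift{\tau}$ vertically, so your inequality $h_0(c,\tau)\leq R_{\max}$ is only valid once you know $R_{\max}\geq h_0(c)$; otherwise $h_0(c,\tau)=h_0(c)$ and you still need $z_0\geq h_0(c)$ to conclude. The paper handles this by checking directly that $z_0>1/\epsilon\geq h_0(c)$ (using $V\geq 2v_{tet}>2$ and the monotonicity of $\sqrt{\cot\theta_0}/\sin\theta_0$ on $(0,\pi/3]$ together with Lemma \ref{cuspbounds}). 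Adding this one-line verification completes your proof.
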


\begin{proof}
Let $pr:\H^3 \to M$ be a covering projection that is normalised with respect to the cusp $c$. Let $\lift{\tau}$ be a lift of $\tau$ to $\H^3$ with respect to $pr$. Let $abc$ denote a non-vertical face of $\lift{\tau}$ with vertices $a$, $b$, $c$ in $\R^2 \times \{0\}$. The face $abc$ lies on a hemisphere with center on the plane $\R^2\times \{0\}$ and radius equal to the circumradius of the Euclidean triangle $[abc]$ in $\R^2 \times \{0\}$. We call this circumradius the circumradius of the face $abc$. The maximum Euclidean height reached by $abc$ is bounded by the circumradius of $abc$.

We first show that there exists some $z>h_0(c)$ such that $X_z = \R^2 \times \{z\}$ intersects $\lift{\tau}$ vertically. Let $\Gamma_\infty$ denote the cusp group of $c$. Take $z>h_0(c)$ and let $T_z=pr(X_z)=X_z/\Gamma_\infty$ be a cusp torus in the thin cusp neighbourhood $C$ of $c$. As $\tau$ is a finite triangulation and each simplex of $\tau$ intersects $T_z$ finitely many times, so  $X_z$ is intersected by finitely many simplexes $\delta_i$ of $\lift{\tau}$ along with all their translates by $\Gamma_\infty$. The cusp group $\Gamma_\infty$ is generated by parabolic isometries (which induce Euclidean isometries on $\R^2 \times \{0\}$), so the circumradii of non-vertical faces of $\delta_i$ are unchanged under the action of $\Gamma_\infty$. By taking $z>h_0(c)$ larger than the circumradii of these finitely many non-vertical faces of $\delta_i$, we may assume that $\lift{\tau}$ intersects $X_z$ vertically. In particular, $X_z\cap \lift{\tau}$ induces a $\theta_0$-thick Euclidean triangulation of $X_z$. Orthogonally projecting it down to $\R^2 \times \{0\}$ gives a $\theta_0$-thick Euclidean triangulation $\bar{\tau}$ of $\R^2 \times \{0\}$. The lifted triangulation $\lift{\tau}$ is invariant under deck transformations so $\bar{\tau}/\Gamma_\infty$ gives a Euclidean triangulation of the normalised flat cusp torus $T_0=\R^2 \times \{0\}/\Gamma_\infty$. 

Let $L_0$ be the length of the longest edge of $\bar{\tau}$ and let $A_0(c)$ denote the Euclidean area of $T_0$. By combining the bounds of Lemma \ref{circumrad}, Lemma \ref{torusedgebounds} and Lemma \ref{cuspbounds}, the circumradius of a triangle of $\bar{\tau}$ is less than
$$\frac{L_0}{2\sin\theta_0} \leq \frac{2 \sqrt{A_0(c) \cot\theta_0}}{2\sin\theta_0} \leq \frac{\sqrt{2V\cot\theta_0}}{\epsilon\sin\theta_0}$$

Let $z_0 = \sqrt{2V\cot\theta_0}/(\epsilon\sin\theta_0)$. Then for any $z\geq z_0$, $X_z$ intersects $\lift{\tau}$ vertically. Note that $g(\theta_0)=\sqrt{\cot\theta_0}/\sin\theta_0$ is a decreasing function of $\theta_0$ in $(0, \pi/2)$. The angle $\theta_0\in (0,\pi/3]$ so the minimum value for $g(\theta_0)$ is $g(\pi/3)>0.8$. The minimum volume of a cusped hyperbolic manifold is the volume of the Figure-Eight knot complement, which is $2v_{tet}$ where $v_{tet} >1$ is the volume of the regular ideal hyperbolic tetrahedron\cite{CaoMey}. Therefore $z_0 > 2(0.8)/\epsilon> 1/\epsilon$ and by Lemma \ref{cuspbounds}, $1/\epsilon \geq h_0(c)$. And so $h_0(c, \tau) \leq z_0$.
\end{proof}

\begin{definition}
Let $pr:\H^3 \to M$ be a normalised covering projection with respect to a cusp $c$ of $M$. Let $\tau$ be a $\theta_0$-thick triangulation of $M$. For any $z>h_0(c, \tau)$, $T_z = pr(\R^2 \times \{z\})$ is an embedded torus in $M$ (as $h_0(c, \tau) \geq h_0(c)$). Note that the height $z$ does not depend on the choice of the normalised covering projection with respect to $c$.
The hyperbolic metric of $M$ induces a Euclidean metric on $T_z$ and $\bar{\tau}_z=\tau \cap T_z$ is a Euclidean triangulation of $T_z$. If $C(z)$ denotes the cusp neighbourhood of $M$ bounded by $T_z$ and $cl(C(z))$ its closure, then $\tau \cap cl(C(z))$ is the triangulation given by coning $\bar{\tau}$ with the cusp point $c$. In other words, an $n$-simplex $\delta$ of $\tau \cap cl(C(z))$ is the union of geodesic rays perpendicular to $T_z$ that begin at points in some fixed $n-1$ simplex $\bar{\delta}$ of $\bar{\tau}$ and asymptotically end at the cusp.
\end{definition}

\begin{remark}\label{constants}
Let $\tau$ be a geometric ideal $\theta_0$-thick triangulation of a cusped hyperbolic manifold $M$ with $k$ cusps and $m$ tetrahedra. Let $z \geq z_0$ and let $X_z = \R^2 \times \{z\}$. Let $pr: \H^3 \to M$ be a covering projection which is normalised with respect to a cusp $c$ of $M$. Then $T_z=pr(X_z)$ is a cusp torus in the thin cusp neighbourhood $C$ of $c$ in $M$. The induced metric on $T_z$ is Euclidean and the triangulation $\tau$ induces a Euclidean triangulation $\bar{\tau}$ on $T_z$. Each tetrahedron of $\tau$ intersects $T_z$ in at most $4$ triangles so the number of triangles in $\bar{\tau}$ is at most $4m$. Let $\lift{\tau}$ be a lift of $\tau$ with respect to $pr$. The triangulation $\lift{\tau} \cap X_z$ is invariant under $\Gamma_\infty$, so orthogonally projecting to $\R^2 \times \{0\}$ we can identify $\bar{\tau}$ with a Euclidean triangulation of the flat torus $T_0 = \R^2 \times \{0\}/\Gamma_\infty$. Taking $n= 4m$ in Lemma \ref{torusedgebounds} we get the lower bound on the Euclidean lengths of edges of $\bar{\tau}$ in $T_0$ as 
$$l_0 = \frac{(\sin \theta)^{4m}(\sqrt{16m^2+32m}-4m)}{16m} = \frac{(\sin \theta)^{4m}(\sqrt{m^2+2m}-m)}{4m}$$

The volume of any ideal hyperbolic tetrahedron is at most $v_{tet}$, the volume  of  the regular ideal tetrahedron. So putting $V \leq mv_{tet}$ in Lemma \ref{taunormalised_existence} we can take the upper bound on $h_0(c, \tau)$ to be

$$z_0=\frac{\sqrt{2m v_{tet} \cot\theta_0}}{\epsilon\sin\theta_0}$$
 
And finally putting $V \leq mv_{tet}$ in Lemma \ref{cuspbounds} we get a bound on the area of the flat cusp torus to be
$$A_0 = 2mv_{tet}/\epsilon^2$$

For any geometric ideal $\theta_0$-thick triangulation with $m$ tetrahedra, we fix these as the values for $l_0$, $z_0$ and $A_0$ for the rest of this paper. Note that all these constants are independent of the choice of the cusp and the normalised covering map. 

We also fix the following notations: Let $pr_i$ be a covering projection that is normalised with respect to the $i$-th cusp. Let $P_i(u, v)$ be the parallelogram in $\R^2 \times \{0\}$ spanned by the vectors which generate the normalised presentation of the cusp group $\Gamma_\infty$. Let $\lift{C}_i(z_0) = \{(x, y, z)\in \H^3: (x, y) \in P_i(u, v), z> z_0\}$.  Let $C_i(z_0) = pr_i(\lift{C}_i(z_0))=pr_i(H(z_0))$ and let $C(z_0) = \cup_{i=1}^k C_i (z_0)$. Then $M(z_0) = M \setminus C(z_0)$ is a compact hyperbolic manifold with flat tori boundary such that $\tau$ induces a Euclidean triangulation on $\del M(z_0)$.
\end{remark}

\subsection{Lower bound on systole length}
Let $\tau$ be a $\theta_0$-thick triangulation of $M$. The aim of this subsection is to  obtain a lower bound on the systole length of $M$ (Theorem \ref{mainthm3}).

The following simple calculation will be used repeatedly:
\begin{lemma}\label{distfromline}
Let $p=(x, y)\in \H^2$ and let $Y=\{(0,y): y>0\}$. Then the distance between $p$ and $Y$ in $\H^2$ is given by  $d(p, Y) = \arcsinh(x/y)$.

\end{lemma}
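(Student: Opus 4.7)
The plan is to reduce this to a standard one-variable minimization using the explicit hyperbolic distance formula in the upper half-plane model, namely
$$\cosh(d(p_1, p_2)) = 1 + \frac{(x_1 - x_2)^2 + (y_1 - y_2)^2}{2 y_1 y_2}$$
for points $p_i = (x_i, y_i) \in \H^2$. First I would parametrize an arbitrary point of $Y$ as $q = (0, t)$ with $t > 0$ and consider the function
$$f(t) = 1 + \frac{x^2 + (y-t)^2}{2 y t}.$$
Since $\cosh$ is monotonic on $[0, \infty)$, minimizing $d(p,q)$ is equivalent to minimizing $f(t)$ over $t>0$.

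A routine calculation (setting $f'(t)=0$ and simplifying) gives the optimum at $t_\ast = \sqrt{x^2 + y^2}$. This matches the geometric picture: $Y$ is itself a geodesic, and the unique geodesic through $p$ meeting $Y$ orthogonally is the semicircle centered at the origin with radius $\sqrt{x^2+y^2}$, which meets $Y$ precisely at $(0, t_\ast)$.

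Substituting $t_\ast$ back into $f$ and simplifying (expanding $(y - \sqrt{x^2+y^2})^2$ and collecting terms) yields
$$\cosh(d(p, Y)) = \frac{\sqrt{x^2+y^2}}{y} = \sqrt{1 + (x/y)^2}.$$
Since $\cosh(\arcsinh(u)) = \sqrt{1+u^2}$ and $\arcsinh$ is the inverse of $\sinh$ on $[0,\infty)$, we conclude $d(p, Y) = \arcsinh(x/y)$ (with the understanding that $x$ is taken to be the unsigned horizontal displacement, as is the relevant case when this lemma is later applied to distances from vertical geodesics in the paper's setup).

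There is no real obstacle: the proof is a direct calculus exercise. An equally clean alternative would be to integrate the arc-length form $d\theta/\sin\theta$ along the perpendicular semicircle from $\theta_p = \arctan(y/x)$ to $\pi/2$, obtaining $\log|\csc\theta_p + \cot\theta_p|$ which simplifies to the same $\arcsinh(x/y)$; I would likely just present the algebraic route above as it is the shortest.
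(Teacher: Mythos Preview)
Your argument is correct. The minimization of $f(t)=1+\dfrac{x^2+(y-t)^2}{2yt}$ indeed gives $t_\ast=\sqrt{x^2+y^2}$, and the substitution yields $\cosh(d(p,Y))=\sqrt{1+(x/y)^2}$, from which the formula follows via $\cosh(\arcsinh u)=\sqrt{1+u^2}$.

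The paper takes the second of the two routes you sketch rather than the first: it identifies the perpendicular geodesic as the semicircle of radius $r=\sqrt{x^2+y^2}$ centered at the origin and then integrates the hyperbolic arc-length element directly, obtaining
\[
d(p,Y)=\int_0^x \frac{r}{r^2-x^2}\,dx=\arctanh\!\left(\frac{x}{r}\right)=\arcsinh\!\left(\frac{x}{y}\right).
\]
So the paper's approach is geometric (locate the foot of the perpendicular first, then compute the length), whereas your primary approach is algebraic (use the global distance formula and optimize). Your method has the advantage of not needing to know in advance that the minimizing geodesic is the orthogonal semicircle; the paper's method has the advantage of avoiding the distance formula and the derivative computation. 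Both are short and equally acceptable here.
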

\begin{proof}
The shortest geodesic from $p$ to $Y$ is the arc of a circle through $p$ perpendicular to both the $x$-axis and $Y$, so it is the geodesic segment $\gamma$ joining $p$ and $(0,r)$ where $r=\sqrt{x^2 + y^2}$. The distance between these points is given by 

$$
d(p, Y)= \int_{\gamma}\frac{\sqrt{1+(dy/dx)^2}}{y} dx = \int_{0}^{x} \frac{r}{r^2-x^2} dx = \arctanh\left(\frac{x}{r}\right)
= \arcsinh\left(\frac{x}{y}\right)
$$

\end{proof}

We give here some definitions of standard terms in combinatorial topology which we extend to geometric triangulations of hyperbolic manifolds.

\begin{definition}\label{stardef}
Let $\lift{\tau}$ be a geometric triangulation of $\H^3$ and let $\lift{A}$ and $\lift{B}$ be disjoint simplexes of $\lift{\tau}$. We define their join $\lift{A} \star \lift{B}$ as the simplex obtained by taking the union of all geodesics joining points in $\lift{A}$ with points in $\lift{B}$. We define the link of $\lift{A}$ in $\lift{\tau}$ as the union of ideal simplexes $\lift{B} \in \lift{\tau}$ such that $\lift{A} \cap \lift{B}=\phi$ in $\H^3 \cup \del \H^3$ (i.e., they are disjoint in $\H^3$ and do not not have a common ideal vertex in $\del \H^3$) and  $\lift{A} \star \lift{B}$ is an ideal 3-simplex in $\lift{\tau}$. We denote the link of $\lift{A}$ in $\lift{\tau}$ as $lk(\lift{A}, \lift{\tau})$. The (closed) star of $\lift{A}$ in $\lift{\tau}$ is defined by $star(\lift{A}, \lift{\tau})=\lift{A} \star lk (\lift{A}, \lift{\tau})$. The open star of $\lift{A}$ in $\lift{\tau}$ is the interior of $star(\lift{A}, \lift{\tau})$ and is denoted by $instar(\lift{A}, \lift{\tau})$.

Let $pr: \H^3 \to M$ be a covering map. Let $\tau$ be a triangulation of $M$ and let $\lift{\tau}$ be an ideal triangulation of $\H^3$ that is sent to $\tau$ by $pr$. The link, star and open star of a simplex $A$ in $\tau$ is defined respectively by $lk (A, \tau) = pr(lk(\lift{A}, \lift{\tau}))$, $star(A, \tau)=pr(star(\lift{A}, \lift{\tau}))$ and $instar(A, \tau)=pr(instar(\lift{A}, \lift{\tau})))$, where $\lift{A}$ is some lift of $A$ to $\lift{\tau}$. When the triangulation $\tau$ is unambiguous we drop it from the notation and just refer to links, stars and open stars of $A$ as $lk(A)$, $star(A)$ and $instar(A)$. The metric on the links, stars and open stars of $A$ is the subspace metric induced from the hyperbolic metric on $M$. We call a geometric ideal triangulation $\tau$ of $M$ simplicial if for every simplex $A$ of $\tau$, $pr:star(\lift{A}, \lift{\tau}) \to star(A, \tau)$ is an isometry.
\end{definition}

Ideal geometric triangulations may not in general be simplicial. For example, the Gieseking manifold $M_G$ is a cusped non-orientable complete hyperbolic $3$-manifold obtained by identifying the faces of a regular ideal tetrahedron in pairs. It therefore has an ideal triangulation $\tau$ consisting of one tetrahedron, two faces and one edge $E$. The link of $\lift{E}$ in $\lift{\tau}$ is a circle made up of edges all of which are lifts of $E$. So $lk(E)=E$ and $star(E)=M_G$.\\

In the next two lemmas we calculate lower bounds on the injectivity radius of points in $M(z_0)=M \setminus \cup_i C_i(z_0)$ that lie in the $2$-skeleton of $\tau$. Recall that $C_i(z_0)=pr_i(H(z_0))$ in $M$, where $pr_i$ is a covering projection that is normalised with respect to the $i$-th cusp. We will use the notation $d$ for the hyperbolic distance in $\H^3$ and in $M$. For $p \in M$ (or $p \in \H^3$) we will use the notation $N(p, r)$ for the set of points $q$ in $M$ (or $\H^3$) such that $d(p, q) <r$. 

\begin{lemma}\label{edgeball}
Let $\tau$ be a geometric ideal $\theta_0$-thick triangulation of $M$ with $m$ tetrahedra. Let $l_0$ and $z_0$ be as in Remark \ref{constants} and let  $a_0=\arcsinh(l_0\sin\theta_0/z_0)$. Let $E$ be an edge of $\tau$ and let $p$ be a point of $E\cap M(z_0)$. Let $\lift{\tau}$ be a lift of the triangulation $\tau$ to $\H^3$ with respect to some covering projection. If $\lift{E}$ is a lift of $E$ in $\lift{\tau}$ and $\lift{p}$ is the lift of $p$ in $\lift{E}$, then $N(\lift{p}, a_0)$ is an embedded ball in $instar(\lift{E})$ and $N(p, a_0/2)$ is an embedded ball in $M$.
\end{lemma}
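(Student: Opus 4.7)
The plan is to reduce the first claim to a lower bound on the hyperbolic distance from $\lift p$ to every face of $\lift \tau$ on the topological frontier of $star(\lift E, \lift \tau)$, and then to deduce the embeddedness in $M$ by a standard injectivity-radius argument. Applying a hyperbolic isometry of $\H^3$, I may assume $\lift E$ is the vertical geodesic from $v = 0 \in \C$ to $\infty$ and that the corresponding covering projection $pr$ is normalised with respect to the cusp of $M$ at the ideal endpoint $\infty$. Since $p \in M(z_0)$, the height $z_p$ of $\lift p$ then satisfies $z_p \leq z_0$. Every tetrahedron of $star(\lift E, \lift \tau)$ has the form $[\infty, v, w_i, w_{i+1}]$ for consecutive vertices in $lk(\lift E)$, so the boundary faces of the star are either \emph{vertical} triangles $[\infty, w_i, w_{i+1}]$ or \emph{bottom} triangles $[v, w_i, w_{i+1}]$.

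For a vertical boundary face $F = [\infty, w_i, w_{i+1}]$: the Euclidean triangle $[v, w_i, w_{i+1}] \subset \C$ is one triangle of the $\theta_0$-thick Euclidean triangulation induced on the normalised cusp torus at $\infty$, so its sides have Euclidean length at least $l_0$ and all its angles are at least $\theta_0$. By Remark~\ref{sinecomp}, the Euclidean altitude from $v$ to the opposite side $[w_i, w_{i+1}]$ is therefore at least $l_0 \sin\theta_0$. Since $F$ lies on the vertical half-plane above the supporting line of $[w_i, w_{i+1}]$, Lemma~\ref{distfromline} applied in the vertical $\H^2$-slice through $\lift p$ perpendicular to this half-plane yields
$$ d(\lift p, F) \;\geq\; \arcsinh\!\bigl(l_0 \sin\theta_0 / z_p\bigr) \;\geq\; \arcsinh\!\bigl(l_0 \sin\theta_0 / z_0\bigr) \;=\; a_0. $$

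For a bottom boundary face $F = [v, w_i, w_{i+1}]$: I would apply an isometry $\phi$ of $\H^3$ with $\phi(v) = \infty$ chosen so that $pr \circ \phi^{-1}$ is the covering projection normalised with respect to the cusp of $M$ at $v$. In the new picture $\phi(F)$ is vertical, and the hypothesis $p \in M(z_0)$ forces $\phi(\lift p)$ to lie on the new vertical axis at height at most $z_0$, using that the constants $l_0$ and $z_0$ of Remark~\ref{constants} are independent of the cusp. The vertical-face computation applied in the new picture then gives $d(\phi(\lift p), \phi(F)) \geq a_0$, and since $\phi$ is an isometry $d(\lift p, F) \geq a_0$ as well. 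These two cases together force $N(\lift p, a_0)$ to be disjoint from every boundary face of $star(\lift E)$, and by connectedness $N(\lift p, a_0) \subset instar(\lift E)$.

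For the embeddedness of $N(p, a_0/2)$ in $M$, suppose distinct $\tilde q_1, \tilde q_2 \in N(\lift p, a_0/2)$ had the same image under $pr$, so $\gamma(\tilde q_1) = \tilde q_2$ for some nontrivial deck transformation $\gamma$. The triangle inequality then gives $d(\lift p, \gamma(\lift p)) < a_0$, hence $\gamma(\lift p) \in instar(\lift E)$. Since $\gamma(\lift p)$ lies on the edge $\gamma(\lift E)$ of $\lift \tau$ and $\lift E$ is the unique edge of $\lift \tau$ meeting the open star of $\lift E$, we must have $\gamma(\lift E) = \lift E$; because $M$ is orientable and $E$ is an arc rather than a closed curve, this forces $\gamma(\lift p) = \lift p$ and $\gamma = \mathrm{id}$, a contradiction. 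The principal obstacle in the whole argument is the bottom-face case, which demands that one treat the two cusps incident to $E$ symmetrically and exploit the cusp-independence of the constants of Remark~\ref{constants}.
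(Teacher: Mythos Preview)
Your argument for the first claim is correct and follows the same strategy as the paper: reduce to a Euclidean altitude computation in the induced cusp triangulation after normalising so that the relevant ideal vertex sits at~$\infty$. The only organisational difference is that the paper first locates the \emph{closest} boundary face $\lift F$ of $star(\lift E)$, observes that $\lift F$ must share one ideal vertex $u$ with $\lift E$, and then normalises once with respect to the cusp at $u$; you instead normalise at one end first, handle the vertical boundary faces directly, and re-normalise at the other end for the bottom faces. Both arrive at the same inequality $d(\lift p,\partial\,star(\lift E))\ge \arcsinh(l_0\sin\theta_0/z_0)=a_0$ via Lemma~\ref{distfromline} and the cusp-independence of $l_0,z_0$ noted in Remark~\ref{constants}.

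For the second claim you take a genuinely different route. The paper argues by constructing a short homotopically nontrivial loop in $M$ through $p$ and lifting it to produce a second lift of $p$ within $a_0$ of $\lift p$, contradicting the first part. Your argument is cleaner: a putative deck transformation $\gamma$ with $d(\lift p,\gamma\lift p)<a_0$ forces $\gamma\lift p\in instar(\lift E)$, and since $\lift E$ is the unique edge of $\lift\tau$ meeting that open star you get $\gamma(\lift E)=\lift E$, whence $\gamma=\mathrm{id}$. One small caveat: your justification ``because $M$ is orientable and $E$ is an arc'' is not quite the right reason. Orientability plays no role. What you actually need is that (i) if $\gamma$ swaps the ideal endpoints of $\lift E$ then it fixes a point of $\lift E$, contradicting freeness of the deck action, and (ii) if $\gamma$ fixes both endpoints then it is loxodromic with axis $\lift E$, impossible since those endpoints are parabolic fixed points and in a torsion-free Kleinian group no loxodromic shares a fixed point with a parabolic. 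With that clarification your argument is complete, and it trades the paper's elementary loop construction for a short appeal to the structure of discrete subgroups of $PSL(2,\C)$.
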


\begin{proof}
\begin{figure}
\centering
\def\svgwidth{0.9\columnwidth}
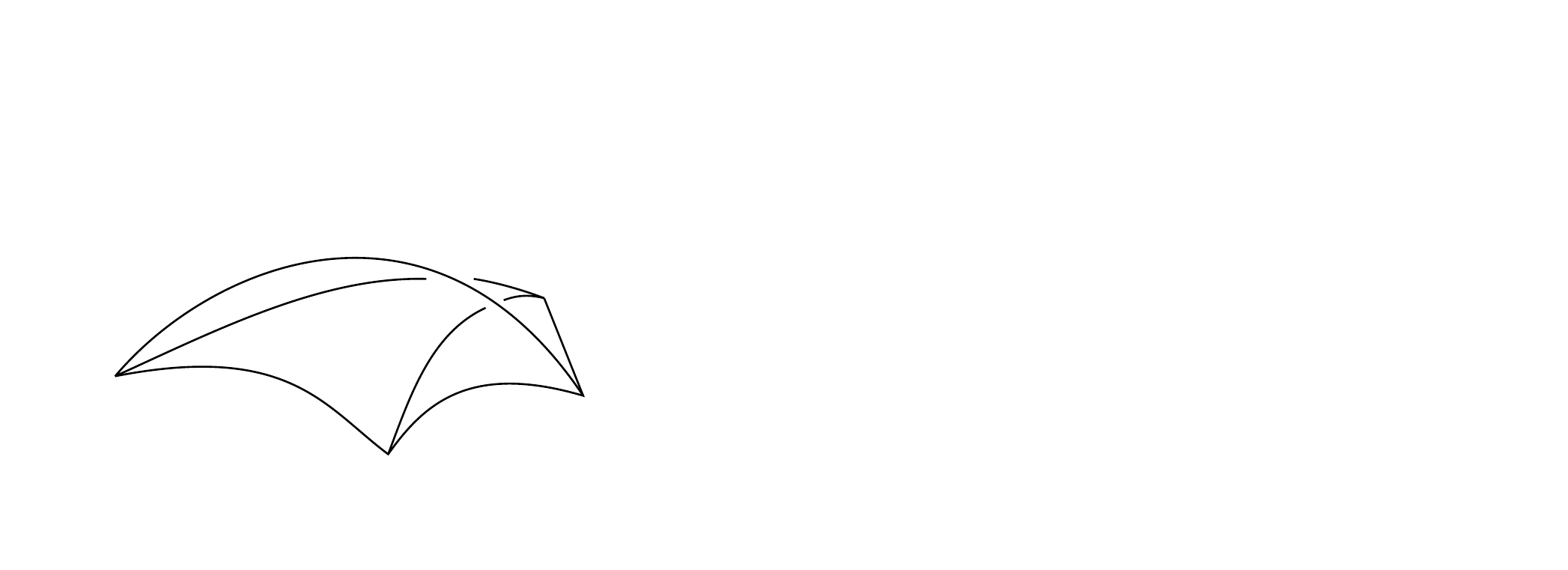
\caption{(a) $\lift{E}=[uv]$ is a lift of $E$ in $\lift{\tau}$ and $\lift{s}\in \lift{F}$ is the closest point to $\lift{p}\in \lift{E}$ in $\del star(\lift{E})$. (b) $\lift{E}'=[v'\infty]$ is a lift of $E$ in $\lift{\tau}'$ and $\lift{s}' \in \lift{F}'$ is the closest point to $\lift{p}'$ in $\del star(\lift{E}')$. }\label{edgeball_fig}
\end{figure}

Let $\pi: \H^3 \to M$ be a covering map and let $\lift{\tau}$ be a lift of $\tau$ to $\H^3$ with respect to $\pi$. Let $\lift{E}$ and $\lift{p}\in \lift{E}$ be the lifts of $E$ and $p$ in $\lift{\tau}$. See Figure \ref{edgeball_fig}(a) for a diagram. The star of $\lift{E}$ in $\lift{\tau}$ is a closed ball. Let $\lift{s}$ be a closest point of $\del star(\lift{E})$ to $\lift{p}$. Let $\lift{F}$ be the ideal 2-simplex of $\del star(\lift{E})$ which contains $\lift{s}$. Let $\lift{E}=[uv]$ (for $u, v \in \del \H^3$) then either $u$ or $v$ is an ideal vertex of $\lift{F}$. Assume that $u$ is an ideal vertex of $\lift{F}$ and let $\lift{\Delta}$ be the simplex of $\lift{\tau}$ that contains $\lift{E}$ and $\lift{F}$. 
If $\pi(u)$ is the cusp $c$ of $M$, then choose a covering projection $pr: \H^3 \to M$ which is normalised with respect to $c$. Let $\lift{\tau}'$ be the lift of $\tau$ with respect to $pr$. Let $\lift{E}'$ be a lift of $E$ with one endpoint at $\infty$. Let $\lift{F}'$, $\lift{p}'$ and $\lift{s}'$ be the corresponding lifts in the simplex $\lift{\Delta}'$ of $\lift{\tau}'$. Therefore both $\lift{E}'$ and $\lift{F}'$ are vertical simplexes of $\lift{\tau}'$  in $\H^3$. See Figure \ref{edgeball_fig}(b) for a diagram. 

For $z\geq z_0\geq h(c, \tau')$, let $X_z = \R^2 \times \{z\}$. By definition of $h(c, \tau')$, the induced triangulation $\lift{\tau}' \cap X_z$ of $X_z$ is a Euclidean triangulation that is invariant under the action of the cusp group $\Gamma_\infty$. Let $\sigma: \R^3 \to \R^2 \times \{0\}$ be the orthogonal projection $(x, y, z) \to (x, y, 0)$. Let $\bar{\tau} = \sigma(\lift{\tau}'\cap X_z)/\Gamma_\infty$  be the induced Euclidean triangulation of the the normalised flat cusp torus $T_0 = \R^2 \times \{0\}/\Gamma_\infty$. By Lemma \ref{torusedgebounds}, the Euclidean length of edges of $\bar{\tau}$ is bounded below by $l_0$.

Let $\bar{p}=\sigma(\lift{E}' \cap X_z)/\Gamma_\infty$ be a vertex of $\bar{\tau}$ and let $[\bar{q}\bar{r}]= \sigma(\lift{F}' \cap X_z)/\Gamma_\infty$ be an edge of $\bar{\tau}$. The lift $\lift{\Delta}'$ of $\Delta$ under $pr$ is the 3-simplex of $\lift{\tau}'$ containing $\lift{E}'$ and $\lift{F}'$ so $\lift{\Delta}' \cap X_z$ orthogonally projects down in $\R^2 \times \{0\}$ to the triangle $[\bar{p}\bar{q}\bar{r}]$ of $\bar{\tau}$. Let $\bar{d}$ denote the Euclidean distance in the flat torus $T_0$. The Euclidean distance $\bar{d}(\bar{p}, [\bar{q}\bar{r}]) \geq l_0\sin\theta_0$ as the angle at $\bar{q}$ is at least $\theta_0$ and $l([\bar{p}\bar{q}]) \geq l_0$.  Let $z(\lift{p}')$ denote the $z$-component of $\lift{p}'$. The point $p$ lies in $M(z_0)$ so $z(\lift{p}') \leq z_0$. Let $H$ be the vertical geodesic plane containing $\lift{p}'$ and $\lift{s}'$. Let $\bar{s}=\sigma(\lift{s}') \in [\bar{q}\bar{r}]$. Let $Y \supset H \cap \lift{F}'$ be the vertical geodesic in $H$ through $\lift{s}'$. The plane $H$ is isometric to $\H^2$ so by Lemma \ref{distfromline} we get $d(\lift{p}', \del star(\lift{E}'))=d(\lift{p}', \lift{s}')= d(\lift{p}', Y) \geq \arcsinh(x/ z(\lift{p}'))$ where $x=\bar{d}(\bar{p}, \bar{s})$ is the horizontal distance between $\lift{p}'$ and $Y$. The ratio $x/z(\lift{p}') \geq \bar{d}(\bar{p}, [\bar{q}\bar{r}])/z_0 \geq l_0\sin\theta_0/z_0$. The function $\arcsinh$ is increasing so $d(\lift{p}', \del star(\lift{E}')) \geq \arcsinh(l_0\sin\theta_0/ z_0)=a_0$. So $N(\lift{p}', a_0)$ is an embedded ball in $instar(\lift{E}')$. In particular the distance between $\lift{p}'$ and any edge of $\lift{\tau}$ other than $\lift{E}'$ is greater than or equal to $a_0$.

Recall that $\lift{\tau}$ is the lift of $\tau$ with respect to the give covering map $\pi$.
Let $instar(\lift{E})$ denote the interior of the star of $\lift{E}$ in $\lift{\tau}$. We will now argue that $N(\lift{p}, a_0)$ is an embedded ball in $instar(\lift{E})$ as well. There exists an isometry $f$ of  $\H^3$ such that $pr =\pi \circ f$. Both the triangulations $f(\lift{\tau}')$ and $\lift{\tau}$ are lifts of $\tau$ under $\pi$ so they are related by a deck transformation $g$ of $\pi$, i.e., $g\circ f (\lift{\tau}')=\lift{\tau}$. Let $\lift{E}' = f^{-1} \circ g^{-1} (\lift{E})$. Then $\lift{E}'$ is a lift of $E$ under $pr$ to an edge of $\lift{\tau}'$. Let $\lift{p}' = f^{-1} \circ g^{-1}(\lift{p})$. Then $\lift{p}'$ is the point of $\lift{E}'$ that is a lift of the point $p$ under $pr$. By above arguments, $N(\lift{p}', a_0)$ is an embedded ball in $\instar(\lift{E}')$. So $g \circ f(N(\lift{p}', a_0)) = N(\lift{p}, a_0)$ is an embedded ball in $\instar(\lift{E})=g\circ f \instar(\lift{E}')$, as $g \circ f (\lift{\tau}')=\lift{\tau}$ and $g \circ f$ is an isometry of $\H^3$.
\newline

\begin{figure}
\centering
\def\svgwidth{0.9\columnwidth}
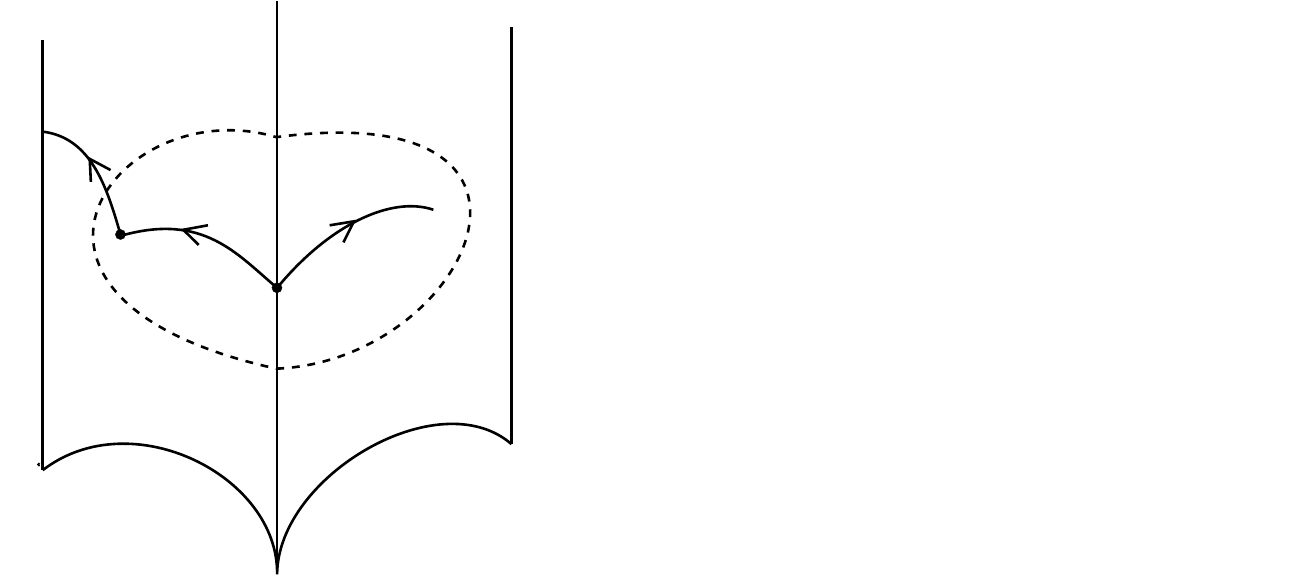
\caption{The points $\lift{q}_i \in \lift{\delta}_i$ are both lifts of $q \in \delta$. The points $\lift{p}'$ and $\lift{p}$ are lifts of $p$. The path $\lift{\alpha}$ is a lift of $\gamma_0 \star \bar{\gamma}_1$ of length less than $a_0$.}\label{edgeball_fig2}
\end{figure}

To prove that $N(p, a_0/2)$ is an embedded ball in $M$ we shall show that the covering projection $\pi:\H^3 \to M$ restricts to an injection on $N(\lift{p}, a_0/2)$ which is a ball in $\H^3$. 
Suppose for points $\lift{q}_0$ and $\lift{q}_1$ in $N(\lift{p}, a_0/2)$,  $\pi(\lift{q}_0)=\pi(\lift{q}_1)=q$. Assume that $q$ lies in the relative interior of the simplex $\delta$ of $\tau$. The map $\pi$ restricted to the relative interiors of simplexes of $\lift{\tau}$ is injective so $\lift{q}_0$ and $\lift{q}_1$ lie in the relative interior of distinct simplexes $\lift{\delta}_0$ and $\lift{\delta}_1$ of $star(\lift{E})$, both of which are lifts of $\delta$. Refer to Figure \ref{edgeball_fig2} for a diagram. Let $N_i=\{\lift{p}\} \cup (int(\lift{\delta}_i) \cap N(\lift{p}, a_0/2))$, for $i=0,1$. As $N_i$ is convex so let $\lift{\gamma}_i$ be a geodesic in $N_i$ from $\lift{p}$ to $\lift{q}_i$ which is of length less than $a_0/2$. The interiors of $\lift{\delta}_i$ are disjoint so $N_0 \cap N_1=\{\lift{p}\}$ and $\lift{\gamma}_0$ intersects $\lift{\gamma}_1$ only at $\lift{p}$. 
The map $\pi$ restricted to a small enough neighbourhood of $\lift{p}$ is an isometry, so $\gamma_0=\pi(\lift{\gamma}_0)$ and $\gamma_1=\pi(\lift{\gamma}_1)$ are different geodesics in $\delta$ from $p$ to $q$. Distinct geodesics between $p$ and $q$ can not be homotopic so $\alpha=\gamma_0 \star \bar{\gamma}_1$ is a non-trivial curve in $\delta$ through $p$ of length less than $a_0$. Lifting $\alpha$ to $\H^3$ now, we get a path $\lift{\alpha}$ in $\lift{\delta}_0$ of length less than $a_0$ from $\lift{p}$ to another lift of $p$. Hence the distance from $\lift{p}$ to an edge of $\lift{\delta}_0$ other than $\lift{E}$ is less than $a_0$, which is a contradiction. Therefore $\pi$ restricted to $N(\lift{p}, a_0/2)$ is injective and hence $N(p, a_0/2)$ is an open ball.
\end{proof}

\begin{lemma}\label{faceball}
Let $\tau$ be a geometric ideal $\theta_0$-thick triangulation of $M$ with $m$ tetrahedra. Let $z_0$ be as in Remark \ref{constants} and for $t>0$ let $r(t)= \arcsinh(\sinh(t)\sin\theta_0)$. Let $F$ be a face of $\tau$ and let $p\in F\cap M(z_0)$ be such that $d(p, \del F) \geq t$. Let $\lift{\tau}$ be a lift of $\tau$ to $\H^3$ with respect to some covering projection. Let $\lift{F}$ be a lift of $F$ in $\lift{\tau}$ and let $\lift{p}$ be the lift of $p$ in $\lift{F}$. Then $N(\lift{p}, r(t))$ is an embedded ball in $instar(\lift{F})$ and $N(p, r(t)/2)$ is an embedded ball in $M$.
\end{lemma}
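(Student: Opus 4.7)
The plan is to follow the blueprint of Lemma \ref{edgeball}, replacing the edge $\lift{E}$ by the face $\lift{F}$ and replacing the single application of Lemma \ref{distfromline} by a two-step computation that bounds the hyperbolic distance from $\lift{p}$ to the geodesic plane of an adjacent face. Since $\tau$ triangulates a $3$-manifold, $\lift{F}$ is shared by exactly two ideal tetrahedra $\lift{\Delta}_1,\lift{\Delta}_2$, so $\del star(\lift{F},\lift{\tau})$ is the union of the six $2$-faces of these tetrahedra other than $\lift{F}$. Each such face $\lift{F}'$ shares an edge $\lift{E}$ with $\lift{F}$, and the dihedral angle $\theta$ between $\lift{F}$ and $\lift{F}'$ along $\lift{E}$ is a dihedral angle of $\lift{\Delta}_1$ or $\lift{\Delta}_2$; by Remark \ref{sinecomp} it satisfies $\sin\theta\geq\sin\theta_0$.

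The analytic core is the distance-to-plane estimate. I would fix an isometry of $\H^3$ placing $\lift{E}$ on the $z$-axis, $\lift{F}$ in the vertical half-plane $\{y=0,\,x>0\}$, and the geodesic plane $\Pi'\supseteq\lift{F}'$ at dihedral angle $\theta$ from $\lift{F}$. Writing $\lift{p}=(x_0,0,z_0)$, Lemma \ref{distfromline} applied in the hyperbolic slice $\{y=0\}$ yields $d(\lift{p},\lift{E})=\arcsinh(x_0/z_0)$. The rotation of $\H^3$ by $-\theta$ about $\lift{E}$ carries $\Pi'$ to the $xz$-plane and $\lift{p}$ to $(x_0\cos\theta,-x_0\sin\theta,z_0)$; a second application of Lemma \ref{distfromline} in the vertical slice $\{x=x_0\cos\theta\}$ then gives
$$d(\lift{p},\Pi')=\arcsinh(x_0\sin\theta/z_0)=\arcsinh\bigl(\sinh(d(\lift{p},\lift{E}))\sin\theta\bigr).$$
Since $\lift{E}\subseteq\del\lift{F}$ forces $d(\lift{p},\lift{E})\geq t$, and since $\lift{F}'\subseteq\Pi'$ forces $d(\lift{p},\lift{F}')\geq d(\lift{p},\Pi')$, monotonicity of $\sinh$ and $\arcsinh$ combined with $\sin\theta\geq\sin\theta_0$ delivers $d(\lift{p},\lift{F}')\geq r(t)$. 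Minimising over the six faces of $\del star(\lift{F})$ gives $N(\lift{p},r(t))\subseteq\instar(\lift{F})$.

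For the second statement I would transcribe the covering-map injectivity argument at the end of Lemma \ref{edgeball}, with $r(t)$ in place of $a_0$ and one of the two tetrahedra in $star(\lift{F})$ in place of $\lift{\delta}_0$. If distinct $\lift{q}_0,\lift{q}_1\in N(\lift{p},r(t)/2)$ had $\pi(\lift{q}_0)=\pi(\lift{q}_1)$, they would lie in distinct lifts $\lift{\delta}_0\neq\lift{\delta}_1$ of a common simplex of $\tau$; joining $\lift{p}$ to each by short geodesics, projecting to $M$ and re-lifting produces a path in $\lift{\delta}_0$ of length less than $r(t)$ from $\lift{p}$ to another lift $\lift{p}_1\neq\lift{p}$ of $p$. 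The minimal simplex of $\lift{\tau}$ containing $\lift{p}_1$ would then be a face of $\lift{\delta}_0$ distinct from $\lift{F}$, hence lies in $\del star(\lift{F})$, contradicting the first part. I expect the main obstacle to be pinning down the trigonometric identity $d(\lift{p},\Pi')=\arcsinh(\sinh t'\sin\theta)$; the two-rotation argument above is the cleanest route, and one must be careful that the inclusion $\lift{F}'\subseteq\Pi'$ \emph{strengthens} the bound rather than weakens it.
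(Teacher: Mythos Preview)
Your proposal is correct and follows essentially the same approach as the paper: position the shared edge vertically, apply Lemma~\ref{distfromline} twice with the dihedral-angle bound $\sin\theta\geq\sin\theta_0$ sandwiched between, then replay the covering-injectivity argument of Lemma~\ref{edgeball}. Your rotation-by-$\theta$ trick and direct minimisation over the six boundary faces is a slightly cleaner packaging than the paper's detour through a normalised covering projection (which is not actually needed here, since the hypothesis $p\in M(z_0)$ plays no role in the distance estimate for this lemma), but the arguments are otherwise identical.
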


\begin{proof}
This proof is similar to that of Lemma \ref{edgeball}. For ease of notation we denote $r(t)$ simply by $r$. We shall first show that the distance between $\lift{p}$ and any other face of $\lift{\tau}$ is greater than or equal to $r$ and then argue that the projection map restricted to $N(\lift{p}, r/2)$ is an injection.

\begin{figure}
\centering
\def\svgwidth{0.9\columnwidth}
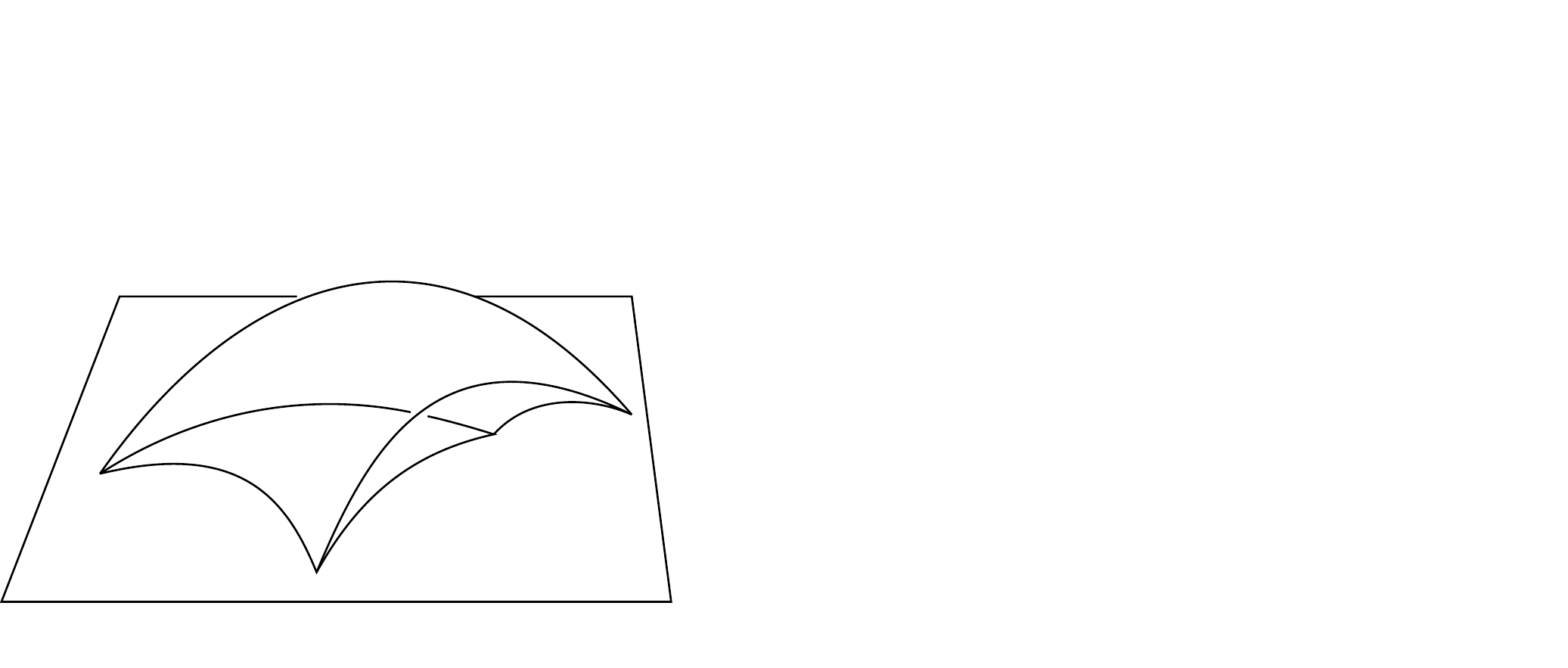
\caption{(a) $\lift{F}$ is a lift of $F$ in $\lift{\tau}$ and $\lift{s}\in \lift{G}$ is the closest point to $\lift{p}\in \lift{F}$ in $\del star(\lift{F})$. (b) $\lift{E}'$ is a lift of $E$ in $\lift{\tau}'$ and is the $z$-axis, $\lift{F}'$ is the lift of $F$ in $\lift{\tau}'$ lying in the $xz$-plane so that $\lift{p}'$ has coordinates $(x, 0, z)$.}\label{faceball_fig}
\end{figure}

Let $\pi: \H^3 \to M$ be a covering map and let $\lift{\tau}$ be a lift of $\tau$ to $\H^3$ with respect to $\pi$. The star of $\lift{F}$ in $\lift{\tau}$ is a closed ball. Let $\lift{s}$ be a closest point to $\lift{p}$ on $\del star(\lift{F})$. Suppose that $\lift{s}$ lies in a face $\lift{G}$ of $\del star(\lift{F})$. Assume that $\lift{G}$ intersects $\lift{F}$ in the edge $\lift{E}=[uv]$ (for $u$, $v$ in $\del \H^3$). Let $\lift{\Delta}$ be the 3-simplex of $\lift{\tau}$ that contains $\lift{F}$ and $\lift{G}$. See Figure \ref{faceball_fig}(a) for a diagram.

If $\pi(u)$ is the cusp $c$ of $M$, then choose a covering projection $pr: \H^3 \to M$ which is normalised with respect to $c$. Let $\lift{\tau}'$ be the lift of $\tau$ with respect to $pr$. Let $\lift{F}'$ and $\lift{G}'$ be lifts of $F$ and $G$ with a common edge $\lift{E}'$ such that $\infty$ is an endpoint of $\lift{E}'$. Both $\lift{F}'$ and $\lift{G}'$ are now vertical ideal triangles in $\H^3$, as in Figure \ref{faceball_fig}(b). We may also assume after composing with a suitable parabolic isometry which fixes $\infty$ and an elliptic isometry with axis the vertical line from $0$ to $\infty$ that $\lift{E}'$ lies along the $z$-axis and $\lift{F}'$ lies in the $xz$-plane. Note that these isometries do not affect the value of $z_0$ and the Euclidean length of the shortest closed geodesic on the normalised flat cusp torus $T_0$ of $pr$ is still $1$.

Let $\lift{p}'\in \lift{F}'$ have the coordinates $(x,0,z)$, then taking the $z$-axis as $Y$ in Lemma \ref{distfromline}, $\arcsinh(x/z)=d(\lift{p}', Y)=d(\lift{p}', \lift{E}')\geq d(p, E) \geq t$. Taking $\sinh$ on both sides of this inequality we get $x \geq z \sinh(t)$. Let $X_z = \R^2 \times \{z\}$, let $e_G=\lift{G}' \cap X_z$ and $e_F=\lift{F}' \cap X_z$. The dihedral angle between $\lift{F}'$ and $\lift{G}'$ at $\lift{E}'$ is at least $\theta_0$. Let $\bar{d}$ denote the Euclidean distance in $X_z$ induced from $\R^3$ (not from $\H^3$). Then $\bar{d}(\lift{p}', e_G) \geq x\sin\theta_0 \geq z\sinh(t)\sin\theta_0$. By Lemma \ref{distfromline} again, $d(\lift{p}', \lift{G}')= \arcsinh(\bar{d}(\lift{p}', e_G)/z) \geq \arcsinh(z\sinh(t)\sin\theta_0/ z)=r$. Therefore as $d(\lift{p}', \del star(\lift{F}')) =d(\lift{p}', \lift{G}')\geq r$ so $N(\lift{p}', r)$ is an embedded ball in $instar(\lift{F}')$. In particular the distance between $\lift{p}'$ and any face of $\lift{\tau}'$ other than $\lift{F}'$ is greater than or equal to $r$. By the same arguments as in Lemma \ref{edgeball}, as there is an isometry of $\H^3$ taking $\tau'$ to $\tau$ so $N(\lift{p}, r)$ is also an embedded ball in $instar(\lift{F})$ in $\lift{\tau}$, which is the lift of $\tau$ with respect to the given covering map $\pi: \H^3 \to M$. 

We shall next show that the covering projection $\pi:\H^3 \to M$ restricts to an injection on $N(\lift{p}, r/2)$. Suppose for points $\lift{q}_0$ and $\lift{q}_1$ in $N(\lift{p}, r/2)$,  $\pi(\lift{q}_0)=\pi(\lift{q}_1)=q$. Assume that $q$ lies in the relative interior of the simplex $\delta$ of $\tau$. The map $\pi$ restricted to the relative interiors of simplexes is injective so $q_0$ and $q_1$ lie in the interior of distinct simplexes $\lift{\delta}_0$ and $\lift{\delta}_1$ of $star(\lift{F})$, both of which are lifts of $\delta$. Let $N_i=\{\lift{p}\} \cup int(\delta_i) \cap N(\lift{p}, r/2)$, for $i=0,1$. $N_i$ is convex so let $\lift{\gamma}_i$ be a geodesic in $N_i$ from $\lift{p}$ to $\lift{q}_i$ which is of length less than $r/2$. $N_0 \cap N_1=\{\lift{p}\}$ so $\lift{\gamma}_0$ and $\lift{\gamma}_1$ intersect only at $\lift{p}$.

The map $\pi$ restricted to a small enough neighbourhood of $\lift{p}$ is an isometry, so $\gamma_0=\pi(\lift{\gamma}_0)$ and $\gamma_1=\pi(\lift{\gamma}_1)$ are different geodesics in $\delta$ from $p$ to $q$. No two distinct geodesics between $p$ and $q$ can be homotopic in $M$, so $\alpha=\gamma_0 \star \bar{\gamma}_1$ is a non-trivial curve in $\delta$ through $p$ of length less than $r$. Lifting $\alpha$ to $\H^3$, we get a path $\lift{\alpha}$ in $\lift{\delta}_0$ of length less than $r$ from $\lift{p}$ to a face of $\lift{\delta}_0$ other than $\lift{F}$. But as the distance between $\lift{p}$ and any other face of $\lift{\tau}$ is greater than or equal to $r$ so we get a contradiction. Therefore $\pi$ restricted to $N(\lift{p}, r/2)$ is injective and hence $N(p, r/2)$ is an embedded ball in $M$.
\end{proof}

Let $Y$ be a vertical geodesic in $\H^2$ and let $p$ be a point of $\H^2$ with $d(p, Y)<t$. If $q$ is the point of $Y$ closest to $p$ then it is clear that $N(p, t) \subset N(q, 2t)$. In the below lemma we show that for small enough $t$, when $q$ is a point of $Y$ at the same height as $p$ then $N(p, r(t)) \subset N(q, 2t)$.
\begin{lemma}\label{sameht}
Let $Y=\{(0,y) \in \H^2: y>0\}$ and $0<t<1/2$. Let $p=(x, y_0)$ and let $q=(0,y_0)$. If $d(p, Y)<t$, then $N(p, r(t))\subset N(q, 2t)$, where $r(t)= \arcsinh(\sinh(t)\sin\theta_0)$.
\end{lemma}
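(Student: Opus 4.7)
The plan is to reduce the ball containment $N(p, r(t)) \subset N(q, 2t)$ to a scalar inequality via the triangle inequality, then verify that inequality by elementary estimates. For any $z \in N(p, r(t))$, the triangle inequality gives $d(z, q) \leq d(z, p) + d(p, q) < r(t) + d(p, q)$, so it is enough to show $r(t) + d(p, q) \leq 2t$.

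To bound $d(p, q)$, I would use the horocircle segment from $q = (0, y_0)$ to $p = (x, y_0)$ along the horizontal line $y = y_0$ in $\H^2$. The hyperbolic line element restricts to $ds = dx/y_0$ along this curve, so its hyperbolic length is $x/y_0$, and since the geodesic between the endpoints is shorter, $d(p, q) \leq x/y_0$. By Lemma \ref{distfromline}, the hypothesis $d(p, Y) < t$ gives $\arcsinh(x/y_0) < t$, that is $x/y_0 < \sinh t$, so $d(p, q) < \sinh t$. Using the elementary inequality $\arcsinh u \leq u$ for $u \geq 0$, we also have $r(t) = \arcsinh(\sinh t \sin\theta_0) \leq \sinh t \sin\theta_0$. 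Combining, $r(t) + d(p, q) < \sinh t \,(1 + \sin\theta_0)$, so it suffices to prove $\sinh t \,(1 + \sin\theta_0) \leq 2t$.

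Rewriting this as $\sinh t / t \leq 2/(1 + \sin\theta_0)$, I observe that $\sinh t / t$ is increasing on $(0, \infty)$, so for $0 < t < 1/2$ its value is bounded above by $2\sinh(1/2) \approx 1.043$. Since $\theta_0 \leq \pi/3$ by Definition \ref{thickdef}, we have $\sin\theta_0 \leq \sqrt{3}/2$, and the right-hand side is at least $4/(2 + \sqrt{3}) = 4(2 - \sqrt{3}) \approx 1.072$, yielding the desired bound. The entire delicacy of the proof lies in this closing numerical check, which genuinely requires both the hypothesis $t < 1/2$ and the standing bound $\theta_0 \leq \pi/3$: the inequality already fails at $\sin\theta_0 = 1$, where $\sinh t \,(1 + \sin\theta_0) = 2\sinh t > 2t$ for every $t > 0$.
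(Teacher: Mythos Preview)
Your proof is correct and follows essentially the same approach as the paper: reduce to $r(t)+d(p,q)<2t$ via the triangle inequality, bound $d(p,q)$ by the horocircle length $x/y_0<\sinh t$ using Lemma~\ref{distfromline}, bound $r(t)\le \sinh t\,\sin\theta_0$ via $\arcsinh u\le u$, and then verify $\sinh t\,(1+\sin\theta_0)<2t$ numerically. The only difference is cosmetic: the paper bounds $\sinh t$ by $t+t^3/5$ on $(0,1/2)$ and multiplies out, whereas you use monotonicity of $\sinh t/t$ and compare $2\sinh(1/2)\approx 1.043$ with $4/(2+\sqrt{3})\approx 1.072$.
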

\begin{proof}
By Lemma \ref{distfromline}, $\arcsinh (x/y_0)= d(p, Y) <t$. The hyperbolic length of the horizontal segment in $\H^2$ from $p$ to $q$ is $x/y_0$. So,
$$d(p,q)<\frac{x}{y_0}<\sinh(t)$$
To show that $N(p, r(t)) \subset N(q, 2t)$, it is enough to show that $d(q, p)+r(t) <2t$.
\begin{align*}
    d(p, q)+r(t) &< \sinh (t) + \arcsinh\left(\sinh (t)\sin \theta_0\right)\\
    &\leq \sinh (t)(1+\sin \theta_0) \text { as } \arcsinh t \leq t \text{ for } t>0\\
    &\leq \sinh (t)\left(1+\frac{\sqrt{3}}{2}\right) \text{ as } 0<\theta_0\leq \pi/3\\
\end{align*}

As $\sinh(t)\leq t+t^3/5$ for $t\in(0,1/2)$ so we get,
$$
d(p, q)+ r(t) < t\left(1+\frac{t^2}{5}\right) \left(1+\frac{\sqrt{3}}{2}\right)
<t\left(1+\frac{1}{20}\right) \left(1+\frac{\sqrt{3}}{2}\right)< 2t
$$
\end{proof}

The following are some inequalities we shall repeatedly use:
\begin{lemma}\label{abound}
Let $a_0=\arcsinh(l_0\sin\theta_0/z_0)$ and $r(t)= \arcsinh(\sinh(t)\sin\theta_0)$. Let $\epsilon$ be the Margulis number for cusped complete hyperbolic manifolds. For any $t>0$, $r(t\,a_0)<t a_0$ and $a_0 <\sinh(a_0)<\epsilon<1$.
\end{lemma}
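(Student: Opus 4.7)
Each of the three inequalities is a direct estimation once the formulas for $l_0$, $z_0$ from Remark \ref{constants} are substituted in; the plan is to dispatch them one by one.

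\textbf{Step 1 ($r(ta_0) < ta_0$).} Since $\theta_0 \leq \pi/3$, we have $\sin\theta_0 < 1$, so $\sinh(ta_0)\sin\theta_0 < \sinh(ta_0)$. Applying the strictly increasing function $\arcsinh$ and using $\arcsinh(\sinh(ta_0)) = ta_0$ gives the first inequality.

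\textbf{Step 2 ($a_0 < \sinh(a_0)$).} The Taylor expansion $\sinh(x) = x + x^3/6 + \cdots$ shows $\sinh(x) > x$ for every $x > 0$, and $a_0 > 0$ by its definition.

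\textbf{Step 3 ($\sinh(a_0) < \epsilon$).} By definition, $\sinh(a_0) = l_0\sin\theta_0 / z_0$. Substituting the expressions from Remark \ref{constants} yields
\[
\sinh(a_0) \;=\; \frac{(\sin\theta_0)^{4m+2}\,(\sqrt{m^2+2m}-m)\,\epsilon}{4m\,\sqrt{2m\,v_{tet}\cot\theta_0}}.
\]
I will show the coefficient of $\epsilon$ on the right is strictly less than $1$. The numerator is bounded using $(\sin\theta_0)^{4m+2} \leq 1$ and the elementary fact $\sqrt{m^2+2m} < m+1$, which gives $\sqrt{m^2+2m}-m < 1$. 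For the denominator, $\theta_0 \leq \pi/3$ implies $\cot\theta_0 \geq 1/\sqrt{3}$; together with $v_{tet} > 1$ and $m \geq 1$ we obtain
\[
4m\sqrt{2m\,v_{tet}\cot\theta_0} \;>\; 4\sqrt{2/\sqrt{3}} \;>\; 1,
\]
so the stated ratio is less than $1$ and therefore $\sinh(a_0) < \epsilon$.

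\textbf{Step 4 ($\epsilon < 1$).} This was already recorded in the opening paragraph of Section 2: for non-compact orientable hyperbolic $3$-manifolds one has $0.29 \leq \epsilon < 1$, with the upper bound coming from the Margulis number for the Figure-Eight knot complement being less than $1$ \cite{Ada2002}.

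No serious obstacle is anticipated; the only point requiring slight care is that the numerator in Step 3 must be controlled for all admissible $\theta_0 \in (0,\pi/3]$ and $m \geq 1$ simultaneously, which is why we use the uniform bounds $(\sin\theta_0)^{4m+2}\leq 1$ and $\sqrt{m^2+2m}-m<1$ rather than sharper $m$- or $\theta_0$-dependent estimates.
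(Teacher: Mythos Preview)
Your proposal is correct and follows essentially the same approach as the paper: substitute the explicit formulas for $l_0$ and $z_0$, bound the resulting expression for $\sinh(a_0)$ by $\epsilon$ using the crude estimates $(\sin\theta_0)^{4m+2}\le 1$, $\sqrt{m^2+2m}-m<1$, $\cot\theta_0\ge 1/\sqrt{3}$, $v_{tet}>1$, $m\ge 1$, and handle $r(ta_0)<ta_0$ via monotonicity of $\arcsinh$. The only cosmetic differences are that the paper rewrites the quotient with a factor $\sqrt{\tan\theta_0}$ and the function $g(m)=(\sqrt{m^2+2m}-m)/m$ before bounding, and it spells out the waist-size argument for $\epsilon<1$ rather than citing the statement from the introduction of Section~2 as you do.
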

\begin{proof}
As $0<\theta_0 \leq \pi/3$ so $0<\tan \theta_0 \leq \sqrt{3}$. The function $g(m)=(\sqrt{m^2+2m}-m)/m$ is decreasing taking the value $\sqrt{3}-1 <1$ at $m=1$ and $v_{tet}>1$ so substituting the values of $a_0$, $l_0$ and $z_0$ and putting $m=1$ we get,
$$
a_0<\sinh(a_0) = \frac{l_0 \sin \theta_0}{z_0} = \frac{\epsilon (\sin \theta_0)^{4m+2}\sqrt{\tan \theta_0} \, g(m)}{4\sqrt{2m v_{tet}}} < \frac{3^{1/4}\epsilon}{4\sqrt{2}} < \epsilon
$$

The Margulis constant $\epsilon$ for cusped complete hyperbolic $3$-manifolds is less than $1$.  To see this, observe that a cusp neighbourhood in such a manifold can be expanded until its closure first touches itself on its boundary. The waist size of a cusp is the length of a shortest essential closed curve (that avoids the points of self-tangency) on the boundary of such a maximal cusp neighbourhood. Such a curve will correspond to a parabolic isometry in the fundamental group. Adams\cite{Ada2002} has shown that the waist size of the Figure 8 knot complement is $1$. By covering such a maximal cusp neighbourhood with a horoball in $\H^3$ centered at $\infty$ with horosphere boundary the plane $z=1$, we can see that there is a closed curve of length $2\arcsinh(1/2) \sim 0.9624$ that lies in the closure of the maximal cusp neighbourhood and touches a point of its boundary. And so the optimal Margulis number for the Figure 8 knot complement is less than $0.9625$. As $\epsilon$ is the infimum of the optimal Margulis numbers for all cusped complete hyperbolic $3$-manifolds so $\epsilon<1$.

And finally, the function $\arcsinh$ is strictly increasing so we get,
$$r(t\,a_0)=\arcsinh(\sinh(t\,a_0) \sin\theta_0)<\arcsinh(\sinh(t \, a_0))=t \, a_0
$$
\end{proof}

We are now in a position to give a lower bound on the systole length of $M$.
\begin{lemma}\label{mainlem3}
Let $M$ be a complete orientable cusped hyperbolic $3$-manifold. Let $\tau$ be a geometric ideal $\theta_0$-thick triangulation of $M$ with at most $m$ many $3$-simplexes. Let $\epsilon$ be the Margulis number for cusped complete orientable hyperbolic $3$-manifolds. Let $v_{tet}$ denote the volume of the regular ideal tetrahedron.
Then the systole length of $M$ is bounded below by $s_0(m, \theta_0)$ which is given by the following equations:
\begin{align*}
s_0=& \arcsinh(\sinh(a_0/4)\sin\theta_0)\\
a_0=& \arcsinh(l_0\sin\theta_0/z_0)\\
z_0=& \frac{\sqrt{2m\,v_{tet}\cot\theta_0}}{\epsilon\sin\theta_0}\\
l_0=& \frac{(\sin\theta_0)^{4m}(\sqrt{m^2+2m}-m)}{4m}
\end{align*}

\end{lemma}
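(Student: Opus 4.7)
The plan is to exhibit, at some point $p$ on a shortest closed geodesic $\gamma$ of $M$, an open ball $N(p, s_0/2)$ that is embedded in $M$; this pins the injectivity radius at $p$ down from below by $s_0/2$, forcing every non-trivial loop through $p$, in particular $\gamma$, to have length at least $s_0$. The first observation is that $\gamma$ cannot lie entirely in any cusp neighbourhood $C_i(z_0)$ (parabolic subgroups contain no loxodromic elements) nor entirely in the interior of a single open $3$-cell of $\tau$ (such a cell is contractible), so $\gamma$ meets the $2$-skeleton of $\tau$ at some point $p \in F \cap M(z_0)$ for a face $F$ of $\tau$.

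I split into two cases based on $d(p, \del F)$. If $d(p, \del F) \geq a_0/4$, then Lemma \ref{faceball} applied with $t = a_0/4$ directly yields that $N(p, r(a_0/4)/2) = N(p, s_0/2)$ is an embedded ball in $M$. If instead $d(p, \del F) < a_0/4$, pick an edge $E$ of $F$ with $d(p, E) = d(p, \del F)$, a covering projection $pr : \H^3 \to M$ normalised at a cusp $c$ that is an ideal vertex of $E$, and lifts arranged so that a chosen lift $\lift{E}$ of $E$ lies on the positive $z$-axis, a lift $\lift{F}$ of $F$ having $\lift{E}$ as an edge lies in the half-plane $\{y = 0,\, x > 0\}$, and the corresponding lift $\lift{p} \in \lift{F}$ of $p$ realises $d(\lift{p}, \lift{E}) = d(p, E)$. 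Because $p \notin C_c(z_0)$, the coordinates $\lift{p} = (x, 0, z)$ satisfy $z \leq z_0$; set $\lift{q} = (0, 0, z) \in \lift{E}$. Since $a_0/4 < 1/2$ by Lemma \ref{abound}, Lemma \ref{sameht} applied in the totally geodesic $xz$-plane with $t = a_0/4$ gives $d(\lift{p}, \lift{q}) + r(a_0/4) < a_0/2$, and the triangle inequality in $\H^3$ then yields the $3$-ball inclusion $N(\lift{p}, s_0/2) \subset N(\lift{q}, a_0/2)$. Lemma \ref{edgeball} applied at $q = pr(\lift{q}) \in E$ shows that $pr$ is injective on $N(\lift{q}, a_0/2)$, hence on $N(\lift{p}, s_0/2)$, and so $N(p, s_0/2)$ is embedded in $M$.

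In either case the injectivity radius at $p$ is at least $s_0/2$, so $\gamma$ has length at least $s_0$, which is the claimed bound. The main technical obstacle is that Lemma \ref{edgeball} is stated for $q \in E \cap M(z_0)$, whereas the same-height point $q$ of Case B is only guaranteed to satisfy $q \notin C_c(z_0)$; however, inspection of the proof of Lemma \ref{edgeball} shows that the only use of the stronger hypothesis is to secure $z(\lift{q}) \leq z_0$ in the $c$-normalised cover, and this follows here from $z(\lift{q}) = z(\lift{p}) \leq z_0$. A secondary point of care is the choice of lifts: realising $d(\lift{p}, \lift{E}) = d(p, E)$ may force one to work with a different lift of $F$ than the one containing the initial lift $\lift{p}_0$, but this is arranged by composing with a deck transformation, which preserves all the needed height bounds.
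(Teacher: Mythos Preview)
Your argument follows the same two-case route as the paper's, and the split on $d(p,\partial F)$ with Lemmas~\ref{faceball}, \ref{sameht}, and~\ref{edgeball} is identical. The one real gap is the sentence ``so $\gamma$ meets the $2$-skeleton of $\tau$ at some point $p \in F \cap M(z_0)$'': the two facts you cite (not entirely in a cusp neighbourhood, not entirely in one open $3$-cell) show only that $\gamma$ meets some face, not that it does so inside $M(z_0)$. A priori $\gamma$ could cross between tetrahedra only while passing through a cusp region. The paper closes this by a preliminary dichotomy: if $\gamma$ meets the $\epsilon$-thick part then $l(\gamma)\ge\epsilon>s_0$ by Lemma~\ref{abound}; otherwise $\gamma$ lies entirely in a Margulis tube, hence in $M\setminus\bigcup_i C_i \subset M(z_0)$, so every face crossing automatically lies in $M(z_0)$.

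Your flagged ``technical obstacle'' about whether $q\in E\cap M(z_0)$ is well spotted --- the paper simply asserts it --- but your resolution is not quite complete: the proof of Lemma~\ref{edgeball} normalises at whichever ideal endpoint of $E$ is shared with the face of $\partial\,star(\lift E)$ containing the nearest boundary point, and that could be either endpoint, so controlling the height only in the one cover you selected does not automatically suffice. Also, in Case B you actually obtain $N(\lift p, s_0)\subset N(\lift q, a_0/2)$ (since $d(\lift p,\lift q)+r(a_0/4)<a_0/2$), matching the paper's stronger conclusion that $N(p,s_0)$ is embedded; your weakening to $s_0/2$ is harmless but unnecessary.
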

\begin{proof}
Let $\gamma$ be a shortest closed geodesic of $M$. We need to show that the length of $\gamma$ is at least $s_0$. The thin part of $M$ consists of a pairwise disjoint union of cusp neighbourhoods $C_i$ and tubular neighbourhoods around short closed geodesics in $M\setminus \cup_i C_i$ called Margulis tubes. 

If $\gamma$ intersects the thick part of $M$ then some point of $\gamma$ has injectivity radius greater than or equal to $\epsilon/2$ and therefore length of $\gamma$ is at least $\epsilon$. By Lemma \ref{abound}, $s_0=r(a_0/4)<a_0/4<\epsilon$. So if $\gamma$ intersects the thick part of $M$ then its length $l(\gamma) >s_0$.

Cusp neighbourhoods have no minimal closed geodesics so $\gamma$ can not lie entirely in $\cup_i C_i$.  Assume that $\gamma$ lies in a Margulis tube. Interiors of simplexes are contractible so $\gamma$ intersects some face of $\tau$ in $M(z_0) \supset M \setminus \cup_i C_i$.

Let $p$ be a point of intersection of $\gamma$ with a face $F$ of $\tau$ in $M(z_0)$. Suppose that the distance between $p$ and an edge $E$ of $F$ is less than $a_0/4$. By Lemma \ref{abound}, $a_0<1$ so taking $t=a_0/4<1/2$ in Lemma \ref{sameht} there exists a point $q$ in $E\cap M(z_0)$ such that $N(p, s_0)=N(p, r(a_0/4)) \subset N(q, a_0/2)$. By Lemma \ref{edgeball}, $N(q, a_0/2)$ is an embedded ball in $M$. So $N(p,s_0)$ is also an embedded ball in $M$. If $d(p, \del F) \geq a_0/4$ then taking $t=a_0/4$ in Lemma \ref{faceball}, $N(p, r(a_0/4)/2)=N(p, s_0/2)$ is an embedded ball. In either case, because $\gamma$ is a closed geodesic through $p$ so $l(\gamma)>s_0$.
\end{proof}

Simplifying this bound results in a proof of Theorem \ref{mainthm3}.
\begin{proof}[Proof of Theorem \ref{mainthm3}]
Taking $t\in(0,1)$, $\sqrt{1+t} \geq 1+t/2-t^2/8$ and $t/2-t^2/8 > 0$ and we get the following identity
$$ 
\sqrt{\frac{\sqrt{1+t}-1}{2}} \geq \sqrt{\frac{\frac{t}{2}-\frac{t^2}{8}}{2}}
= \sqrt{\frac{t}{4}}\sqrt{1-\frac{t}{4}} >\frac{\sqrt{3t}}{4}
$$
As $\cosh(s)=2\sinh^2(s/2) +1$ and $\cosh^2(s)=1+\sinh^2(s)$ so 
$ 2\sinh^2(s/2) + 1 = \sqrt{1+\sinh^2(s)}$. By Lemma \ref{abound}, $\sinh(a_0)<1$ so putting $s=a_0$ and $t=\sinh^2(a_0)$ in the above identity we get,
$$
\sinh \left(\frac{a_0}{2}\right) = \sqrt{\frac{\sqrt{1+\sinh^2 (a_0)}-1}{2}} >\frac{\sqrt{3}}{4}\sinh(a_0)
$$
The $\sinh$ function is increasing so $\sinh(a_0/2)<\sinh(a_0)<1$. We repeat the step above with $s=a_0/2$ and $t=\sinh^2(a_0/2)$ to get
$$
\sinh \left(\frac{a_0}{4}\right) = \sqrt{\frac{\sqrt{1+\sinh^2 (a_0/2)}-1}{2}} >\frac{\sqrt{3}\sinh(a_0/2)}{4}>\frac{3}{16}\sinh(a_0)
$$
Let $g(m)= (\sqrt{m^2 + 2m}-m)/m$. Substituting the values of $l_0$ and $z_0$ we get,
$$
\sinh(a_0)=\frac{l_0\sin\theta_0}{z_0} = \frac{\epsilon (\sin\theta_0)^{4m+2}\sqrt{\tan\theta_0}\, g(m)}{4\sqrt{2m v_{tet}}} > \frac{\epsilon (\sin\theta_0)^{4m+5/2} g(m)}{4\sqrt{2m v_{tet}}}
$$

Using the inequality $\sqrt{1+t} \geq 1+t/2-t^2/8$ again with $t=2/m$ we get the following lower bound for $g(m)$ as $m \geq 1$.
$$ 
g(m)= \sqrt{1+2/m}-1\geq \frac{2m-1}{2m^2} \geq \frac{1}{2m}$$

Therefore,
$$\sinh\left(\frac{a_0}{4}\right)\sin\theta_0 > \frac{3}{16}\sinh(a_0)\sin\theta_0>\frac{3\epsilon(\sin\theta_0)^{4m+7/2}} {128m\sqrt{2m\, v_{tet}}}$$
The $\sinh$ function is increasing so $\sinh(a_0/4)\sin\theta_0 < \sinh(a_0)<1$. And $\arcsinh(t) > t/2$ for $t< 4$ so taking $t=\sinh(a_0/4)\sin\theta_0$ we get
$$
s_0=\arcsinh(\sinh(a_0/4)\sin\theta_0) > \frac{\sinh(a_0/4)\sin\theta_0}{2} >\left(\frac{3\epsilon}{256\sqrt{2v_{tet}}}\right) \frac{(\sin\theta_0)^{4m+7/2}}{m\sqrt{m}}
$$

As $\epsilon \geq 0.29$ and $v_{tet}<1.02$ so $3\epsilon/\sqrt{2v_{tet}}>1/2$. Therefore using Lemma \ref{mainlem3} we can conclude that $2^{-9} (\sin\theta_0)^{4m+7/2} m^{-3/2}$ is a lower bound for the systole length of $M$.
\end{proof}

\subsection{Bounded intersection of ideal triangulations}
The intersection of simplexes of ideal geometric triangulations $\tau_1$ and $\tau_2$ of $M$ give a common polytopal subcomplex $\tau_1 \cap  \tau_2$ which may have material (non-ideal) vertices. In this subsection we calculate an explicit bound on the number of polytopes in $\tau_1 \cap \tau_2$ (Theorem \ref{mainsubthm}).

\begin{lemma}\label{sectorvol}
Let $\tau$ be a $\theta_0$-thick geometric ideal triangulation of $M$. Let $\Delta$ be an ideal tetrahedron in $\tau$ and let $p\in \del \Delta \cap M(z_0)$. Let $\lift{\Delta}$ be a lift of $\Delta$ to $\H^3$ and let $\lift{p}$ be a lift of $p$ in $\lift{\Delta}$. Let $r_0=\arcsinh(\sinh(a_0/2)\sin\theta_0)$. Then $N(\lift{p}, r_0) \cap \lift{\Delta}$ has volume at least $\theta_0/2\pi \cdot vol(B(r_0))$, where $vol(B(r_0))$ denotes the volume of a ball of radius $r_0$ in $\H^3$.
\end{lemma}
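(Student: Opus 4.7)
The strategy is to split into two regimes based on how close $\lift{p}$ lies to an edge of $\lift{\Delta}$, and in each regime to exhibit a subset of $N(\lift{p}, r_0) \cap \lift{\Delta}$ of volume at least $\tfrac{\theta_0}{2\pi}\cdot vol(B(r_0))$.

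In the first regime I assume $\lift{p}$ lies in the interior of a face $\lift{F}$ of $\lift{\Delta}$ with $d(\lift{p}, \partial \lift{F}) \geq a_0/2$. Applying Lemma \ref{faceball} with $t = a_0/2$ (and using $r(a_0/2) = r_0$) then places $N(\lift{p}, r_0)$ inside $instar(\lift{F})$, so the totally geodesic face $\lift{F}$ cuts the ball into two equal half-balls, the one on the $\lift{\Delta}$-side having volume $\tfrac{1}{2} vol(B(r_0)) \geq \tfrac{\theta_0}{2\pi} vol(B(r_0))$, where the last inequality uses $\theta_0 \leq \pi/3$.

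In the remaining regime there is an edge $\lift{E}$ of $\lift{\Delta}$ with $\delta := d(\lift{p}, \lift{E}) < a_0/2$ (this includes the case $\lift{p} \in \lift{E}$). Let $\lift{q}$ be the nearest point to $\lift{p}$ on $\lift{E}$. Lemma \ref{edgeball} applied to $\lift{q}$, combined with the triangle inequality and the bound $r_0 < a_0/2$ from Lemma \ref{abound}, gives $N(\lift{p}, r_0) \subset N(\lift{q}, a_0) \subset instar(\lift{E})$. Therefore $N(\lift{p}, r_0)$ cannot meet either of the two faces of $\lift{\Delta}$ disjoint from $\lift{E}$, and so $N(\lift{p}, r_0) \cap \lift{\Delta}$ coincides with $N(\lift{p}, r_0) \cap W$, where $W$ is the dihedral wedge of $\lift{\Delta}$ at $\lift{E}$ of opening angle $\alpha \geq \theta_0$.

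To estimate this intersection I pick Fermi coordinates $(\rho, \phi, z)$ around $\lift{E}$ placing $\lift{p}$ at $(\delta, 0, 0)$, so that $W = \{-\beta \leq \phi \leq \alpha - \beta\}$ for some $\beta \in [0, \alpha]$. The Fermi distance formula
\[
\cosh d((\rho_1,\phi_1,z_1),(\rho_2,\phi_2,z_2)) = \cosh\rho_1\cosh\rho_2\cosh(z_1-z_2) - \sinh\rho_1\sinh\rho_2\cos(\phi_1-\phi_2)
\]
implies that for fixed $(\rho, z)$ the distance from $\lift{p}$ to $(\rho, \phi, z)$ is increasing in $|\phi|$ on $[0,\pi]$, so the cross-sectional area $g(\phi)$ of $N(\lift{p}, r_0)$ at Fermi angle $\phi$ is even and non-increasing on $[0,\pi]$. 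Leibniz gives $\tfrac{d}{d\beta}\int_{-\beta}^{\alpha-\beta} g\,d\phi = g(\beta) - g(\alpha-\beta)$, which is non-negative on $[0,\alpha/2]$ and non-positive on $[\alpha/2,\alpha]$, so the integral is minimized at $\beta \in \{0,\alpha\}$, where it equals $\int_0^\alpha g\,d\phi$. Since $g$ is non-increasing on $[0,\pi] \supset [0,\alpha]$, its average over $[0,\alpha]$ dominates its average over $[0,\pi]$, yielding
\[
vol(N(\lift{p}, r_0) \cap W) \geq \int_0^\alpha g\,d\phi \geq \tfrac{\alpha}{\pi}\int_0^\pi g\,d\phi = \tfrac{\alpha}{2\pi} vol(B(r_0)) \geq \tfrac{\theta_0}{2\pi} vol(B(r_0)).
\]

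The main obstacle I expect is the Fermi-coordinates step in the second regime: invoking the correct distance formula in $\H^3$ around a geodesic and carefully verifying the angular monotonicity of $g$. Once this monotonicity is granted, the case split and the averaging inequality make the rest essentially routine.
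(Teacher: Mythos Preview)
Your Case 1 coincides with the paper's. In Case 2 you take a genuinely different route. The paper chooses a lift with $\lift{E}$ vertical, takes $\lift{q}$ on $\lift{E}$ at the \emph{same height} as $\lift{p}$, uses Lemma~\ref{sameht} (not the triangle inequality) to obtain $N(\lift{p},r_0)\subset N(\lift{q},a_0)$, and then exhibits an explicit sector $S(\lift{p},r_0,\theta)\subset\lift\Delta$ by sliding the plane of the far face $\lift{G}$ parallel to itself through $\lift{p}$. You instead pass to Fermi coordinates about $\lift{E}$ and run a monotonicity/averaging argument on the angular cross-section $g(\phi)$. Your route is more analytic and does not depend on the upper-half-space ``parallel plane'' picture; the paper's is more pictorial and gives a concrete sector. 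Both deliver the same bound $\tfrac{\theta_0}{2\pi}\,vol(B(r_0))$, and your Fermi computation (the distance formula, the evenness and monotonicity of $g$, and the averaging inequality $\tfrac{1}{\alpha}\int_0^\alpha g\ge \tfrac{1}{\pi}\int_0^\pi g$) is correct.

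There is, however, a genuine gap in how you reach $N(\lift{p},r_0)\subset instar(\lift{E})$. You take $\lift{q}$ to be the \emph{nearest} point of $\lift{E}$ to $\lift{p}$ and invoke Lemma~\ref{edgeball} at $\lift{q}$. But Lemma~\ref{edgeball} requires its base point to project into $E\cap M(z_0)$, and the foot of the perpendicular from $\lift{p}$ to a vertical $\lift{E}$ sits at height $\sqrt{x^2+z(\lift{p})^2}>z(\lift{p})$, so $pr(\lift{q})$ may land in the cusp region $C(z_0)$ even though $pr(\lift{p})\in M(z_0)$. This is exactly why the paper uses the same-height point together with Lemma~\ref{sameht} rather than the nearest point with the triangle inequality: the same-height choice keeps $\lift{q}$ in $M(z_0)$ automatically, at the cost of a slightly worse distance estimate that Lemma~\ref{sameht} absorbs. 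If you replace your $\lift{q}$ by the same-height point and cite Lemma~\ref{sameht} for the inclusion $N(\lift{p},r_0)\subset N(\lift{q},a_0)$, the rest of your Fermi/averaging argument goes through unchanged, since nothing in it uses that $\lift{q}$ is the nearest point.
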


\begin{proof}
Let $F$ be a face of $\Delta$ containing $p$ and let $\lift{p}\in \lift{F}$ which is a lift of $F$ to a face of $\lift{\Delta}$. If $d(\lift{p}, \partial \lift{F}) \geq a_0/2$ then putting $t=a_0/2$ in Lemma \ref{faceball}, we get $N(\lift{p}, r(a_0/2))=N(\lift{p}, r_0)$ is an embedded ball in $instar(\lift{F})$. Simplexes in $\lift{\tau}$ are uniquely determined by their ideal vertices on $\del \H^3$ so $star(\lift{F})$ is the union of two tetrahedra $\lift{\Delta}$ and $\lift{\Delta}'$ identified along $\lift{F}$. Hence $\lift{F}$ divides $N(\lift{p}, r_0)$ into congruent halves one of which lies entirely in $\lift{\Delta}$. Therefore  $N(\lift{p}, r_0) \cap \lift{\Delta}$ has volume equal to $1/2 \cdot vol(B(r_0)) \geq \theta_0/2\pi \cdot vol(B(r_0))$ as $\theta_0 \leq\pi/3$.

If for some edge $\lift{E}$ of $\lift{F}$, $d(\lift{p}, \lift{E}) <a_0/2$ then choose a lift $\lift{\tau}$ where $\lift{E}$ is a vertical geodesic. Let $\lift{q}$ be a point on $\lift{E}$ at the same height as $\lift{p}$. By Lemma \ref{abound}, $a_0/2<1/2$ and so putting $t=a_0/2$ in Lemma \ref{sameht}, $N(\lift{p}, r_0) \subset N(\lift{q}, a_0)$. As $\lift{p} \in \H^3\setminus H(z_0)$ so is $\lift{q}$ and by Lemma \ref{edgeball}, $N(\lift{q}, a_0)$ is a ball in $instar(\lift{E})$. We claim that there exists a sector of the ball $N(\lift{p}, r_0)$ with dihedral angle $\theta_0$ that lies entirely inside $\lift{\Delta}$.

\begin{figure}
\centering
\def\svgwidth{0.4\columnwidth}
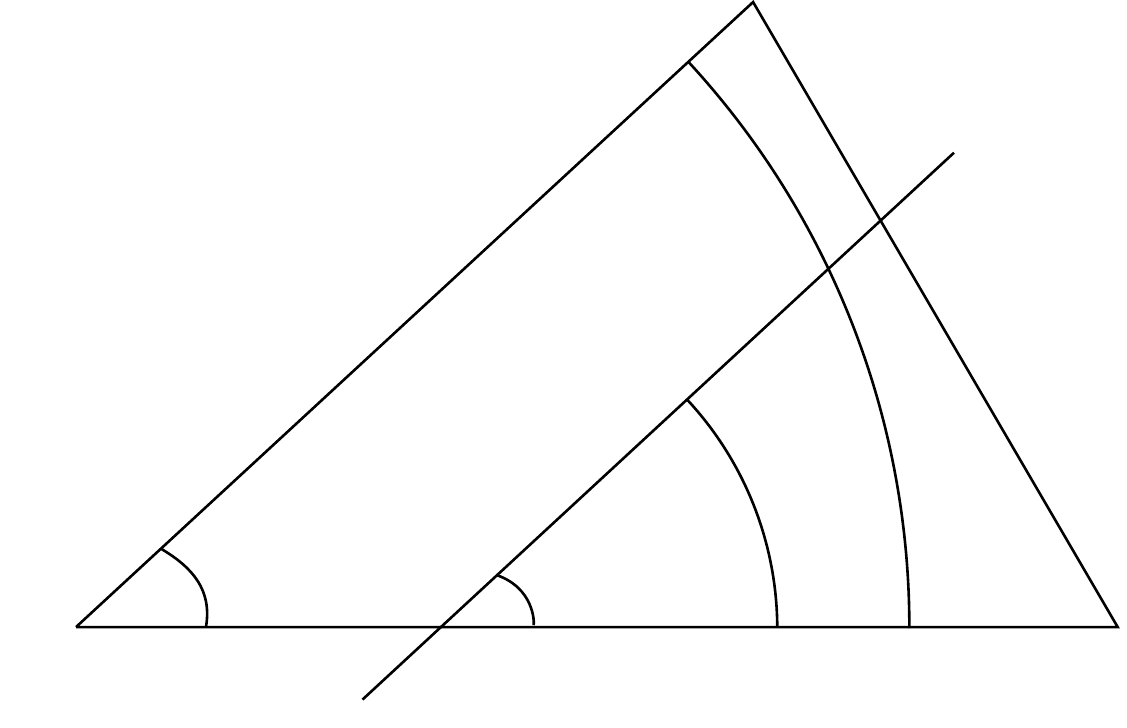
\caption{Faces $\lift{F}$ and $\lift{G}$ meet along the edge $\lift{E}$, $H$ is a plane parallel to $\lift{G}$ through $\lift{p}$ and $N(\lift{p}, r_0) \subset N(\lift{q}, a_0)$, so the sector $S(\lift{p}, r_0, \theta) \subset S(\lift{q}, a_0, \theta)= N(\lift{q}, a_0) \cap \lift{\Delta}$.}\label{sectorvol_fig}
\end{figure}

Let $\lift{G}$ be the other face of $\lift{\Delta}$ that contains $\lift{E}$. See Figure \ref{sectorvol_fig} for a cross-section by a horosphere centered at $\infty$. Let $\theta \geq \theta_0$ be the dihedral angle between $\lift{F}$ and $\lift{G}$ at $\lift{E}$. The ball $N(\lift{q}, a_0)$ lies in the interior of $star(\lift{E})$ so $\lift{\Delta} \cap N(\lift{q}, a_0)$ is the sector $S(\lift{q}, a_0, \theta)$ of a ball centered at $\lift{q}$ of radius $a_0$ and sectorial angle $\theta$. Let $H$ be a vertical geodesic plane through $\lift{p}$ parallel to $\lift{G}$. $H$ divides $S(\lift{q}, a_0, \theta)$ into two regions one of which intersects $\lift{E}$ and one which does not. Let $\delta$ denote the region which does not intersect $\lift{E}$. We have $N(\lift{p}, r_0) \subset N(\lift{q}, a_0)$ so $N(\lift{p}, r_0) \cap \delta$ is the sector $S(\lift{p}, r_0, \theta) \subset S(\lift{q}, a_0, \theta)$. As $\theta\geq\theta_0$ so $S(\lift{p}, r_0, \theta_0) \subset N(\lift{p}, r_0) \cap \lift{\Delta}$ and the volume of $S(\lift{p}, r_0, \theta_0)$ is $\theta_0/(2\pi) \cdot vol(B(r_0))$. 
\end{proof}

Let $\tau$ be a geometric ideal $\theta_0$-thick triangulation of $M$. In Lemma \ref{edgeball} we calculated a lower bound on the distance between the edges of $\tau$ in the thick part of $M$. Using this bound we now give an upper bound on the number of connected components in the intersection of the edge set of $\tau$ and a tetrahedron $\Delta$ of another  geometric ideal triangulation $\tau'$.

\begin{lemma}\label{finiteint}
Let $\tau$ and $\tau'$ be geometric ideal $\theta_0$-thick triangulations of $M$. Let $pr: \H^3 \to M$ be a covering map. Let $\lift{\tau}$ and $\lift{\tau}'$ denote lifts of $\tau$ and $\tau'$ with respect to $pr$ to triangulations of $\H^3$. Let $\Delta$ be an ideal tetrahedron of $\tau'$ and let $\lift{\Delta}$ denote a lift of $\Delta$ in $\lift{\tau}'$.  Let $E(\lift{\tau})$ be the set of edges of $\lift{\tau}$. Let $r_0=\arcsinh(\sinh(a_0/2)\sin\theta_0)$, let $v_{tet}$ denote the volume of the regular ideal tetrehedron and let $n$ be the number of components of $E(\lift{\tau}) \cap \lift{\Delta}$. Then  
$$n \leq \frac{2\pi \, v_{tet}}{\theta_0 \,vol(B(r_0/2))}$$ 
\end{lemma}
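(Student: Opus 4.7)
The plan is a volume-packing argument inside $\lift{\Delta}$. To each component $\sigma$ of $E(\lift{\tau})\cap\lift{\Delta}$ I will attach a region $R_\sigma := N(\lift{p}_\sigma, r_0/2)\cap\lift{\Delta}$ of hyperbolic volume at least $\frac{\theta_0}{2\pi}\,vol(B(r_0/2))$, and check that the $R_\sigma$ are pairwise disjoint. Since any ideal hyperbolic tetrahedron has volume at most $v_{tet}$, summing gives $n\cdot\frac{\theta_0}{2\pi}\,vol(B(r_0/2))\le vol(\lift{\Delta})\le v_{tet}$, which rearranges to the stated bound.

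Because each edge of $\lift{\tau}$ is a complete geodesic and $\lift{\Delta}$ is convex, any single edge meets $\lift{\Delta}$ in a connected set, so the $n$ components of $E(\lift{\tau})\cap\lift{\Delta}$ correspond bijectively to the distinct edges $\lift{E}_\sigma$ of $\lift{\tau}$ meeting $\lift{\Delta}$. For each $\sigma$, I would choose a point $\lift{p}_\sigma \in \sigma\cap \partial\lift{\Delta}$ with $pr(\lift{p}_\sigma)\in M(z_0)$; such a point exists because $\sigma$ crosses $\partial\lift{\Delta}$ at its endpoints, and (after possibly replacing $\lift{\Delta}$ by a deck translate) one crossing can be taken outside the horoball preimages of $\bigcup_i C_i(z_0)$. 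Next, apply Lemma \ref{sectorvol} to $\tau'$ at $\lift{p}_\sigma$: this is valid because $\tau'$ is $\theta_0$-thick and $\lift{\Delta}$ is a lift of a tetrahedron of $\tau'$, and the conclusion (read off from the sector construction in that proof) is that the sector $S(\lift{p}_\sigma, r_0, \theta_0)$ is contained in $\lift{\Delta}$. The smaller sector $S(\lift{p}_\sigma, r_0/2, \theta_0)\subset R_\sigma$ has hyperbolic volume exactly $\frac{\theta_0}{2\pi}\,vol(B(r_0/2))$, giving the required volume lower bound on $R_\sigma$.

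For pairwise disjointness of the $R_\sigma$: if $\sigma_1\ne \sigma_2$ then the edges $\lift{E}_{\sigma_1}\ne \lift{E}_{\sigma_2}$ are distinct, so Lemma \ref{edgeball} applied to $\lift{p}_{\sigma_1}\in\lift{E}_{\sigma_1}$ (whose image lies in $M(z_0)$) gives $N(\lift{p}_{\sigma_1}, a_0)\subset\instar(\lift{E}_{\sigma_1})$, hence $d(\lift{p}_{\sigma_1},\lift{E}_{\sigma_2})\ge a_0$ and therefore $d(\lift{p}_{\sigma_1},\lift{p}_{\sigma_2})\ge a_0$. Since $r_0 = \arcsinh(\sinh(a_0/2)\sin\theta_0) < \arcsinh(\sinh(a_0/2)) = a_0/2$, we have $2(r_0/2)=r_0<a_0$, so the balls $N(\lift{p}_{\sigma_1}, r_0/2)$ and $N(\lift{p}_{\sigma_2}, r_0/2)$ are disjoint, whence $R_{\sigma_1}\cap R_{\sigma_2}=\emptyset$. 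Adding volumes inside $\lift{\Delta}$ and using $vol(\lift{\Delta})\le v_{tet}$ finishes the argument.

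The main obstacle is guaranteeing existence of $\lift{p}_\sigma \in \sigma\cap\partial\lift{\Delta}\cap pr^{-1}(M(z_0))$: a priori, both crossings of $\lift{E}_\sigma$ with $\partial\lift{\Delta}$ could lie inside horoball preimages near ideal vertices shared by $\lift{E}_\sigma$ and $\lift{\Delta}$. Handling this degenerate case likely requires invoking the $\tau$- and $\tau'$-normalised cusp heights at common ideal vertices simultaneously, or replacing $\lift{p}_\sigma$ by an interior point of $\sigma$ and producing the sector bound via a direct combination of Lemma \ref{edgeball}, Lemma \ref{sameht} and the dihedral angle lower bound on faces of $\lift{\Delta}$.
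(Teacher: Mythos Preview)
Your volume-packing strategy is exactly the paper's approach, and your use of Lemma~\ref{sectorvol} for the sector volume and Lemma~\ref{edgeball} for disjointness matches theirs. The one place you differ is the part you flag yourself: the existence of $\lift{p}_\sigma\in\sigma\cap\partial\lift{\Delta}\cap pr^{-1}(M(z_0))$. Your suggestion to ``replace $\lift{\Delta}$ by a deck translate'' is a dead end (the count is intrinsic to the fixed lift), and the degenerate case you worry about does occur, so this is a genuine gap rather than a technicality.

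The paper closes it by choosing $z_0$ (via $m=m_1+m_2$ in Remark~\ref{constants}) above \emph{both} the $\tau$- and the $\tau'$-normalised cusp heights simultaneously. Then in each cusp neighbourhood $C_i(z_0)$, both $\tau$ and $\tau'$ are cones over their induced Euclidean triangulations of $T_i(z_0)$; in the upper half-space picture, every edge of $\lift{\tau}$ and every face of $\lift{\Delta}$ that enters the horoball $H(z_0)$ is vertical there. Now argue in $M$: if $\sigma$ has an ideal endpoint it is itself an edge of $\Delta$ and meets $M(z_0)$; otherwise $\sigma$ hits some face $F$ of $\Delta$ at a point $p$. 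If $p\in C_i(z_0)$, the vertical ray $[\bar p,c_i)$ through $p$ is simultaneously the local piece of the $\tau$-edge $E$ and contained in the vertical face $F$, hence lies in $\sigma$, and its foot $\bar p\in T_i(z_0)\subset\partial M(z_0)$ is the desired point of $\sigma\cap\partial\Delta\cap M(z_0)$. This is precisely the ``invoke both normalised cusp heights at once'' idea you guessed at; carrying it out as above completes your proof.
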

\begin{proof}
Let $m_1$ and $m_2$ be the number of ideal tetrahedra in $\tau$ and $\tau'$ respectively. Put $m=m_1+m_2$ in Remark \ref{constants} to obtain $z_0$ larger than  the $\tau$-normalised cusp heights and the $\tau'$-normalised cusp heights of all cusps of $M$. Let $pr_i$ be a covering projection that is normalised with respect to the $i$-th cusp. 

Let $H(z_0) =\{(x, y, z) \in \H^3: z>z_0\}$ and let $X_{z_0} = \R^2 \times \{z_0\}$. Let $C_i(z_0) = pr_i(H(z_0))$ and let $cl(C_i(z_0))$ denote its closure in $M$. Let $C(z_0) = \cup_{i=1}^k C_i (z_0)$ and let $M(z_0) = M \setminus C(z_0)$ be its complement. $M(z_0)$ is a compact hyperbolic manifold with tori boundary $T_i(z_0) = pr_i (X_{z_0})$, $i=1, ..., k$. All components of $\del M(z_0)$ are flat in the induced metric and both $\tau$ and $\tau'$ induce Euclidean triangulation on $\del M$.

We first claim that each connected component $\sigma$ of $E(\tau) \cap \Delta$ intersects $\del \Delta \cap M(z_0)$. Let $E$ be the edge of $\tau$ that contains $\sigma$. If $\sigma$ does not intersect $\del \Delta$ then both its end points are ideal vertices, in which case $\sigma=E$. It has a lift $\lift{E} \subset \lift{\Delta}$. Both the ideal vertices of $\lift{E}$ are also ideal vertices of $\lift{\Delta}$ so $\lift{E}$ is an edge of $\lift{\Delta}$. Consequently $E$ is an edge of $\Delta$. All edges of $\Delta$ intersect $M(z_0)$, so $\sigma \cap \del \Delta \cap M(z_0)= E\cap M(z_0)$ is non-empty.  Therefore we may assume that $\sigma$ intersects $\del \Delta$. Assume that $\sigma$ intersects $\del \Delta$ in $C(z_0)$, i.e., for some face $F$ of $\Delta$ there exists  $p \in \sigma \cap F \cap C_i(z_0)$ for some $1 \leq i \leq k$. The triangulations induced by $\tau$ and $\tau'$ in $cl(C_i(z_0))$ are the cones over $\tau\cap T_i(z_0)$ and $\tau'\cap T_i(z_0)$ respectively.  Each edge of $\tau$ and each face of $\Delta$ which meets the cusp $C_i(z_0)$ is orthogonal to the cusp torus $T_i(z_0)$. So there exists a point $\bar{p} \in T_i(z_0)$ and a geodesic ray $[\bar{p}, c_i)$ in $cl(C_i(z_0))$ from $\bar{p}$ orthogonal to $T_i(z_0)$ which is the connected component of $E \cap cl(C_i(z_0))$ containing $p$. The point $p$ lies in $[\bar{p}, c_i) \cap F$ so $[\bar{p}, c_i)$ is a subset of $F \cap cl(C_i(z_0))$. Therefore $\sigma \supset [\bar{p}, c_i)$ and in particular $\bar{p} \in \sigma \cap \del \Delta \cap M(z_0)$.

Lifting to $\H^3$ under $pr$, each connected component $\sigma$ of $E(\lift{\tau}) \cap \lift{\Delta}$ intersects $\del \lift{\Delta}\cap pr^{-1}(M(z_0))$. For each such component $\sigma$ choose a point $p_\sigma \in \sigma \cap \del \lift{\Delta}\cap pr^{-1}(M(z_0))$ and let $A_\sigma=N(p_\sigma, r_0/2) \cap \lift{\Delta}$. Note that as $\arcsinh$ is an increasing function so $r_0 = \arcsinh(\sinh(a_0/2) \sin\theta_0) <a_0/2$.

We claim that the collection of sets $\{A_\sigma : \sigma$ \mbox{ is a component of } $E(\lift{\tau}) \cap \lift{\Delta}\}$ are pairwise disjoint subsets of $\lift{\Delta}$. Suppose there exist different components $\sigma_0$ and $\sigma_1$ of $E(\lift{\tau})  \cap \lift{\Delta}$ such that the corresponding $A_{\sigma_0}$ intersects $A_{\sigma_1}$. Then for $i=0,1$, there exist points $p_i\in \sigma_i \cap \del \lift{\Delta} \cap pr^{-1}(M(z_0))$ with $d(p_0, p_1) <r_0$. Assume that $\sigma_i$ lies in an edge $E_i$ of $\lift{\tau}$. By Lemma \ref{edgeball}, $N(p_0, r_0) \subset N(p_0, a_0)$ is an embedded ball in $instar(E_0)$ and as $E_0$ is the only edge of $\lift{\tau}$ that intersects $instar(E_0)$ so $E_0=E_1$. Let $E=E_0=E_1$. The points $p_0$ and $p_1$ lie on different components of $E(\lift{\tau}) \cap \lift{\Delta}$, so the edge segment $E_{[p_0,p_1]}$ can not lie entirely in $\lift{\Delta}$. But as $\lift{\Delta}$ is convex so there exists some geodesic from $p_0$ to $p_1$ in $\lift{\Delta}$. We therefore end up with two distinct geodesics between a pair of points in $\H^3$, which is a contradiction.

By Lemma \ref{sectorvol}, the volume of $A_\sigma$ is at least $v=\theta_0\, vol(B(r_0/2))/(2\pi)$. Assume there are $n$ many components $\sigma_i$ of $E(\lift{\tau}) \cap \lift{\Delta}$. The corresponding sets $A_{\sigma_i}$ are disjoint so $nv \leq vol(\lift{\Delta})\leq v_{tet}$. This gives, $n \leq v_{tet}\, 2\pi/(\theta_0 vol(B(r_0/2)))$ as required. 
\end{proof}

\begin{figure}
\centering
\def\svgwidth{0.4\columnwidth}
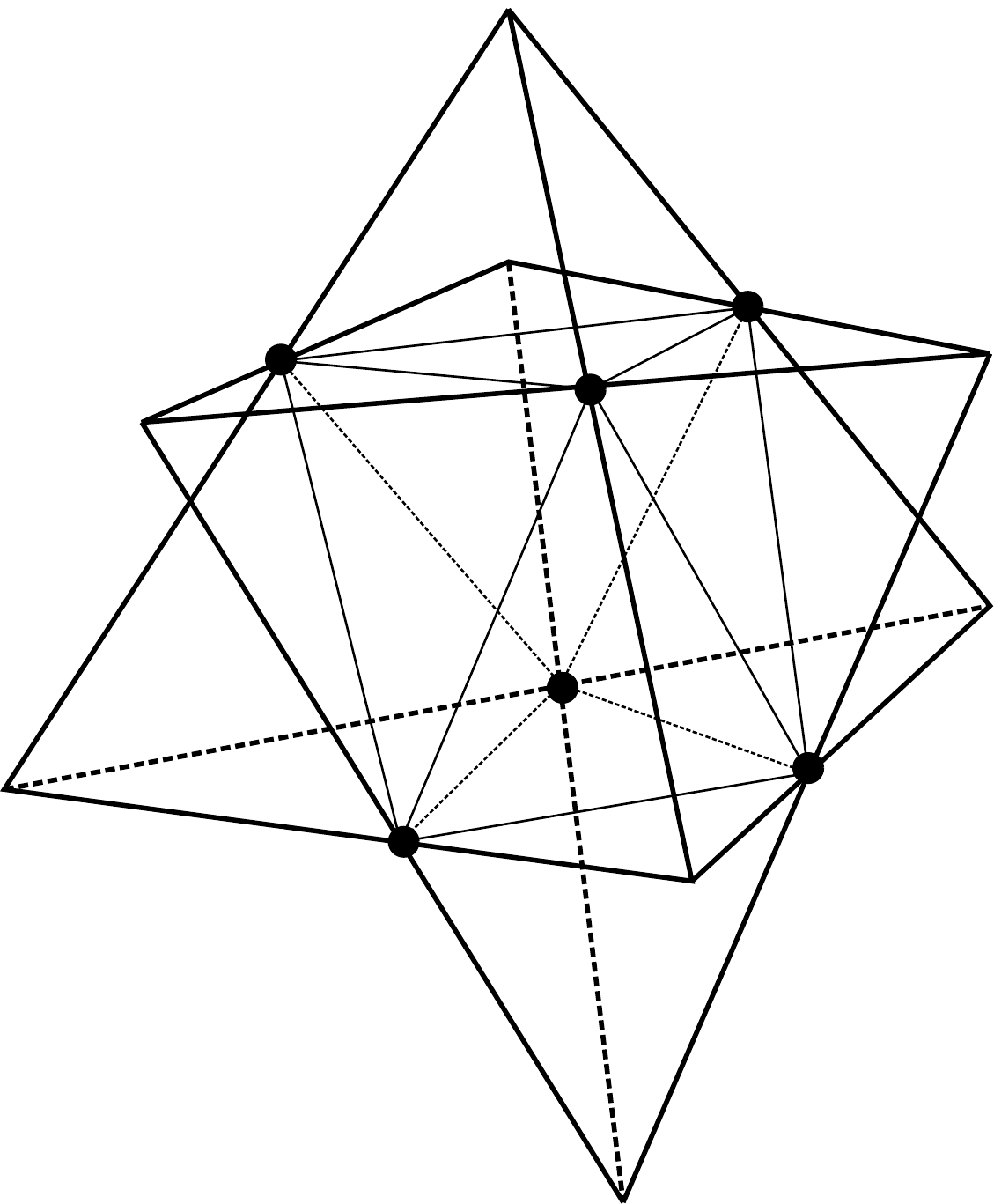
\caption{Tetrahedra $\Delta$ and $\Delta'$ drawn with solid lines intersecting in an octahedron $P$ with one vertex on each edge of $\Delta$.}\label{badpoly}
\end{figure}

We shall use the next lemma to argue that the number of polytopes in the polytopal complex $\tau \cap \Delta$ depends on the number of components of $E(\tau) \cap \Delta$.

\begin{lemma}\label{edgesub}
Let $\Delta$  be a hyperbolic ideal tetrahedron in $\H^3$ and let $\tau$ be an ideal triangulation of $\H^3$. There exists at most one $3$-dimensional polytope $P$ in the polytopal complex $\Delta \cap \tau$ with the property that no edge of $P$ lies in an edge of $\Delta$ or in an edge of $\tau$.
\end{lemma}
\begin{proof}
Let $\Delta'$ be a tetrahedron of $\tau$ and let $P=\Delta \cap \Delta'$ be a $3$-polytope in the polytopal complex $\Delta \cap \tau$. We shall first prove that $P$ has exactly $6$ vertices, one on each edge of $\Delta$, as in Figure \ref{badpoly}. And then we shall prove that there is a unique such polytope in the polytopal complex $\Delta \cap \tau$.

 Faces of $P$ are subsets of faces of $\Delta$ and $\Delta'$, so edges of $P$ are subsets of the intersection of faces of $\Delta$ and $\Delta'$. If no edge of $P$ lies in an edge of $\Delta$ or of $\Delta'$ then every edge of $P$ lies in the intersection of a face of $\Delta$ with a face of $\Delta'$. Let $v$ be a vertex of $P$. If we list the faces of $P$ that meet at $v$ in a clockwise fashion, they alternate between faces of $P$ that lie in faces of $\Delta$ and faces of $P$ that lie in faces of $\Delta'$.
So at least two faces of $\Delta$ and at least two faces of $\Delta'$ meet at every vertex of $P$. Assume that $\{v\}= F_1 \cap F_2 \cap G_1 \cap G_2$ where $F_1$ and $F_2$ are faces of $\Delta$ and $G_1$ and $G_2$ are faces of $\Delta'$. Let $e_1=F_1 \cap F_2$ and $e_2=G_1 \cap G_2$ be edges of $\Delta$ and $\Delta'$ respectively so that $v=e_1 \cap e_2$. If $w$ is another vertex of $P$ which lies on $e_1$ then as $P$ is convex so the edge segment $e_1|_{[v, w]}$ lies in $P$ and as $P \subset \Delta$ so $e_1|_{[v,w]}$ is in fact an edge of $P$. This contradicts the fact that no edge of $P$ lies in an edge of $\Delta$.

Therefore every vertex of $P$ lies on an edge of $\Delta$ and at most one vertex of $P$ lies on any edge of $\Delta$. The simplex $\Delta$ has $6$ edges so $P$ has at most $6$ vertices, each on a distinct edge of $\Delta$. And as at least $4$ faces meet at each vertex of $P$, the degree of each vertex, i.e., the number of edges of $P$ that meet at the vertex, is at least $4$.

$P$ is a $3$-polytope so it has at least $4$ vertices. If $P$ has $4$ vertices then it is a tetrahedron so each vertex has degree $3$. If $P$ has $5$ vertices, then the degree of each vertex must be exactly $4$. If $a, b, c, d$ are the vertices adjacent to a vertex $e$, then either they are coplanar and $[abcd]$ is a quadrilateral face of $P$ or they form two triangles, say $[abc]$ and $[bcd]$. In the first case $a, b, c, d$ all end up with degree $3$, in the second case $a$ and $d$ have degree $3$. The degree of each vertex of $P$ is at least $4$ so we can conclude that $P$ must have $6$ vertices, one on each edge of $\Delta$. See Figure \ref{badpoly} for an example of this exceptional polytope.

Let $v$ be a vertex of $\Delta$ and let $a, b, c$ be the vertices of $P$ that lie on the edges of $\Delta$ that contain $v$. Let $H$ be the geodesic plane in $\H^3$ containing $a$, $b$ and $c$. Then $H$ separates $v$ from the edges of $\Delta$ that do not contain $v$. No vertex of $P$ lies on the side of $H$ containing $v$ so $H \cap P$ is the triangle $[abc]$ which is the convex hull of $a, b, c$. Therefore $[abc]$ is a triangular face of $P$ and we call it a normal triangle of $P$ with respect to vertex $v$. 

Suppose there are two $3$-polytopes $P_1$ and $P_2$ in $\Delta \cap \tau$ with $6$ vertices, one on each edge of $\Delta$. Let $t_1$ and $t_2$ be the normal triangles of $P_1$ and $P_2$ with respect to vertex $v$ of $\Delta$. If $t_1$ intersects $t_2$ then the interiors of $P_1$ and $P_2$ intersect, which is a contradiction as they are both polytopes of a polytopal complex. If $t_1$ and $t_2$ are parallel then assume that $t_1$ is closer to $v$ than $t_2$. As $t_2$ separates $t_1$ and the edges of $\Delta$ not containing $v$, so in particular it separates the vertices of $t_1$ from the vertices of $P_1$ that lie on the edges of $\Delta$ not containing $v$. And so again, the interiors of $P_1$ and $P_2$ intersect. Therefore there is at most one polytope $P$ in $\Delta \cap \tau$ with the property that no edge of $P$ lies in an edge of $\Delta$ or $\tau$.
\end{proof}

We are finally in a position to prove the main theorem of this section:
\begin{theorem}\label{mainsubthm}
Let $M$ be an orientable complete cusped hyperbolic $3$-manifold. Let $\tau_1$ and $\tau_2$ be geometric ideal $\theta_0$-thick triangulations of $M$ with at most $m_1$ and $m_2$ many $3$-simplexes respectively. Let $m=m_1+ m_2$. Let $v_{tet}$ denote the volume of the regular hyperbolic ideal tetrahedron and let $\epsilon$ be the Margulis number for cusped orientable hyperbolic $3$-manifolds. The total number of $3$-polytopes in the polytopal complex $\tau_1 \cap \tau_2$ is bounded above by 
$$f(m, \theta_0)=\left( \frac{4\pi v_{tet}}{\theta_0^2 (sinh(r_0) - r_0)}+1\right)m$$
where 
\begin{align*}
r_0 =& \arcsinh(\sinh(a_0/2)\sin\theta_0)\\
a_0=& \arcsinh(l_0\sin\theta_0/z_0)\\
z_0=& \sqrt{2m\,v_{tet}\cot\theta_0}/(\epsilon\sin\theta_0)\\
l_0=& (\sin\theta_0)^{4m}(\sqrt{m^2+2m}-m)/(4m)
\end{align*}
\end{theorem}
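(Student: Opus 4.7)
The plan is to bound the number of $3$-polytopes in $\tau_1\cap\tau_2$ tetrahedron-by-tetrahedron in $\tau_2$ and then sum. Fix $\Delta\in\tau_2$. Each $3$-polytope in $\Delta\cap\tau_1$ has the form $P=\Delta\cap\Delta'$ for some $\Delta'\in\tau_1$. By Lemma \ref{edgesub}, at most one such $P$ has no edge lying on an edge of $\Delta$ or of $\tau_1$; this contributes the ``$+1$'' term. Every other polytope has an edge either of type (a): lying on some edge $e$ of $\Delta$, hence a segment of $e$ entirely inside some single $\Delta'\in\tau_1$; or of type (b): lying on some edge $E$ of $\tau_1$, hence a component of $E\cap\Delta$. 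Let $A(\Delta)$ be the total number of components of $E(\Delta)\cap\Delta'$ summed over $\Delta'\in\tau_1$, and let $B(\Delta)$ be the number of components of $E(\tau_1)\cap\Delta$.

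A type-(a) segment uniquely determines the tetrahedron $\Delta'\in\tau_1$ containing it, hence the polytope $P$, so the number of such polytopes is at most $A(\Delta)$. For type (b), each component of $E(\tau_1)\cap\Delta$ lies on a unique edge $E$ of $\tau_1$; by the $\theta_0$-thick hypothesis the dihedral angles around $E$ sum to $2\pi$ with each summand at least $\theta_0$, so at most $2\pi/\theta_0$ tetrahedra of $\tau_1$ share $E$, and each contributes at most one polytope having that component as an edge. Therefore
\[
 \#\{3\text{-polytopes in }\Delta\cap\tau_1\}\ \leq\ 1+A(\Delta)+\frac{2\pi}{\theta_0}\,B(\Delta).
\]

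Now I sum over $\Delta\in\tau_2$. Lemma \ref{finiteint} gives $B(\Delta)\leq n$ where, using the hyperbolic ball volume formula $vol(B(r_0/2))=\pi(\sinh r_0-r_0)$, we have $n=2v_{tet}/(\theta_0(\sinh r_0-r_0))$, so $\sum_{\Delta\in\tau_2}B(\Delta)\leq nm_2$. For $A$ I reverse the order of summation and use the $\theta_0$-thickness of $\tau_2$:
\[
 \sum_{\Delta\in\tau_2}A(\Delta)=\sum_{e\in E(\tau_2)}\deg_{\tau_2}(e)\cdot\#(\text{pieces of }e\text{ in }\tau_1)\ \leq\ \frac{2\pi}{\theta_0}\sum_{\Delta'\in\tau_1}\#(E(\tau_2)\cap\Delta')\ \leq\ \frac{2\pi nm_1}{\theta_0},
\]
where the last inequality is Lemma \ref{finiteint} with the roles of $\tau_1$ and $\tau_2$ exchanged; this is legitimate because the constants $z_0,l_0,a_0,r_0$ in Remark \ref{constants} were set with $m=m_1+m_2$ and apply simultaneously to both triangulations. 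Assembling,
\[
 \#\{3\text{-polytopes in }\tau_1\cap\tau_2\}\ \leq\ m_2+\frac{2\pi n}{\theta_0}(m_1+m_2)\ \leq\ \left(1+\frac{4\pi v_{tet}}{\theta_0^2(\sinh r_0-r_0)}\right)m,
\]
which is $f(m,\theta_0)$.

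The main obstacle is the combinatorial bookkeeping in the per-tetrahedron bound: since Lemma \ref{edgesub} only guarantees that the witnessing edge lies in $E(\Delta)\cup E(\tau_1)$, one must verify that $A(\Delta)$ together with $(2\pi/\theta_0)B(\Delta)$ covers every non-exceptional polytope (with harmless double-counting when a polytope has edges of both types). A secondary subtlety is the dual application of Lemma \ref{finiteint} to bound pieces of $E(\tau_2)$ inside tetrahedra of $\tau_1$, which is why the constants in Remark \ref{constants} must be taken uniformly for $m=m_1+m_2$ rather than separately for $m_1$ and $m_2$.
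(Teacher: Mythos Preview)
Your proof is correct and follows essentially the same strategy as the paper: both arguments split the $3$-polytopes according to whether a witnessing edge lies in $E(\tau_1)$ or in $E(\tau_2)$, apply Lemma~\ref{finiteint} in each direction (with the constants of Remark~\ref{constants} taken for $m=m_1+m_2$), use the $2\pi/\theta_0$ degree bound on edges, and invoke Lemma~\ref{edgesub} for the exceptional polytope. The only cosmetic differences are that the paper organises the count symmetrically (polytopes with an edge in $E(\tau_1)$ versus in $E(\tau_2)$) and bounds the number of exceptional polytopes by $\min(m_1,m_2)$ rather than your $m_2$; both are absorbed into the same final bound.
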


\begin{proof}
Let $d$ denote the maximum number of 3-simplexes of $\lift{\tau}_1$ that share an edge. The lift $\lift{\tau}_1$ is $\theta_0$-thick, so $d \leq 2\pi/\theta_0$. Let $\lift{\Delta}$ be the lift of a tetrahedron $\Delta$ of $\tau_2$. Let $E(\lift{\tau}_1)$ denote the edge set of $\lift{\tau}_1$. The number of components $n$ of  $E(\lift{\tau}_1)\cap \lift{\Delta}$ is bounded above as in Lemma \ref{finiteint}. So the number of 3-polytopes in $\lift{\tau}_1\cap \lift{\Delta}$ which have an edge that lies inside an edge of $\lift{\tau}_1$ is bounded above by $dn$. The covering projection restricts to an isometry from $\lift{\Delta}$ to $\Delta$, so the number of polytopes in $\tau_1 \cap \Delta$ that have an edge which lies inside an edge of $\tau_1$ is also bounded above by $dn$. Varying $\Delta$ over all 3-simplexes of $\tau_2$, the total number of polytopes of $\tau_1 \cap \tau_2$ which have an edge that lies in an edge of $\tau_1$ is bounded above by $dnm_2 \leq (2\pi/\theta_0)\, nm_2$. Similarly the total number of polytopes of $\tau_1 \cap \tau_2$ which have an edge that lies in an edge of $\tau_2$ is bounded above by $(2\pi/\theta_0) \,nm_1$. 

Each polytope of $\tau_1 \cap \tau_2$ has an edge which lies in either an edge of $\tau_1$ or $\tau_2$, barring the exceptional polytopes described in Lemma \ref{edgesub}. Each tetrahedron of $\tau_1$ and of $\tau_2$ has at most one such exceptional polytope, so in total there are at most $\min(m_1, m_2)$ many of them. Therefore substituting the bound for $n$ obtained in Lemma \ref{finiteint}, the total number of polytopes in $\tau_1 \cap \tau_2$ is bounded above by 
$$
(2\pi/\theta_0) n(m_1+m_2) + \min (m_1, m_2)  \leq  ((2n\pi/\theta_0)+1)m \leq\left( \frac{(2\pi)^2 v_{tet}}{\theta_0^2 \, vol(B(r_0/2))}+1\right)m
$$
The hyperbolic volume of a ball of radius $r_0/2$ is $\pi(sinh(r_0) - r_0)$. Substituting this for $vol(B(r_0/2))$ gives the required bound.

\end{proof}

The proof of Theorem \ref{mainthm2} now trivially follows from the following result of \cite{KalPha2}. See Definition \ref{barydefn} for a definition of derived subdivision.

\begin{theorem}[Theorem 1.2 of \cite{KalPha2}]\label{geompach}
Let $K_1$ and $K_2$ be geometric simplicial triangulations (possibly with material vertices) of a cusped hyperbolic manifold which have a common geometric subdivision (with finitely many simplexes). Then for some $s\in \N$, the $s$-th derived subdivisions $\beta^s K_1$ and $\beta^s K_2$ are related by geometric Pachner moves.
\end{theorem}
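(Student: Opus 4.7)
The plan is to reduce the statement to the case where one triangulation subdivides the other, and then carry out that reduction by iteratively removing interior vertices of the subdivision, using regularity to guarantee each local re-triangulation is geometric. Let $L$ be the given common geometric subdivision of $K_1$ and $K_2$. If I can show that for each $i\in\{1,2\}$ some derived subdivision $\beta^s L$ is related to $\beta^s K_i$ by geometric Pachner moves, then by symmetry and transitivity $\beta^s K_1$ and $\beta^s K_2$ are themselves so related. So fix $i$ and write $K=K_i$; the problem reduces to relating $\beta^s K$ to its geometric subdivision $\beta^s L$.

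The first substantive step is to make the subdivision \emph{regular}. For each top-dimensional simplex $\sigma$ of $\beta^s K$, the restriction $\beta^s L|_\sigma$ is a polyhedral subdivision of $\sigma$; I would invoke the classical fact from PL combinatorics that iterated derived subdivision turns any polyhedral subdivision into a regular one, i.e.\ one arising as the projection of the lower envelope of a piecewise-linear convex function $\phi_\sigma$ on $\sigma$. Taking $s$ large enough to handle the finitely many simplexes of $K$ simultaneously, I may assume every $\beta^s L|_\sigma$ is regular. Since $\sigma$ is geodesically convex in $M$ (with material vertices replacing ideal vertices after barycentric subdivision), the piecewise-linear data transfers faithfully to the hyperbolic setting and gives a geodesic lifting of $\sigma$ whose lower envelope realizes $\beta^s L|_\sigma$.

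Next, I would work star-by-star and remove interior vertices of $\beta^s L|_\sigma$ in order of decreasing $\phi_\sigma$-height. Each such vertex $v$ sits at the apex of $star(v,\beta^s L|_\sigma)$, and regularity forces $lk(v,\beta^s L|_\sigma)$ to bound a geodesic convex polytope $P_v\subset\sigma$ whose cone from $v$ is exactly $star(v)$. Replacing this cone by any other triangulation of $P_v$ that uses only the vertices of $lk(v)$ corresponds to a sequence of bistellar (Pachner) moves local to $star(v)$; every intermediate triangulation refines $P_v$ by geodesic simplexes, hence is geometric. Iterating vertex-by-vertex flattens $\beta^s L|_\sigma$ down to the trivial triangulation of $\sigma$. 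Performing this procedure inside every top-dimensional $\sigma$ (in parallel, since stars of interior vertices in different $\sigma$'s are disjoint), and then inducting down the dimension of the skeleton of $\beta^s K$ to remove the vertices of $\beta^s L$ that lie in lower-dimensional cells of $\beta^s K$, converts $\beta^s L$ to $\beta^s K$ through geometric triangulations.

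The main obstacle is Step 2: producing a single $s$ that makes \emph{every} induced subdivision $\beta^s L|_\sigma$ regular at once, while simultaneously ensuring that the newly created material vertices at barycenters of ideal simplexes are placed consistently across adjacent stars so that the global triangulation is still geometric near the cusps. Regularity for a fixed polyhedral subdivision is classical, but a uniform bound requires controlling the combinatorics of $L$ under $\beta^s$ globally, and the convexity hypotheses must be verified in the hyperbolic rather than Euclidean sense. A secondary difficulty is the induction on skeleta: when lower-dimensional vertices of $\beta^s L$ are removed, the bistellar moves extend across the stars of several top-dimensional cells of $K$ at once, and one must check that the convex polytopes $P_v$ assembled from pieces in adjacent simplexes remain geodesic; this is where the inductive hypothesis on regularity of boundary cells is used.
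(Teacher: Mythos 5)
First, a point of comparison: this statement is not proved in the present paper at all --- it is imported verbatim as Theorem 1.2 of \cite{KalPha2}, and the only description of its proof given here is the introductory remark that the obstacles to the Izmestiev--Schlenker sweep are overcome in \cite{KalPha2} ``by working locally in stars of simplexes and using the property of regularity of a suitable subdivision.'' Your proposal is in that same spirit (local work in stars plus a convex lifting function), but its pivotal step is unsupported. You invoke ``the classical fact from PL combinatorics that iterated derived subdivision turns any polyhedral subdivision into a regular one.'' That is not a classical fact, and you offer neither proof nor reference. What the literature (and this paper, in Lemma \ref{Pachnerlem}, via Theorem A of \cite{AdiBen}) provides is that a derived subdivision of a Euclidean subdivision of a convex $3$-polytope is \emph{shellable}; shellability is much weaker than regularity (coherence), non-regular geometric triangulations of polytopes exist, and whether finitely many derived subdivisions always produce a regular one is exactly the kind of statement that must be established rather than assumed. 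Since you yourself identify this as ``the main obstacle,'' the crux of the theorem is left unproved in your write-up.

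There are also structural problems in how the reduction is organized. The barycenters of simplexes of $K$ are in general not vertices of $\beta^s L$, so $\beta^s L$ is \emph{not} a subdivision of $\beta^s K$, and for a top simplex $\sigma$ of $\beta^s K$ the cells of $\beta^s L$ cross $\del \sigma$; the ``restriction'' you flatten inside each $\sigma$ is really the common refinement $\beta^s K \cap \beta^s L$, so your star-by-star procedure relates $\beta^s K$ to (a triangulation of) that refinement, not to $\beta^s L$, and the reduction does not close up. The local scheme has to be organized over the simplexes of $K$ itself, e.g.\ via partial derived subdivisions as in Definition \ref{barydefn} and stars in $\beta_r K$ as in Lemma \ref{Lem3.5}, with the added complications of the cusped setting (stars of ideal simplexes are non-compact, so the Euclidean lower-envelope machinery must be transported through the Klein model with care at ideal vertices, as in Lemma \ref{Pachnerlem}, and stars may fail to embed unless one first passes to a derived subdivision, cf.\ Lemma \ref{starballs}). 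Finally, even granting regularity, deleting the highest vertex and retriangulating the convex region $P_v$ is not a single Pachner move, and showing it is a sequence of \emph{geometric} Pachner moves whose intermediate triangulations remain genuine geometric triangulations of $M$ (consistent across adjacent stars) is precisely the substantive content for which \cite{KalPha2} is cited; your proposal gestures at it but does not supply it.
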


\begin{proof}[Proof of Theorem \ref{mainthm2}]
Let $K_1 = \beta^2 \tau_1$ and let $K_2=\beta^2 \tau_2$ be the second derived subdivision of $K_1$ and $K_2$. They are both geometric simplicial triangulations, i.e., each simplex of $K_i$ is determined by its (material and ideal) vertices. Each tetrahedron $\Delta$ of $\tau_i$ is subdivided into $(4!)^2$ tetrahedra in $K_i$. Let $\Delta_1$ and $\Delta_2$ be tetrahedra of $\tau_1$ and $\tau_2$ and let $P$ be a connected component of $\Delta_1 \cap \Delta_2$. The polytope $P$ is convex so the covering projection map restricts to an isometry from $\lift{P}$ to $P$. There exist lifts $\lift{\Delta}_i$ of $\Delta_i$ such that  $\lift{P}=\lift{\Delta}_1 \cap \lift{\Delta}_2$. So the number of polytopes in $\beta^2 \lift{\Delta}_1 \cap \beta^2 \lift{\Delta}_2$ is at most $(4!)^4$. Consequently the number of $3$-polytopes in $K_1 \cap K_2$ is at most $(4!)^4$ times the number of $3$-polytopes in $\tau_1 \cap \tau_2$. By Theorem \ref{mainsubthm}, $\tau_1 \cap \tau_2$ has finitely many $3$-polytopes, so $\beta(K_1 \cap K_2)$ is a (finite) common geometric simplicial subdivision of both $K_1$ and $K_2$. 

We can now apply Theorem \ref{geompach} to obtain a sequence of Pachner moves through geometric triangulations between $\beta^s K_1=\beta^{s+2} \tau_1$ and $\beta^s K_2 = \beta^{s+2} \tau_2$. In dimension 3, it is easy to see that derived subdivisions of geometric triangulations can be realised by Pachner moves through geometric triangulations (see for example Lemma 2.11 of \cite{IzmSch}). So $\tau_1$ and $\tau_2$ are related by Pachner moves through geometric triangulations, via $\beta^{s+2} \tau_1$ and $\beta^{s+2} \tau_2$.
\end{proof}

\section{Bound on Pachner moves}
In this section we use the bound on the number of polytopes in a common polytopal 
subdivision calculated in the previous section to prove Theorem \ref{mainthm1}. The triangulations we shall consider in this section may be non-ideal, i.e., they may have material vertices. We call the topological triangulation of a hyperbolic manifold geometric if the relative interior of every $n$-simplex is a totally geodesic $n$-disk. The combinatorial techniques we shall use are from previous work by Phanse and the first author\cite{KalPha} with tighter bounds calculated here for dimension $3$. 
\begin{definition}
Let $pr: \H^3 \to M$ be a covering map. Let $\tau$ be a topological triangulation of $M$ possibly with ideal and material vertices. Let $D$ be a sub-complex of $\tau$ such that its lift $\lift{D}$ is a simplicially triangulated closed $3$-ball subcomplex of $\lift{\tau}$ in $\H^3\cup \del \H^3$. Let $D'$ be a triangulated closed 3-ball subcomplex of $\del \Delta^4$ and let $\phi: D' \to \lift{D}$ be a simplicial isomorphism. A bistellar or Pachner move on $\tau$ consists of removing $D$ and replacing it with $D'$ attached along the boundary $pr(\phi(\del D'))$. See Figure  \ref{Pachnerfig} for the four possible Pachner moves (in dimension $3$).
\end{definition}

\begin{figure}
\centering
\def\svgwidth{1\columnwidth}
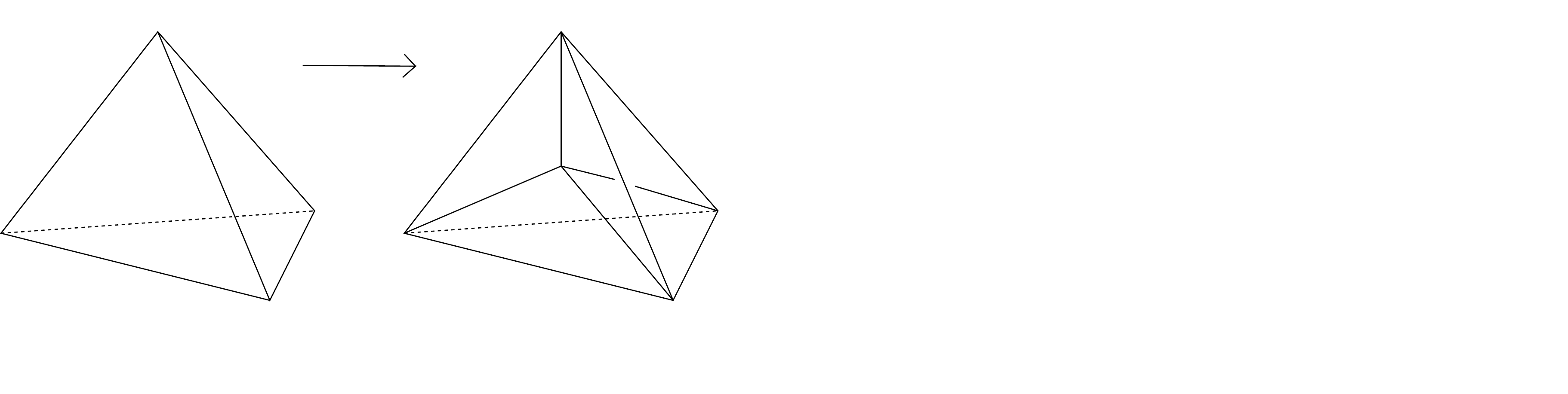
\caption{The two pairs of Pachner moves.}\label{Pachnerfig}
\end{figure}

\begin{definition}
Let $K$ be a simplicial topological triangulation of a closed $n$-ball. A shelling of $K$ is an ordering $\Delta_1$, $\Delta_2$, ..., $\Delta_k$ of the $n$-simplexes of $K$ such that for $1 <j\leq k$, $\Delta_j \cap(\cup_{i=1}^{j-1} \Delta_i)$ is an $(n-1)$-disk subcomplex of $\del \Delta_j$. We say $K$ is shellable if it has a shelling sequence.
Let $K'$ be a simplicial topological triangulation of an $n$-sphere. We say $K'$ is shellable if for some $n$-simplex $\Delta_0$ of $K'$, $K'\setminus \Delta_0$ is shellable. Let $L$ be the subcomplex of a triangulation of $M$. We say $L$ is shellable if there exists a lift of $L$ to $\H^3$ which is shellable.
\end{definition}

It is easy to see that $2$-polytopes are shellable. Higher dimensional polytopes though may not be shellable. Rudin\cite{Rud} gave an example of a Euclidean subdivision of a Euclidean 3-simplex which is not shellable. Lickorish\cite{Lic2} has given a family of unshellable topological triangulations of a 3-sphere. The main result we shall use in this section is by Adiprasito and Benedetti\cite{AdiBen} who showed that a derived subdivision of a Euclidean triangulation of a convex 3-polytope is always shellable. 

Shellable $n$-balls are 'starrable' i.e., a shellable topological triangulation of the $n$-ball can be changed to the cone over the boundary of the ball by Pachner moves (through topological triangulations). For the sake of completeness we give here a proof of Lemma 5.7 of \cite{Lic} in dimension $3$.

\begin{lemma}[Lemma 5.7 of \cite{Lic}] \label{shellable}
Let $K$ be a shellable triangulation of a $3$-ball $B$ with $r$ many $3$-simplexes, then $K$ is related to $p \star \del K$ by a sequence of $r$ Pachner moves, where $p\in int(B)$.
\end{lemma}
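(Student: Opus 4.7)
My plan is to induct along the shelling $\Delta_1, \ldots, \Delta_r$, modifying the triangulation one tetrahedron at a time. Fix an interior point $p \in \mathrm{int}(\Delta_1)$ and set $K_j = \Delta_1 \cup \cdots \cup \Delta_j$. I will show by induction on $j$ that after $j$ Pachner moves the complex becomes $L_j := (p \star \del K_j) \cup \Delta_{j+1} \cup \cdots \cup \Delta_r$, so that $L_0 = K$ and $L_r = p \star \del K$, which is the desired conclusion. The base case $j = 1$ is handled by a single $1$--$4$ move applied to $\Delta_1$, inserting $p$ as an interior vertex and producing $p \star \del \Delta_1$.

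For the inductive step at stage $j \geq 2$, the shelling condition guarantees that $F_j := \Delta_j \cap K_{j-1}$ is a $2$-disk subcomplex of $\del \Delta_j$, so it consists of $k \in \{1,2,3\}$ faces of the tetrahedron $\Delta_j = [abcd]$. The three cases match the three remaining Pachner moves. If $k = 1$, say $F_j = [abc]$, then $p \star F_j$ and $\Delta_j$ share the face $[abc]$, and a $2$--$3$ move on this pair produces $p \star [abd]$, $p \star [bcd]$, $p \star [acd]$, which are precisely the cones on the three new boundary triangles of $\del K_j$. If $k = 2$, say $F_j = [abc] \cup [abd]$ with common edge $[ab]$, a $3$--$2$ move on the three tetrahedra $p \star [abc]$, $p \star [abd]$, $\Delta_j$ (all containing $[ab]$) produces $p \star [acd]$ and $p \star [bcd]$. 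If $k = 3$, the three faces of $F_j$ share a vertex $a$, and a $4$--$1$ move on the four tetrahedra $p \star [abc]$, $p \star [abd]$, $p \star [acd]$, $\Delta_j$ incident to $a$ collapses them to the single tetrahedron $p \star [bcd]$.

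The main technical point to verify, which I expect to be the principal obstacle, is the local validity of the $3$--$2$ and $4$--$1$ moves: they require that the listed tetrahedra be precisely the star of the relevant edge or vertex in $L_{j-1}$. Concretely, in case $k = 2$ (respectively $k = 3$) I must rule out any later tetrahedron $\Delta_i$ with $i > j$ containing the edge $[ab]$ (respectively the vertex $a$). The key observation is that the edge $[ab]$ (respectively the vertex $a$) lies in the relative interior of the $2$-disk $F_j$; once $\Delta_j$ is glued to $K_{j-1}$ along $F_j$, this cell is interior to the ball $K_j$, so any simplex of $K$ whose open star meets $\mathrm{int}(K_j)$ must lie entirely in $K_j$. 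Concatenating the $r$ moves yields the required sequence.
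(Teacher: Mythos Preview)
Your proof is correct and follows essentially the same approach as the paper: induction along the shelling, with a single $1$--$4$ move for the base case and one $2$--$3$, $3$--$2$, or $4$--$1$ move at each inductive step depending on whether $F_j$ has one, two, or three triangles. The paper organizes the induction on $r$ rather than on $j$ and leaves the local validity of the moves implicit (it follows since Pachner moves never alter the boundary, so the remaining tetrahedra attached along $\partial K_{j-1}$ cannot interfere), whereas you spell this out explicitly; otherwise the arguments are identical.
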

\begin{proof}
We prove this by induction on the number $r$ of $3$-simplexes of $K$. When $r=1$, then $K$ is a $3$-simplex and a single 1-4 Pachner move introducing the new vertex $p$ changes $K$ to $p\star \del K$. 

Let $\Delta_1, ..., \Delta_r$ be a shelling ordering for the $3$-simplexes of $K$. As $K'=\cup_{i=1}^{r-1} \Delta_i$ is a triangulated $3$-ball with $r-1$ many $3$-simplexes so by induction $K'$ is related to $p* \del K'$ by $r-1$ many Pachner moves. Let $\Delta_r=[abcd]$. Let $D=\del \Delta_r \cap \del K'$ and let $D'=\del \Delta_r \setminus int(D)$ be $2$-disk subcomplexes of $\Delta_r$

There are three possibilities for $D$. If $D$ is a $2$-simplex say $[bcd]$ then $D' = [abc]\cup[abd] \cup [acd]$. And a 2-3 Pachner move changes $\Delta_r \cup p\star D$ to $p\star D'$. If $D$ is a union of two $2$-simplexes say $[abc] \cup [bcd]$ then $D'=[abd]\cup[acd]$ and a 3-2 Pachner move change $\Delta_r \cup p\star D$ to $p\star D'$. And lastly if $D$ is the union of three $2$-simplexes say $[abc]\cup[abd]\cup [acd]$ then $D'=[bcd]$ and a 4-1 Pachner move changes $\Delta_r \cup p\star D$ to $p\star D'$. So exactly one Pachner move is needed to change $(p \star \del K') \cup \Delta_r$ to $p\star \del K$. So in all, we need $r$ Pachner moves to change $K$ to $p\star \del K$.
\end{proof}

\begin{definition}\label{barydefn}
Let $K$ be the geometric triangulation (possibly with material vertices) of a hyperbolic manifold $M$. Let $\alpha K$ be a geometric subdivision of $K$. Let $\beta^\alpha_r K$ be the geometric subdivision of $K$ such that, if $A$ is a simplex in $K$ and $dim (A) \leq r$, then $\beta^\alpha_r A = \alpha A$ and if $dim (A) > r$ then $\beta^\alpha_r A = b(A) \star \beta^\alpha_r \del \alpha A$ for some point $b(A)$ in the relative interior of $A$ (with $b(A)=A$ if $A$ is a vertex), i.e. it is subdivided as the geometric cone on the already defined subdivision of its boundary. In other words, if $K^{(r)}$ denoted the  the $r$-skeleton of $K$, then $\beta^\alpha_r K^{(r)}=\alpha K^{(r)}$ and for $s>r$, any $s$-simplex of $K$ is inductively subdivided as the the cone over its boundary. Observe that $\beta^\alpha_3 K$ is $\alpha K$ while $\beta^\alpha_0 K = \beta K$ is called the derived subdivision of $K$. When $\alpha K = K$, we denote $\beta^\alpha_r K$ by $\beta_r K$ and call it a partial derived subdivision. See Figure 4 of \cite{KalPha} for an example.
\end{definition}

Given $\theta_0$-thick geometric ideal triangulations $K_1$ and $K_2$ of $M$ with at most $m_1$ and $m_2$ many ideal tetrahedra, we obtained a bound $f$ in Theorem \ref{mainsubthm} on the number of 3-polytopes in the polytopal complex $K_1 \cap K_2$. Its derived subdivision $K'=\beta(K_1 \cap K_2)$ is then a common geometric subdivision of $K_1$ and $K_2$. In this section, we bound the number of Pachner moves needed to change $K_i$ to $\beta K'$, which leads to a proof of Theorem \ref{mainthm1}.

Let $pr: \H^3 \to M$ be a covering projection and let $K$ be a geometric ideal simplicial triangulation of $M$. Then for any simplex $A$ of $K$ and lift $\lift{A}$ of $A$ in $\lift{K}$, by definition $pr$ restricts to an isometry from $star(\lift{A}, \lift{K})$ to $star(A, K)$. Even when $K$ is not simplicial, such a property almost holds for partial derived subdivisions:

\begin{figure}
\centering
\def\svgwidth{0.8\columnwidth}
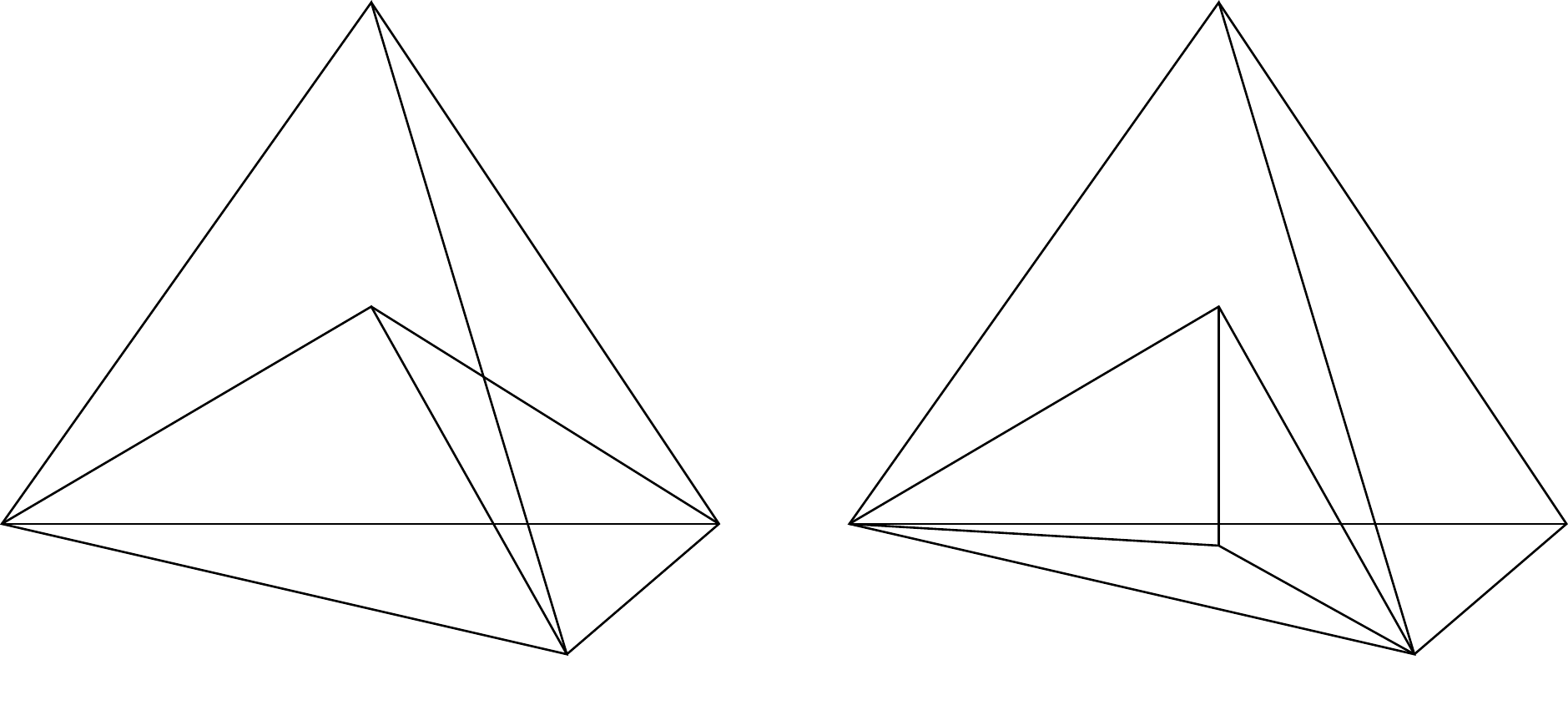
\caption{(i) $A$ is a face of $\Delta$ and $\sigma$ a 3-simplex in $star(A, \beta_2 \Delta)$ (ii) $A$ is an edge of $\Delta$ and $\sigma$ a 3-simplex in $star(A, \beta_1 \Delta)$ }\label{pder}
\end{figure}

\begin{lemma}\label{starballs}
Let $pr:\H^3 \to M$ be a covering projection. Let $K$ be a geometric ideal triangulation of $M$ and let $A$ be an $r$-simplex of $K$. Let $\lift{A}$ be a lift of $A$ in the lifted geometric ideal triangulation $\lift{K}$ of $\H^3$. Then $A$ is also an $r$-simplex of $\beta_r K$ and $pr$ restricts to an isometry from $instar(\lift{A}, \beta_r \lift{K})$ to $instar(A, \beta_r K)$.
\end{lemma}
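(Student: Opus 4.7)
The plan is to prove the two assertions separately. The first, that $A$ is still an $r$-simplex of $\beta_r K$, is immediate from the definition of $\beta_r^\alpha$ with $\alpha = \mathrm{id}$, which leaves the entire $r$-skeleton untouched; so $A$ remains an $r$-simplex of $\beta_r K$ and $\lift{A}$ of $\beta_r \lift{K}$. The second is the main content. I will choose the interior points $b(B)$ used in the definition of $\beta_r$ equivariantly with respect to the deck group $\Gamma = \pi_1(M)$, by picking $b(B)$ in each simplex $B$ of $K$ with $\dim B > r$ and setting $b(\lift{B})$ to be the corresponding lift of $b(B)$ at each lift $\lift{B}$; this makes $pr$ a simplicial local isometry from $\beta_r \lift{K}$ to $\beta_r K$, so it suffices to show $pr$ restricts to a bijection from $instar(\lift{A}, \beta_r \lift{K})$ onto $instar(A, \beta_r K)$.

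For surjectivity onto the star, each $3$-simplex $\sigma$ of $star(A, \beta_r K)$ lies in the subdivision $\beta_r \Delta$ of some $3$-simplex $\Delta$ of $K$ containing $A$; any lift $\lift{\Delta}$ produces a corresponding $3$-simplex $\lift{\sigma}'$ of $\beta_r \lift{K}$ with $pr(\lift{\sigma}') = \sigma$. The simplex $\lift{\sigma}'$ contains some lift $\lift{A}'$ of $A$, and applying a deck transformation that carries $\lift{A}'$ to $\lift{A}$ sends $\lift{\sigma}'$ into $star(\lift{A}, \beta_r \lift{K})$. Since $pr$ is an open map, this upgrades to surjectivity on the open stars.

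Injectivity is the heart of the argument and is precisely where the partial derived subdivision does its work. Suppose $pr(\lift{p}_1) = pr(\lift{p}_2)$ with $\lift{p}_i \in instar(\lift{A}, \beta_r \lift{K})$ and let $g \in \Gamma$ satisfy $g(\lift{p}_1) = \lift{p}_2$. Since $g$ is simplicial on $\beta_r \lift{K}$, it carries the $3$-simplex $\lift{\sigma}_1 \ni \lift{p}_1$ to a $3$-simplex $\lift{\sigma}_2 = g(\lift{\sigma}_1)$ of $star(\lift{A}, \beta_r \lift{K})$. The key combinatorial observation is that for each $r \in \{0,1,2,3\}$, every $3$-simplex of $star(\lift{A}, \beta_r \lift{K})$ contains exactly one $r$-face that is itself an $r$-simplex of $\lift{K}$, namely $\lift{A}$: the remaining vertices of such a $3$-simplex are new barycenters $b(\lift{B})$ with $\dim \lift{B} > r$, none of which is a vertex of $\lift{K}$, so any face lying entirely in $\lift{K}$ is forced to be spanned by vertices coming from $\lift{A}$. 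Applying $g$ to this uniquely distinguished $r$-face forces $g(\lift{A}) = \lift{A}$.

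Once $g$ preserves $\lift{A}$ setwise, the equivariant choice of barycenters gives $g(b(\lift{A})) = b(\lift{A})$ for $r \geq 1$, and for a (material) vertex $\lift{A}$ in the case $r=0$, $g$ fixes $\lift{A}$ directly. Thus $g$ is an orientation-preserving isometry of $\H^3$ with a fixed point in $\H^3$, hence elliptic or the identity; but $\Gamma$ acts freely on $\H^3$ because $M$ is a hyperbolic manifold, so $\Gamma$ has no nontrivial elliptic elements, giving $g = \mathrm{id}$ and $\lift{p}_1 = \lift{p}_2$. Combining injectivity, surjectivity, and the fact that $pr$ is a local isometry yields that $pr$ restricts to an isometry between the open stars. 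The main obstacle is the combinatorial step isolating $\lift{A}$ as the unique $r$-face-in-$\lift{K}$ of each $3$-simplex of the star, and this is precisely the property that fails for $K$ itself and is forced to hold by passing to the partial derived subdivision $\beta_r K$.
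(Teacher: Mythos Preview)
Your argument is correct and reaches the conclusion by a genuinely different route. One small imprecision: you invoke $b(\lift{A})$ for the $r$-simplex $\lift{A}$ itself, but your equivariant choice of barycenters was made only for simplexes of dimension strictly greater than $r$. The fix is immediate for $r\geq 1$: pick any point of $relint(A)\subset M$; since $pr$ is injective on the relative interior of each simplex of $\lift{K}$, this point has a unique lift in $relint(\lift{A})$, and $g(\lift{A})=\lift{A}$ forces $g$ to fix that lift, giving the interior fixed point you need. (The case $r=0$ never arises, since $K$ is ideal and has no material vertices; your parenthetical ``material'' correctly flags this, and had $\lift{A}$ been an ideal vertex the fixed-point step would fail because a parabolic deck transformation can fix it.)

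The paper argues instead by direct geometric inspection. For each $r\in\{1,2,3\}$ it writes out the shape of a $3$-simplex $\sigma$ of $star(A,\beta_r K)$ (for instance $\sigma=A\star b(F)\star b(\Delta)$ when $r=1$) and observes that $\sigma\setminus\partial A$ is a convex subset of an embedded open ball in $M$, hence contractible. Contractibility forces the lifts of $\sigma\setminus\partial A$ to be disjoint; since all $3$-simplexes of $star(\lift{A},\beta_r\lift{K})$ meet in $\lift{A}$, only one of them can project to $\sigma$, and injectivity of $pr$ on that simplex then gives injectivity on the whole open star. Your approach trades this case-by-case contractibility check for the single combinatorial observation that $\lift{A}$ is the unique $r$-face-in-$\lift{K}$ of each $3$-simplex of the star, and then finishes with torsion-freeness of the deck group. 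Your route is more uniform in $r$ and makes the algebraic input explicit; the paper's is more hands-on but avoids invoking properties of $\Gamma$ beyond covering-space basics.
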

\begin{proof}
Let $\sigma$ be a 3-simplex in $star(A, \beta_r K)$. We will first show that $\sigma \setminus \del A$ is contractible in $M$. Let $\Delta$ be the $3$-simplex of $K$ which contains $\sigma$. When $r=3$ then $A$ is a tetrahedron, $\beta_3 K=K$ and $\sigma=A=star(A, \beta_3 K)$. So $\sigma \setminus \del A = int(A)$, which is contractible. When $r=2$, $A$ is a face and $\sigma = A \star b(\Delta) \subset \Delta$, where $b(\Delta)$ denotes a point in the interior of $\Delta$, as in Figure \ref{pder}(i). So $\sigma \setminus \del A$ is a convex subset of the embedded ball $int(\Delta) \cup relint(A)$ and is therefore contractible. (The notation $relint(A)$ denotes the relative interior of the simplex $A$). When $r=1$, $A$ is an edge and $\sigma=A \star b(F) \star b(\Delta)$ where $F$ is a face of $\Delta$ and $b(F)$ denotes a point in the interior of $F$, as in Figure \ref{pder}(ii). The boundary of $A$ is just the ideal vertices, so $\sigma\setminus \del A=\sigma$ is a convex subset of the embedded ball $int(\Delta) \cup  relint(F) \cup relint(A)$ so $\sigma\setminus \del A$ is contractible. \\

Let $\lift{A}$ be a lift of the $r$-simplex $A$ in $\lift{K}$. The ideal vertices of a simplex of $\lift{K}$ uniquely determine the simplex. And therefore $star(\lift{A}, \beta_r \lift{K})$ is a closed ball.

Suppose there exist points $\lift{q}_1$ and $\lift{q}_2$ in $instar(\lift{A}, \beta_r \lift{K})$ which project down to the same point $q$ in $M$. Assume that $q$ lies in a 3-simplex $\sigma$ of $star(A,\beta_r K)$. The boundary $\del \lift{A}$ is disjoint from $instar(\lift{A}, \beta_r K)$ so $\lift{q}_1$ and $\lift{q}_2$ do not lie in $\del \lift{A}$. The simplex $\sigma\setminus \del A$ is contractible in $M$ so lifts of $\sigma \setminus \del A$ are disjoint in $\H^3$. All $3$-simplexes of $star(\lift{A}, \beta_r\lift{K})$ have a common intersection at $\lift{A}$ so exactly one of the $3$-simplexes in $star(\lift{A}, \beta_r\lift{K})$ projects down to $\sigma$. Therefore $\lift{q}_1$ and $\lift{q}_2$ lie in the same 3-simplex $\lift{\sigma}$ of $star(\lift{A}, \beta_r \lift{K})$. But as $\sigma\setminus \del A$ is contractible so $pr: \lift{\sigma} \setminus \del \lift{A} \to \sigma\setminus \del A$ is injective and we have a contradiction.

\end{proof}

The following result follows from Lemma 3.4 of \cite{KalPha}. We make minor modifications to work with ideal (possibly non-simplicial) triangulations in dimension 3, so we give a complete proof:

\begin{lemma}\label{Lem3.5}
Let $K$ be a geometric ideal $\theta_0$-thick triangulation of $M$. Let $\alpha K$ be a geometric subdivision of $K$ such that for all $3$-simplexes $A$ of $K$, $\alpha A$ is shellable. Let $s_i$ be the number of $i$-simplexes of $\alpha K$ that lie in the $i$-skeleton of $K$. Then $\alpha K$ is related to the derived subdivision $\beta K$ by a sequence of at most $(4\pi/\theta_0) s_1 + 2s_2 + s_3$ Pachner moves.
\end{lemma}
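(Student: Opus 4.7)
The plan is to pass from $\alpha K$ to $\beta K$ by successively modifying the subdivision on the $k$-skeleton of $K$ for decreasing $k$, going through the intermediate partial derived subdivisions
$$\alpha K \;=\; \beta^\alpha_3 K \;\longrightarrow\; \beta^\alpha_2 K \;\longrightarrow\; \beta^\alpha_1 K \;\longrightarrow\; \beta^\alpha_0 K \;=\; \beta K.$$
The three transitions will cost $s_3$, $2s_2$, and $(4\pi/\theta_0)s_1$ Pachner moves respectively, summing to the stated bound.

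\emph{Step 1: from $\alpha K$ to $\beta^\alpha_2 K$.} For each $3$-simplex $A$ of $K$, the two subdivisions $\alpha A$ and $b(A)\star\alpha\partial A$ of the $3$-ball $A$ share the same boundary subdivision $\alpha\partial A$, so the moves stay inside $A$ and different $A$'s do not interfere. By hypothesis $\alpha A$ is shellable, and $b(A)$ lies in the interior of $A$, so Lemma~\ref{shellable} relates them in $|\alpha A|_3$ Pachner moves. Summing yields $\sum_A|\alpha A|_3=s_3$.

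\emph{Step 2: from $\beta^\alpha_2 K$ to $\beta^\alpha_1 K$.} For each $2$-simplex $F$ of $K$ with adjacent $3$-simplices $\Delta_1,\Delta_2$, consider the bipyramid $B_F=F\star\{b(\Delta_1),b(\Delta_2)\}$, a $3$-ball. In $\beta^\alpha_2 K$ it is triangulated as the suspension $\alpha F\star\{b(\Delta_1),b(\Delta_2)\}$, in $\beta^\alpha_1 K$ as $b(F)\star\alpha\partial F\star\{b(\Delta_1),b(\Delta_2)\}$; both share the boundary $\alpha\partial F\star\{b(\Delta_1),b(\Delta_2)\}$, so moves remain local to $B_F$. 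The key point is that the suspension of a shellable $2$-ball is a shellable $3$-ball: given a shelling $\sigma_1,\dots,\sigma_{n_F}$ of $\alpha F$, the interleaved ordering
$$\sigma_1\star b(\Delta_1),\; \sigma_1\star b(\Delta_2),\; \sigma_2\star b(\Delta_1),\; \sigma_2\star b(\Delta_2),\; \dots$$
is a shelling of $B_F$ (a direct check shows each added tetrahedron meets the union of its predecessors in a $2$-disk). Since $b(F)$ lies in the relative interior of $F$, hence in the interior of $B_F$, Lemma~\ref{shellable} applies with cone point $b(F)$ and produces the target subdivision in $2n_F$ Pachner moves. Summing over $F$ gives $2s_2$.

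\emph{Step 3: from $\beta^\alpha_1 K$ to $\beta K$.} For each edge $E$ of $K$, let $d_E$ be the number of $3$-simplices of $K$ containing $E$; the $\theta_0$-thickness of $K$ forces $d_E\leq 2\pi/\theta_0$ since the $d_E$ dihedral angles around $E$ sum to $2\pi$. For each $1$-simplex $e$ of $\alpha E$ (there are $n_E$ such), the local analogue of Step 2 is a ``book'' $B_e$ built from the $2d_E$ tetrahedra of $\beta^\alpha_1 K$ of the form $e\star b(F)\star b(\Delta)$ with $F\supset E$ a $2$-face of $K$ and $\Delta\supset F$ a $3$-simplex of $K$, arranged around $e$. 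By Lemma~\ref{starballs} we may work in the lift to $\H^3$ where $B_e$ is an embedded $3$-ball. A shelling argument analogous to Step 2, carried out in an order compatible with iteratively removing the interior vertices of $\alpha E$ to introduce the single new vertex $b(E)$, re-triangulates the neighborhood of each $e$ using at most $2d_E$ Pachner moves. Summing,
$$\sum_E 2d_E\,n_E \;\leq\; \frac{4\pi}{\theta_0}\sum_E n_E \;=\; \frac{4\pi}{\theta_0}\,s_1.$$

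\emph{Main obstacle.} Step~3 is the delicate one: unlike Step~2, the subdivision of $E$ changes its vertex set entirely, since all interior vertices of $\alpha E$ must disappear and the new vertex $b(E)$ be inserted. One must carefully sequence the Pachner moves in the book around $E$ so that processing one edge $e$ of $\alpha E$ does not spoil the setup for the next, and the thickness bound $d_E\leq 2\pi/\theta_0$ is precisely what keeps the book's combinatorial size bounded and produces the factor $\theta_0^{-1}$ in the final estimate.
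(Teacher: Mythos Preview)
Your overall strategy and the counts in each step match the paper exactly: the same chain $\alpha K=\beta^\alpha_3K\to\beta^\alpha_2K\to\beta^\alpha_1K\to\beta K$ with costs $s_3$, $2s_2$, $(4\pi/\theta_0)s_1$. Steps~1 and~2 are correct and essentially identical to the paper's argument.

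The gap is in Step~3. Your description processes each subedge $e$ of $\alpha E$ separately in its own ``book'' $B_e$, and then vaguely appeals to ``an order compatible with iteratively removing the interior vertices of $\alpha E$ to introduce the single new vertex $b(E)$''. But once you star $B_e$ from a point, you have introduced a new interior vertex in that book; processing the next subedge does not obviously remove it, and it is not clear how your procedure ever arrives at a configuration with the \emph{single} interior vertex $b(E)$ on $E$. You correctly identify this as the main obstacle but do not actually resolve it.

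The paper sidesteps this difficulty entirely. Instead of working subedge by subedge, it treats the whole star at once: $\alpha E \star lk(E,\beta_1 K)$ is the join of the $1$-ball $\alpha E$ with the $1$-sphere $lk(E,\beta_1 K)$, hence a $3$-ball, and it is shellable because joins of shellable complexes are shellable (both factors being $1$-dimensional, they are trivially shellable). A single application of Lemma~\ref{shellable} with cone point $b(E)$ then converts this ball directly to $b(E)\star\partial E\star lk(E,\beta_1 K)$, which is precisely the target. The number of tetrahedra in the join is $n_E\cdot|lk(E,\beta_1 K)|_1 = n_E\cdot 2d_E\le n_E\cdot(4\pi/\theta_0)$, and summing over edges $E$ of $K$ gives $(4\pi/\theta_0)s_1$ as you claimed. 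So your count is right, but the clean justification is one global starring per edge of $K$, not a sequence of local moves per subedge of $\alpha K$.
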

\begin{proof}
We shall obtain a sequence of Pachner moves $\alpha K=\beta_3^\alpha K \sim \beta_2^\alpha K \sim \beta_1^\alpha K \sim \beta_0^\alpha K = \beta K$.
Each step of this relation involves changing a subdivision of the star of an $r$-simplex $\delta$ of $\beta_r K$ to a cone over its boundary by Pachner moves. By a Pachner move in $star(\delta, \beta_r K)$ we in fact mean a Pachner move in the ball $star(\lift{\delta}, \beta_r \lift{K})$ which we then project down to $\beta_r K$. By Lemma \ref{starballs}, the projection from the interior of $star(\lift{\delta}, \beta_r \lift{K})$ to the interior of $star(\delta, \beta_r K)$ is an isometry and these Pachner moves do not change the boundary of $star(\lift{\delta}, \beta_r \lift{K})$. 

\emph{Step (i)} Let $A$ be a $3$-simplex of $K$. Fix a lift $\lift{A}$ of $A$ in $\lift{K}$. The subdivision of $A$, $\alpha A$, is given to be shellable, so $\alpha \lift{A}$ is shellable and by Lemma \ref{shellable} there exists a sequence of Pachner moves that changes $\alpha \lift{A}$ to $\lift{a} \star \del \alpha \lift{A}$ for a coning point $\lift{a}$ in the interior of $\lift{A}$. The covering projection $pr$ restricts to an isometry from $int(\lift{A})$ to $int(A)$ so we get a sequence of Pachner moves that star $\alpha A$.

Performing this starring operation on all $3$-simplexes $A$ of $K$ requires $s_3$ many Pachner moves. These moves change $\alpha K$ to $\beta^\alpha_2 K$, i.e., the $2$-skeleton of $K$ remains $\alpha$-subdivided while the $\alpha$-subdivision of the $3$-simplexes of $K$ is replaced by cones over their boundaries. Note that while the intermediate triangulations in this sequence are allowed to be non-geometric, $\beta_2^\alpha K$ is geometric again.

\emph{Step (ii)} Let $B$ be a 2-simplex of $K$ and let $star(B, K)=A_1 \cup A_2$, for $3$-simplexes $A_1$ and $A_2$ of $K$ (which may be equal). Let $\lift{B}$ be a lift of $B$ to $\lift{K}$ and let $star(\lift{B}, \lift{K})= \lift{A}_1 \cup \lift{A}_2$ which are distinct tetrahedra. If $a_1$ and $a_2$ are the coning points in the interior of $A_1$ and $A_2$ from Step (i), then $lk(\lift{B}, \beta_2 \lift{K})=\lift{a}_1 \cup \lift{a}_2$ where $\lift{a}_i$ is an interior point of $\lift{A}_i$.  The subdivision of any $2$-polytope is shellable so $\alpha B \star (a_1 \cup a_2)$ is also shellable. Applying Lemma \ref{shellable} again, we get a sequence of Pachner moves that change $\alpha \lift{B} \star (\lift{a}_1 \cup \lift{a}_2)$ to $\lift{b}\star \del \alpha \lift{B} \star (\lift{a}_1 \cup \lift{a}_2)$ for a point $\lift{b}$ in $int(\lift{B})$. Note that $\alpha \lift{B} \star lk(\lift{B}, \beta_2 \lift{K})$ is a subdivision of $star(\lift{B}, \beta_2 \lift{K})=\lift{B} \star lk(\lift{B}, \beta_2 \lift{K})$. By Lemma \ref{starballs} the projection map restricts to an isometry from $instar(\lift{B}, \beta_2 \lift{K}) \to instar(B, \beta_2 K)$. So we get a sequence of Pachner moves that changes $\alpha B \star lk(B, \beta_2 K)$ to $b \star \del \alpha B \star lk(B, \beta_2 K)$.

There are in total $2s_2$ many 3-simplexes in the union of all such $\alpha B \star (a_1 \cup a_2)$, so performing this starring operation on all $2$-simplexes $B$ of $K$ requires $2s_2$ many Pachner moves. These moves change $\beta^\alpha_2 K$ to $\beta^\alpha_1 K$, i.e., the $1$-skeleton of $K$ remains $\alpha$-subdivided while the $\alpha$-subdivisions of the $2$ and $3$ simplexes of $K$ in $\alpha K$ become cones over their boundaries. 

\emph{Step (iii)} Let $C$ be an edge of $K$ and let $star(\lift{C}, \lift{K})= \cup_{i=1}^n \lift{A}_i$ for $3$-simplexes $\lift{A}_i$ of $\lift{K}$ such that $\lift{B}_i=\lift{A}_i \cap \lift{A}_{i+1}$ is a $2$-simplex of $\lift{K}$.  If $\lift{a}_i\in int(\lift{A}_i)$ and $\lift{b}_i \in int(\lift{B}_i)$ are lifts of the corresponding coning points from Step (i) and Step (ii)  then the link of $\lift{C}$ in $\beta_1 \lift{K}$ is the circuit $(\lift{a}_1,\lift{b}_1, \lift{a}_2, \lift{b}_2, ..., \lift{a}_{n-1}, \lift{b}_{n-1}, \lift{a}_1)$ in the $1$-skeleton of $\beta_1 \lift{K}$. The join of shellable complexes is shellable so $\alpha \lift{C} \star lk(\lift{C}, \beta_1 \lift{K})$ is shellable as well. We proceed as before, starring this shellable complex using Lemma \ref{shellable} to change it to $\lift{c}\star \del \lift{C} \star lk(\lift{C}, \beta_1 \lift{K})$ for $\lift{c}$ an interior point of $\lift{C}$. Let $deg(\lift{C})$ denote the number of $3$-simplexes in $star(\lift{C}, \beta_1 \lift{K})$ and let $d=max(deg(\lift{C}))$ where the maximum is taken over all edges of $\lift{K}$. The lift $\lift{K}$ is $\theta_0$-thick, so the number of 3-simplexes $n$ in $star(\lift{C}, \lift{K})$ is at most $2\pi/\theta_0$. Therefore the number of edges in the circuit $lk(\lift{C}, \beta_1 \lift{K})=2n\leq 4\pi/\theta_0$, i.e., $d \leq 4\pi/\theta_0$. Note that $\alpha \lift{C} \star lk(\lift{C}, \beta_1 \lift{K})$ is a subdivision of $star(\lift{C}, \beta_1\lift{K})=\lift{C} \star lk(\lift{C}, \beta_1 \lift{K})$. By Lemma \ref{starballs} the projection map restricts to an isometry from $instar(\lift{C}, \beta_1 \lift{K}) \to instar(C, \beta_1 K)$. So we get a sequence of Pachner moves that changes $\alpha C \star lk(C, \beta_1 K)$ to $c \star \del \alpha C \star lk(C, \beta_1 K)$.

This starring operation for all edges $C$ of $K$ involves at most $ds_1 \leq (4\pi/\theta_0)s_1$ Pachner moves. And so making the corresponding Pachner moves for all edges $C$ of $K$ changes $\beta^\alpha_1 K$ to $\beta K$ in at most $s_1 (4\pi/\theta_0)$ moves.

We therefore obtain a sequence of Pachner moves that transform $\alpha K$ to $\beta K$ with length bounded by $s_3 + 2s_2 + (4\pi/\theta_0)s_1$ as required.
\end{proof}

\begin{lemma}\label{Pachnerlem} 
Let $K$ be a geometric ideal $\theta_0$-thick triangulation of $M$. Let $K'$ be a (possibly non-ideal) geometric subdivision of $K$. Let $p_i$ be the number of $i$-simplexes of $K$ for $i>0$. Let $s_i$ be the number of $i$-simplexes of $K'$ that lie in the $i$-skeleton of $K$. Then $\beta K'$ is related to $K$ by a sequence of at most $(8\pi/\theta_0) s_1 + 12s_2 + 24s_3 + (4\pi/\theta_0) p_1 + 2p_2 + p_3$ Pachner moves.
\end{lemma}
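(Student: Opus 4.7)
The plan is to chain two applications of Lemma~\ref{Lem3.5}. First I take $\alpha K = \beta K'$ and use Lemma~\ref{Lem3.5} to relate $\beta K'$ to the derived subdivision $\beta K$; second I take $\alpha$ to be the trivial subdivision ($\alpha K = K$) and use Lemma~\ref{Lem3.5} to relate $K$ to $\beta K$ (choosing the same interior coning points in both applications so that they land on the same target subdivision). Reversing the second sequence and concatenating with the first produces a sequence of Pachner moves from $\beta K'$ to $K$.

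For the first application I must verify that $\alpha A = (\beta K')|_A$ is shellable for every $3$-simplex $A$ of $K$. In the Klein projective model $A$ becomes a convex Euclidean tetrahedron and $K'|_A$ a Euclidean triangulation of it, so its derived subdivision is shellable by the Adiprasito--Benedetti theorem cited just before Lemma~\ref{shellable}. Next I count the $i$-simplexes of $\beta K'$ that lie in the $i$-skeleton of $K$: such a simplex corresponds to an ascending chain $\sigma_0 < \sigma_1 < \cdots < \sigma_i$ of simplexes of $K'$, and it lies in the $i$-skeleton of $K$ precisely when the top element $\sigma_i$ does, in which case all the smaller $\sigma_j \subset \sigma_i$ do automatically. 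For each such $\sigma_i$ (counted by $s_i$) there are $(i{+}1)!$ complete flags of proper faces, giving $(i{+}1)!\,s_i$ contributions in total. Lemma~\ref{Lem3.5} therefore produces at most
\[
\frac{4\pi}{\theta_0}(2 s_1) + 2(6 s_2) + 24\, s_3 \;=\; \frac{8\pi}{\theta_0} s_1 + 12 s_2 + 24 s_3
\]
Pachner moves from $\beta K'$ to $\beta K$.

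For the second application, with $\alpha$ the identity subdivision, every $\alpha A = A$ is a single tetrahedron and is trivially shellable, while the counts $s_i$ in Lemma~\ref{Lem3.5} become $p_i$. The lemma then supplies at most $(4\pi/\theta_0)\, p_1 + 2 p_2 + p_3$ Pachner moves between $K$ and $\beta K$. Concatenating the two bounds yields the stated total. The main substantive ingredient beyond Lemma~\ref{Lem3.5} itself is the Adiprasito--Benedetti shellability of a derived subdivision of a convex $3$-polytope; the remainder is the standard flag-count for the barycentric subdivision.
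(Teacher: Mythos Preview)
Your proof is correct and follows essentially the same approach as the paper: two applications of Lemma~\ref{Lem3.5}, the first with $\alpha K = \beta K'$ (shellability supplied by Adiprasito--Benedetti via the Klein model, and the $s_i$ replaced by $(i{+}1)!\,s_i$), the second with the trivial $\alpha$ (so $s_i = p_i$), then concatenation. Your explicit remark about choosing the same coning points so that both applications land on the \emph{same} derived subdivision $\beta K$ is a useful detail that the paper leaves implicit.
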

\begin{proof} 
We first bound the number of Pachner moves needed to transform $\beta K'$ to $\beta K$. Each $i$-simplex of $K'$ is split into $(i+1)!$ many $i$-simplexes on taking a derived subdivision.  The number of $i$-simplexes of $\beta K'$ in the $i$-skeleton of $K$ is therefore $(i+1)!s_i$. Denote $K'$ by $\alpha K$. Let $A$ be a simplex of $K$ and let $\alpha A$ be its $\alpha$-subdivision. Let $\alpha\lift{A}$ denote the lift of $\alpha A$ to the subdivision of the ideal simplex $\lift{A}$ in the Klein model of $\H^3$. Geodesics are straight lines in the Klein model so $\alpha\lift{A}$ is the subdivision of a Euclidean $3$-simplex in $\E^3$. By Theorem A of \cite{AdiBen}, its derived subdivision $\beta \alpha \lift{A}=\lift{\beta\alpha A}$ is shellable. Therefore  $(\beta \alpha)A$ is shellable for all $3$-simplexes $A$ of $K$. So replacing $s_i$ in Lemma \ref{Lem3.5} with $(i+1)!s_i$ we get the bound $(4\pi/\theta_0) (2s_1) + 2(6s_2) + (24s_3)$ on the number of Pachner moves needed to go from $\beta K'$ to $\beta K$.

A single 3-simplex is trivially shellable, so we next take $\alpha K = K$ and $s_i=p_i$ in Lemma \ref{Lem3.5}. This gives the bound $(4\pi/\theta_0) p_1 + 2p_2 + p_3$ on the number of Pachner moves needed to relate $K$ and $\beta K$.

Putting these sequences of Pachner moves together we get the required bound on the number of Pachner moves needed to go from $\beta K'$ to $K$.
\end{proof}

\begin{lemma}\label{polycount}
Let $K_1$ and $K_2$ be geometric ideal $\theta_0$-thick triangulations of $M$. Let $s$ be the number of $3$-simplexes of $K'=\beta(K_1 \cap K_2)$ and let $f$ be the number of $3$-polytopes in $K_1 \cap K_2$. Then $s \leq 112f$.
\end{lemma}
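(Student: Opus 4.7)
The plan is to bound, polytope by polytope, the number of top-dimensional simplexes in the derived subdivision $\beta P$ of each $3$-polytope $P$ in $K_1 \cap K_2$, and then sum these bounds. First I would observe that every such $P$ arises as an intersection $P = \Delta_1 \cap \Delta_2$ of a tetrahedron $\Delta_1 \in K_1$ with a tetrahedron $\Delta_2 \in K_2$; as the intersection of two convex subsets of $\H^3$, $P$ is itself a convex $3$-polytope. Next I would count the top simplexes of $\beta P$ via a flag enumeration: a $3$-simplex of $\beta P$ corresponds to a chain $v < e < F < P$ where $v, e, F$ are a vertex, an edge and a $2$-face of $P$. For each $2$-face $F$ containing $e(F)$ edges, there are $2\,e(F)$ such flags, and since every edge of $P$ lies in exactly two $2$-faces, $\sum_F e(F) = 2\,E(P)$. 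Hence $\beta P$ has exactly $4\,E(P)$ top-dimensional simplexes, where $E(P)$ denotes the number of edges of $P$.

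The heart of the argument is to show $E(P) \leq 28$. Every edge $e$ of $P$ is the intersection of two adjacent $2$-faces of $P$, and each $2$-face of $P$ lies in a $2$-face of $\Delta_1$ or $\Delta_2$. Since two adjacent $2$-faces of a convex polytope lie in distinct supporting planes, they cannot both be contained in the same $2$-face of $\Delta_i$. So every edge $e$ of $P$ falls into one of three classes: (a) contained in an edge of $\Delta_1$ (both adjacent $2$-faces of $P$ lie in distinct $2$-faces of $\Delta_1$), (b) contained in an edge of $\Delta_2$, or (c) contained in $G_1 \cap G_2$ for a pair of $2$-faces $G_i$ of $\Delta_i$. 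By convexity of $\Delta_2$, the intersection of any edge of $\Delta_1$ with $\Delta_2$ is a single (possibly empty) segment, so each of the $6$ edges of $\Delta_1$ contributes at most one edge of $P$ to class (a); symmetrically for class (b). Each of the $4 \cdot 4 = 16$ pairs $(G_1, G_2)$ meets in at most one line segment, giving at most $16$ edges to class (c). This yields $E(P) \leq 6 + 6 + 16 = 28$, so $\beta P$ contributes at most $4 \cdot 28 = 112$ top simplexes, and summing over the $f$ polytopes of $K_1 \cap K_2$ gives $s \leq 112 f$.

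The main obstacle I anticipate is the case analysis underlying the bound on $E(P)$: in particular, carefully justifying that adjacent $2$-faces of $P$ cannot lie in a common $2$-face of either $\Delta_i$, and confirming that convexity does force each of the potential intersections (edge with tetrahedron, $2$-face with $2$-face) to be a single connected piece rather than several. Once that case analysis is clean, both the flag count $4E(P)$ and the summation over polytopes are routine.
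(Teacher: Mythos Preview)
Your argument is correct and follows essentially the same route as the paper: bound the number of $3$-simplexes of $\beta P$ for each $3$-polytope $P$ of $K_1\cap K_2$ and sum. The paper phrases the per-polytope bound via faces rather than edges --- $F(P)\leq 8$ (one per face of $\Delta_1$ or $\Delta_2$), each face a polygon with at most $7$ edges hence $14$ triangles in $\beta$, so $\beta P$ has at most $14\cdot 8=112$ tetrahedra --- which is exactly your flag count $4E(P)$ together with $E(P)\leq 28$.
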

\begin{proof}
Let $\Delta_1$ and $\Delta_2$ be 3-simplexes of $K_1$ and $K_2$ respectively and let $P$ be a 3-polytopal component of $\Delta_1 \cap \Delta_2$. Let $F(P)$ denote the number of faces of $P$. Each face of $P$ is a subset of a unique face of $\Delta_1$ or $\Delta_2$ so $F(P) \leq 8$. Consequently each face has at most 7 edges and on taking derived subdivisions, each face splits into at most $14$ $2$-simplexes. Therefore $\beta P$ has at most $14 F(P) \leq 112$ many $3$-simplexes. Summing over all $3$-polytopes $P$ of $K_1 \cap K_2$ gives $s \leq 112 f$.
\end{proof}

We can now finally calculate an explicit bound on the number of Pachner moves needed to relate geometric ideal $\theta_0$-thick triangulations:
\begin{lemma}\label{mainlem1}
Let $M$ be a complete orientable cusped hyperbolic $3$-manifold. Let $\tau_1$ and $\tau_2$ be geometric ideal triangulations of $M$ with at most $m_1$ and $m_2$ many $3$-simplexes respectively and all dihedral angles at least  $\theta_0$. Let $m=m_1 + m_2$. Then the number of Pachner moves needed to relate $\tau_1$ and $\tau_2$ is less than 
$$N(m, \theta)=(10752+3584\pi/\theta_0)f + (5+8\pi/\theta_0)m$$ where,
\begin{align*}
f=&\left( \frac{4\pi v_{tet}}{\theta_0^2 (sinh(r_0) - r_0)}+1\right)m\\
r_0=& \arcsinh(\sinh(a_0/2)\sin\theta_0)\\
a_0=&\arcsinh(l_0\sin\theta_0/z_0)\\
z_0=&\sqrt{2m\,v_{tet}\cot\theta_0}/(\epsilon\sin\theta_0)\\
l_0=& (\sin\theta_0)^{4m}(\sqrt{m^2+2m}-m)/(4m)
\end{align*}
\end{lemma}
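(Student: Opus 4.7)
The plan is to construct a common geometric simplicial subdivision of $\tau_1$ and $\tau_2$, relate each $\tau_i$ to a derived subdivision of this common refinement via Pachner moves by invoking Lemma \ref{Pachnerlem}, and then concatenate the two sequences through this common intermediate triangulation.

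First, I would apply Theorem \ref{mainsubthm} to bound the number of $3$-polytopes in the polytopal complex $\tau_1 \cap \tau_2$ by $f$. Setting $K' = \beta(\tau_1 \cap \tau_2)$ produces a geometric simplicial triangulation that simultaneously refines both $\tau_1$ and $\tau_2$, and Lemma \ref{polycount} bounds its number $s$ of $3$-simplexes by $112 f$.

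Next, I would apply Lemma \ref{Pachnerlem} separately with $K = \tau_1$ and $K = \tau_2$, using the same $K'$ in both cases. This yields, for each $i \in \{1,2\}$, a sequence of at most $(8\pi/\theta_0) s_1^{(i)} + 12 s_2^{(i)} + 24 s_3^{(i)} + (4\pi/\theta_0) p_1^{(i)} + 2 p_2^{(i)} + p_3^{(i)}$ Pachner moves between $\tau_i$ and $\beta K'$, where $p_j^{(i)}$ is the number of $j$-simplexes of $\tau_i$ and $s_j^{(i)}$ is the number of $j$-simplexes of $K'$ lying in the $j$-skeleton of $\tau_i$. Concatenating these two sequences through $\beta K'$ produces a Pachner path from $\tau_1$ to $\tau_2$ of length at most the sum of the two bounds.

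The remaining step is the combinatorial bookkeeping to express this sum in the stated form. For $\tau_i$, since each face is shared by exactly two $3$-simplexes and each edge by at least two (using that $\tau_i$ is an ideal triangulation of a cusped manifold), I would use $p_3^{(i)} = m_i$, $p_2^{(i)} \leq 2 m_i$ and $p_1^{(i)} \leq 3 m_i$. Each $s_j^{(i)}$ is controlled by the total count of $j$-simplexes of $K'$, which is in turn linear in $s$ via the $4$ faces and $6$ edges of each of the $s$ tetrahedra of $K'$. Substituting $s \leq 112 f$ and summing over $i \in \{1,2\}$ then yields $N(m, \theta_0) = (10752 + 3584\pi/\theta_0) f + (5 + 8\pi/\theta_0) m$. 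Since the deep technical inputs are already packaged in Theorem \ref{mainsubthm}, Lemma \ref{polycount} and Lemma \ref{Pachnerlem}, the only mild obstacle I anticipate is tracking these combinatorial constants carefully so that the coefficients exactly match those appearing in the statement.
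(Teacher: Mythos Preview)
Your approach is essentially identical to the paper's: apply Theorem \ref{mainsubthm} to bound the polytopes in $\tau_1\cap\tau_2$, take $K'=\beta(\tau_1\cap\tau_2)$ as the common refinement, invoke Lemma \ref{Pachnerlem} once for each $\tau_i$, and concatenate through $\beta K'$, using Lemma \ref{polycount} for $s\le 112f$.

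One small correction in your bookkeeping: you write that each edge is shared by at least two tetrahedra, giving $p_1^{(i)}\le 3m_i$. In fact, in the lift $\lift{\tau}_i$ each edge lies in at least three tetrahedra (the link of an edge is a simplicial circle, hence has at least three edges), so $3p_1^{(i)}\le 6m_i$, i.e.\ $p_1^{(i)}\le 2m_i$. This sharper bound is what produces the coefficient $8\pi/\theta_0$ rather than $12\pi/\theta_0$ in the $m$-term. The same ``degree $\ge 3$'' argument applies to the edges of $K'$ lying in the $1$-skeleton, giving $s_1\le 2s_3$; together with $s_2\le 2s_3$ this yields exactly the coefficient $32\pi/\theta_0+96$ on $s$, and after multiplying by $112$ you recover $3584\pi/\theta_0+10752$. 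With that adjustment your outline matches the paper's proof.
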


\begin{proof}
Let $p_i$ and $q_i$ be the number of $i$-simplexes of $\tau_1$ and $\tau_2$ respectively. The polytopal complex $\tau_1 \cap \tau_2$ has $f$ many $3$-polytopes as given by Theorem \ref{mainsubthm}. Let $s_i$ be the the number of $i$-simplexes of $K'=\beta(\tau_1 \cap \tau_2)$ which lie in the $i$-skeleton of $\tau_1 \cap \tau_2$. So $s_i$ is greater than or equal to the number of $i$-simplexes of $K'$ which lie in the $i$-skeleton of $\tau_1$ and of $\tau_2$. Applying Lemma \ref{Pachnerlem} twice, we get a bound on the number of Pachner moves to relate $\tau_1$ and $\tau_2$ via $\beta K'$ as $(16\pi/\theta_0) s_1 + 24s_2 + 48s_3 + (4\pi/\theta_0) (p_1+q_1) + 2(p_2+q_2) + (p_3+q_3)$.

Each face of $\lift{\tau}_1$ lies in 2 tetrahedra and each tetrahedron has 4 faces so $4p_3$ counts each face of $\tau_1$ exactly twice, therefore $2p_2 =4p_3$. Similarly, as each edge of $\lift{\tau}_1$ lies in at least $3$ tetrahedra and each tetrahedron has $6$ edges so $3p_1 \leq 6p_3$. As $p_3=m_1$ so we get $p_1 \leq 2m_1$ and $p_2 = 2m_1$. Similar identities hold for $q_i$. And similarly, each face of $\lift{K}'$ lies in two tetrahedra of $\lift{K}'$ but some face of $K'$ may not lie in the 2-skeleton of $\tau_1 \cap \tau_2$ so we get $2s_2 \leq 4s_3$. Each edge of $\lift{K}'$ lies in at least $3$ tetrahedron of $\lift{K}'$ and as each tetrahedron of $K'$ has at most $6$ edges which lie in the $1$-skeleton of $\tau_1 \cap \tau_2$ so $3s_1 \leq 6s_3$.

Plugging in these values into the bound above gives us the bound
$((16\pi/\theta_0)2 +(24)2 + 48)s + ((4\pi/\theta_0)2 + (2)2 + 1)(m_1+m_2)$. Using the inequality $s \leq 112f$ obtained in Lemma \ref{polycount}, we get the bound $(10752+3584\pi/\theta_0)f + (5+8\pi/\theta_0)m$. Finally, we can plug in the value for $f$ from Theorem \ref{mainsubthm} to get the required bound.

\end{proof}

This bound in the above lemma can be simplified to highlight the dependence on $\theta$ and $m$ which proves the main result of this article:
\begin{proof}[Proof of Theorem \ref{mainthm1}]
This proof is just a simplification of the bound obtained in Lemma \ref{mainlem1}. By Lemma \ref{abound} and as $\sinh$ is an increasing function so,
$$\sinh(a_0/2) \sin\theta_0<\sinh(a_0)<1$$
From the calculations in proof of Theorem \ref{mainthm2} we get,
$$\sinh \left(\frac{a_0}{2}\right) \sin \theta_0 > \frac{\sqrt{3}}{4}\cdot \frac{\epsilon(\sin\theta_0)^{4m+5/2}}{4\sqrt{2mv_{tet}}}\cdot \frac{1}{2m}\cdot\sin\theta_0$$

As $t- \arcsinh(t) \geq \frac{t^3}{6} - \frac{3t^5}{40} \geq \frac{t^3}{12}$ for $t\leq 1$ so
putting $t=\sinh(a_0/2)\sin\theta_0$ and substituting the value $r_0=\arcsinh(\sinh(a_0/2)\sin\theta_0)$ we get

\begin{align*}
\sinh(r_0) - r_0 &= \left(\sinh \left(\frac{a_0}{2}\right) \sin \theta_0 - \arcsinh\left(\sinh \left(\frac{a_0}{2}\right) \sin \theta_0\right)\right)\\
    &\geq \left(\frac{\left(\sinh \left(\frac{a_0}{2}\right) \sin \theta_0\right)^3}{12} \right)
     \geq \frac{3 \sqrt{3} \, \epsilon^3}{2^{16}\sqrt{2}\cdot 12 \cdot  (v_{tet})^{3/2}}\frac{(\sin \theta_0)^{12m+21/2}}{m^{9/2}}\\
\end{align*}
This gives,
\begin{align*}
\frac{4\pi v_{tet}}{\theta_0^2 \, (\sinh(r_0)-r_0)} &\leq \frac{2^{18} \cdot 12  \sqrt{2}\pi (v_{tet})^{5/2}}{3\sqrt{3}\epsilon^3}\frac{m^{9/2}}{\theta_0^2 \, (\sin \theta_0)^{12m+21/2}}\\
     \frac{4\pi v_{tet}}{\theta_0^2 \, (\sinh(r_0)-r_0)} +1 &\leq \frac{2^{18} \cdot 12  \sqrt{2}\pi (v_{tet})^{5/2}m^{9/2}+3\sqrt{3}\epsilon^3\,\theta_0^2 \, (\sin \theta_0)^{12m+21/2}}{3\sqrt{3}\epsilon^3\,\theta_0^2 \, (\sin \theta_0)^{12m+21/2}}\\
\end{align*}
As $\theta_0 \leq \pi/3$, $\epsilon < 1$, $1<v_{tet}$ and $4\leq m$ so 
$$
3\sqrt{3}\epsilon^3 \theta_0^2 (\sin\theta_0)^{12m+21/2}<2^{18}\sqrt{2}\pi(4)^{9/2}<2^{18}\sqrt{2}\pi(v_{tet})^{5/2}m^{9/2}
$$
As $\epsilon\geq 0.29$ and $v_{tet} < 1.015$, so we have,
\begin{align*}
f(m, \theta_0)&=\left( \frac{4\pi v_{tet}}{\theta_0^2 \, (\sinh(r_0)-r_0)}+1\right)m \leq \frac{2^{18}\cdot 13\sqrt{2} \pi (v_{tet})^{5/2}m^{11/2}}{3\sqrt{3}\epsilon^3\,\theta_0^2 \, (\sin \theta_0)^{12m+21/2}}\\
&< \frac{2^{18}\sqrt{2}(335) m^{11/2}}{\theta_0^2 \, (\sin \theta_0)^{12m+21/2}}
\end{align*}
Let $N(m, \theta_0)$ be the number of Pachner moves required to relate $\tau_1$ and $\tau_2$.
\begin{align*}
    \text{So } N(m, \theta) &\leq \left(10752+\frac{3584\pi}{\theta_0}\right)f + \left(5+ \frac{8\pi}{\theta_0}\right)m\\
    &\leq \left(10752k+5 \right)m + \frac{(3584k+8)\pi}{\theta_0} m\\
    \text{ where } k&= \frac{2^{18}\sqrt{2} (335) m^{9/2}}{\theta_0^2 \, (\sin \theta_0)^{12m+21/2}} 
\end{align*}

\begin{align*}
10752k+5 &=  \frac{10752(335) \, 2^{18}\sqrt{2} \, m^{9/2}}{\theta_0^2 \, (\sin \theta_0)^{12m+21/2}} +5\\
    &=  \frac{10752(335) \, 2^{18}\sqrt{2} \, m^{9/2} + 5 \theta_0^2 \, (\sin \theta_0)^{12m+21/2}}{\theta_0^2 \, (\sin \theta_0)^{12m+21/2}}\\
    &\leq \frac{(1.33534\times10^{12}) \, m^{9/2}}{\theta_0^2 \, (\sin \theta_0)^{12m+21/2}} \text{  as $5\theta_0^2 (\sin\theta_0)^{12m+21/2} < 4^{9/2} \leq m^{9/2}$}
\end{align*}
Similarly,
\begin{align*}
   3584k+8 &\leq  \frac{(4.45111\times 10^{11}) \, m^{9/2}}{\theta_0^2 \, (\sin \theta_0)^{12m+21/2}} \text{ as $ 8\theta_0^2 (\sin\theta_0)^{12m+21/2}< 4^{9/2}\leq m^{9/2}$}
\end{align*}
As $1/t<1/\sin(t)$ for $t>0$ and as $\theta_0\leq \pi/3$, so from Lemma \ref{mainlem1} we get
\begin{align*}
N(m, \theta)
&\leq \frac{ m^{11/2}}{\theta_0^2 \, (\sin \theta_0)^{12m+21/2}}\left((1.33534\times10^{12})+\frac{(4.45111\times 10^{11})\pi}{\theta_0}\right)\\
&\leq \frac{ m^{11/2}}{(\sin\theta_0)^3 \, (\sin \theta_0)^{12m+21/2}}\left((1.33534\times10^{12})\left(\frac{\pi}{3}\right)+(4.45111\times 10^{11})\pi\right)\\
N(m, \theta) &< (2.797\times 10^{12})\frac{ m^{11/2}}{(\sin \theta_0)^{12m+27/2}} 
\end{align*}
\end{proof}

\begin{acknowledgements}
The first author was supported by the MATRICS grant of Science and Engineering Research Board, GoI. We would like to thank the referees for comments that have significantly improved the exposition.
\end{acknowledgements}

\bibliographystyle{alpha}
\bibliography{Revised}

\end{document}